\newtheorem{theorem}{Theorem}[section]
\theoremstyle{plain}
\newtheorem{corollary}[theorem]{Corollary}
\newtheorem{definition}[theorem]{Definition}
\newtheorem{example}[theorem]{Example}
\newtheorem{lemma}[theorem]{Lemma}
\newtheorem{proposition}[theorem]{Proposition}
\newtheorem{remark}[theorem]{Remark}
\numberwithin{equation}{section}
\def\supp{\operatorname{supp}}
\def\card{\operatorname{card}}
\begin{document}
\title[Zeta functions for meromorphic functions]{Zeta functions and Oscillatory Integrals for meromorphic functions}
\author{Willem Veys}
\address{University of Leuven, Department of Mathematics\\
Celestijnenlaan 200 B, B-3001 Leuven, Belgium}
\email{wim.veys@wis.kuleuven.be}
\thanks{The first author was
partially supported by the KU Leuven grant OT/11/069}
\author{W. A. Z\'{u}\~{n}iga-Galindo}
\address{Centro de Investigaci\'{o}n y de Estudios Avanzados, Departamento de Matem\'{a}ticas,
Unidad Quer\'etaro, Libramiento Norponiente \#2000, Fracc. Real de
Juriquilla. C.P. 76230, Quer\'etaro, QRO, M\'exico. }
  \email{wazuniga@math.cinvestav.edu.mx}
\thanks{The second author was partially supported by the ABACUS Project, EDOMEX-2011-C01-165873}
\subjclass[2000]{Primary 42B20, 11S40; Secondary 14B05, 14E15,
41A60, 11L07}
 \keywords{Oscillatory integrals, local zeta functions, asymptotic expansions, motivic zeta functions, exponential sums, meromorphic functions}

\begin{abstract}
In the 70's Igusa developed a uniform theory for local zeta
functions and oscillatory integrals attached to polynomials with
coefficients in a local field of characteristic zero. In the present
article this theory is extended to the case of rational functions,
or, more generally, meromorphic functions $f/g$, with coefficients
in a local field of characteristic zero. This generalization is far
from being straightforward due to the fact that several new
geometric phenomena appear. Also, the oscillatory integrals have two
different asymptotic expansions: the \lq usual\rq\ one when the norm
of the parameter tends to infinity, and another one when the norm of
the parameter tends to zero. The first asymptotic expansion is
controlled by the poles (with negative real parts) of all the
twisted local zeta functions associated to the meromorphic functions
$f/g - c$, for certain special values $c$. The second expansion is
controlled by the poles (with positive real parts) of all the
twisted local zeta functions associated to $f/g$.

\end{abstract}
\maketitle

\section{introduction}

In the present article we extend the theory of local zeta functions
and oscillatory integrals to the case of {\em meromorphic} functions
defined over local fields. In the classical setting, working on a
local field $K$ of characteristic zero, for instance on
$\mathbb{R}$, $\mathbb{C}$ or $\mathbb{Q}_{p}$, one considers a pair
$(h,\Phi)$, where $h:U\rightarrow K$ is a $K$-analytic function
defined on an open set $U \subset K^n$ and $\Phi$ is a test
function, compactly supported in $U$. One attaches to $(h,\Phi)$ the
local zeta function
\[
Z_{\Phi}(s;h):=\int\limits_{K^{n}\smallsetminus h^{-1}\left\{
0\right\} } \Phi\left( x\right) \left\vert h\left( x\right)
\right\vert _{K} ^{s}\left\vert dx\right\vert_K
\]
for $s\in \mathbb{C}$ with $\operatorname{Re}(s)>0$. Here
$\left\vert dx\right\vert_K$ denotes the Haar measure on $K^n$ and
$\left\vert a\right\vert_K$ the modulus of $a\in K$. More generally,
one considers the (twisted) local zeta function
\[
Z_{\Phi}(\omega;h):=\int\limits_{K^{n}\smallsetminus h^{-1}\left\{
0\right\} } \Phi\left( x\right) \omega \big(h(x)\big) \left\vert
dx\right\vert_K
\]
for a quasi-character $\omega$ of $K^\times$. By using an embedded
resolution of singularities of $h^{-1}\{ 0\}$, one shows that
$Z_{\Phi}(\omega;h)$ admits a meromorphic continuation to the whole
complex plane. In the Archime\-dean case the poles of
$Z_{\Phi}(\omega;h)$ are integer shifts of the roots of the
Bernstein-Sato polynomial of $h$, and hence induce eigenvalues of
the complex monodromy of $h$. In the $p$-adic case such a connection
has been established in several cases, but a general proof
constitutes one of the major challenges in this area.

In the Archimedean case $K=\mathbb{R}$ or $\mathbb{C}$, the study of
local zeta functions was initiated by I. M. Gel'fand and G. E.
Shilov \cite{G-S}. The meromorphic continuation of the local zeta
functions was established, independently, by M. Atiyah \cite{Atiyah}
and J. Bernstein \cite{Ber}, see also \cite[Theorem 5.5.1 and
Corollary 5.5.1]{I2}. On the other hand, in the middle 60's, A. Weil
initiated the study of local zeta functions, in the Archimedean and
non-Archimedean settings, in connection with the Poisson-Siegel
formula \cite{We0}. In the 70's, J.-I. Igusa developed (for
polynomials $h$) a uniform theory for local zeta functions over
local fields of characteristic zero \cite{I3}, \cite{I2}. The
$p$-adic local zeta functions are connected with the number of
solutions of polynomial congruences $\operatorname{mod} p^{m\text{
}}$and with exponential sums $\operatorname{mod}p^{m}$, see e.g.
\cite{D0}, \cite{I2}. More recently, J. Denef and F. Loeser
introduced in \cite{D-L} the motivic zeta functions which constitute
a vast generalization of the $p$-adic local zeta functions.

\bigskip

Fixing an additive character $\Psi:K\rightarrow\mathbb{C}$, the
oscillatory integral attached to $\left( h,\Phi\right) $ is defined
as
\[
E_{\Phi}\left( z;h\right) =\int\limits_{K^{n}\smallsetminus h^{-1}\{
0\}} \Phi\left( x\right) \Psi\big( zh( x)\big) \left\vert
dx\right\vert_K \qquad \text{ for }z\in K.
\]
A central mathematical problem consists in describing the asymptotic
behavior of $E_{\Phi}\left( z;h\right) $ as $\left\vert z\right\vert
_{K} \rightarrow\infty$. Under mild conditions, there exists an
asymptotic expansion of $E_{\Phi}\left( z;h\right) $ at infinity,
controlled by the poles of twisted local zeta functions. For
instance when $K=\mathbb{Q}_p$ we have the following \cite{I3}.
Assume that (the restriction to the support of $\Phi$ of) the
critical locus of $h$ is contained in $h^{-1}\{ 0\}$, and denote for
simplicity $|z|=|z|_{\mathbb{Q}_p}$. Then
\begin{equation}
E_\Phi\left( z;h\right) = \sum_\gamma \sum_{m=1}^{m_{\gamma}}
e_{\gamma,m}\left( \frac{z}{\left\vert z\right\vert }\right)
\left\vert z\right\vert^{\gamma}\left( \ln\left\vert z\right\vert
\right) ^{m-1} \label{intro1}
\end{equation}
 for sufficiently large $|z|$, where
$\gamma$ runs through all the poles$\mod 2\pi i / \ln p$ of
\linebreak $Z_\Phi\left(\omega;h\right)$ (for all quasi-characters
$\omega$), $m_\gamma$ is the order of $\gamma$, and $e_{\gamma,m}$
is a locally constant function on $\mathbb{Z}_p^\times$.

In this article we extend Igusa's theory for local zeta functions
and oscillatory integrals to the case in which the function $h$ is
replaced by a meromorphic function. (Actually, Igusa's theory is
developed in \cite{I3}, \cite{I2} for polynomials $h$, but it is
more generally valid for analytic functions, since the arguments are
locally analytic on a resolution space.) Besides independent
interest, there are other mathematical and physical motivations for
pursuing this line of research. S. Gusein-Zade, I. Luengo and A.
Melle-Hern\'andez have studied the complex monodromy (and A'Campo
zeta functions attached to it) of meromorphic functions, see e.g.
\cite{G-LM}, \cite{G-LM1}, \cite{G-LM2}. This work drives naturally
to ask about the existence of local zeta functions with poles
related with the monodromies studied by the mentioned authors.
 At an arithmetic level, we mention the special case of the oscillatory integrals associated to
$p$-adic Laurent polynomials, that are if fact exponential sums
$\operatorname{mod} p^{\ell}$. Estimates for exponential sums
attached to non-degenerate Laurent polynomials $\operatorname{mod}
p$ were obtained by A. Adolphson and S. Sperber \cite{A-S2} and J.
Denef and F. Loeser \cite{D-L-1}. Finally, the local zeta functions
attached to meromorphic functions are very alike to parametric
Feynman integrals and to $p$-adic string amplitudes, see e.g.
\cite{Be-Bro}, \cite{Bo-We}, \cite{B-F-O-W}, \cite{Marcolli}. For
instance in \cite[Section 3.15]{Marcolli}, M. Marcolli pointed out
explicitly that the motivic Igusa zeta function constructed by J.
Denef and F. Loeser may provide the right tool for a motivic
formulation of the dimensionally regularized parametric Feynman
integrals.

\bigskip
We now describe our results. In Section \ref{section zetafunctions},
we describe the meromorphic continuation of (twisted) local zeta
functions attached to meromorphic functions $f/g$, see Theorems
\ref{Theorem1} and \ref{Theorem2}. These results are related with
the work of F. Loeser in \cite{Lo2} for multivariable local zeta
functions. The local zeta functions attached to meromorphic
functions may have poles with positive and negative real parts. We
establish the existence of a band
$\beta<\operatorname{Re}(s)<\alpha$, with $\alpha \in
\mathbb{Q}_{>0} \cup \{+\infty\}$ and $\beta \in \mathbb{Q}_{<0}
\cup \{-\infty\}$, on which the local zeta functions are analytic,
and we show that $\alpha$, $\beta$ can be read of from an embedded
resolution of singularities of the divisor defined by $f$ and $g$,
see Theorem \ref{Theorem3}. In the case of meromorphic functions,
the problem of determining whether or not the corresponding local
zeta function has a pole is more complicated than in the function
case, see Example \ref{example}. We develop criteria for the
existence of poles, see Corollary \ref{coroll} and Lemma
\ref{Lemma7}.
 In Subsection \ref{motivic}, we treat briefly the
motivic and topological zeta functions for rational functions. These
invariants are connected with special cases of the general theory of
motivic integration of R. Cluckers and F. Loeser \cite{CL1}.


\smallskip

Sections \ref{section oscillatory} and \ref{section expansions}
constitute the core of the article. In Section \ref{section
oscillatory}, we review Igusa's method for estimating oscillatory
integrals for polynomials/holomorphic functions, and we present our
new strategy and several technical results in the case of
meromorphic functions, see Propositions \ref{propE0},
\ref{PropEinfty}, \ref{Prop1p} and \ref{Prop2p}.
 Then in Section \ref{section
expansions}, we prove our expansions and estimations for oscillatory
integrals associated to meromorphic functions. This is not a
straightforward matter, due to the fact that new geometric phenomena
occur in the meromorphic case, see Remarks \ref{zeta2},
\ref{conductor} and \ref{diffform}, and Definitions
\ref{specialpoints} and \ref{specialvalues}. In addition, there
exist two different asymptotic expansions: one when the parameter of
the oscillatory integral approaches the origin and another when this
parameter approaches infinity. The first asymptotic expansion is
controlled by the poles with positive real parts of all twisted
local zeta functions attached to the corresponding meromorphic
function, see Theorems \ref{Esmallz}, \ref{Esmallz-p} and
\ref{estimate0}. The second expansion is controlled by the poles
with negative real parts of all twisted local zeta functions
attached to the corresponding meromorphic function, see Theorems
\ref{thmEinfty}, \ref{thmEinfty-p} and \ref{estimateinfty}. As an
illustration, we mention here the form of the expansion at infinity
when $K=\mathbb{Q}_p$. Let now $h$ be a meromorphic function on $U$.
In Definition \ref{specialvalues} we associate a finite set
$\mathcal {V}$ of {\em special values} to $h$ and $\Phi$, including
the critical values of (a resolution of indeterminacies of) $h$ that
belong to the support of $\Phi$. Then
\begin{equation}
E_{\Phi}\left( z;h\right) = \sum_{c\in \mathcal {V}}
{\displaystyle\sum\limits_{\gamma_c<0}}\
{\displaystyle\sum\limits_{m=1}^{m_{\gamma_c}}}
e_{\gamma_c,m,c}\left( \frac{z}{\left\vert z\right\vert }\right)
\Psi(c\cdot z)
 \left\vert z\right\vert^{\gamma_c}\left( \ln \left\vert z\right\vert\right)^{m-1}
 \label{intro2}
\end{equation}
 for sufficiently large $\left\vert z\right\vert$,
where $\gamma_c$ runs trough all the poles$\mod 2\pi\sqrt{-1}/\ln p$
with negative real part of $Z_{\Phi}\left( \omega;h-c\right) $ (for
all quasi-characters $\omega$), $m_{\gamma_c}$ is the order of
$\gamma_c$, and $e_{\gamma_c,m,c}$ is a locally constant function on
$\mathbb{Z}_p^\times$. Consider for example $h=(x^2+x^3-y^2)/x^2$.
Then $1$ is the only special value (it is not a critical value), and
Theorem \ref{thmEinfty-p} yields the term $\Psi( z) \left\vert
z\right\vert^{-5/2}$ in the expansion, see Examples
\ref{examplecritical} and \ref{examplecritical2}.

Note that the expansion (\ref{intro1}) is simpler, mainly due to
restricting the support of $\Phi$. In the context of meromorphic
functions however, one typically has expansions as in
(\ref{intro2}), even when the support of $\Phi$ is arbitrarily
small!

\smallskip
In \cite{CL}, see also \cite{CGH}, R. Cluckers and F. Loeser
obtained similar expansions to those presented in Theorem
\ref{thmEinfty-p}, for $p$ big enough, in a more general
non-Archimedean setting, but without information on the powers of
$|z|_{K}$ and $\ln|z|_{K}$. We also note that the existence of two
types of asymptotic expansions for $p$-adic oscillatory integrals
attached to Laurent polynomials, satisfying certain very restrictive
conditions, was established by E. Le\'on-Cardenal and the second
author in \cite{L-Z}.



 \bigskip

Acknowledgement. We thank Raf Cluckers, Arno Kuijlaars, Adam
Parusinski, Michel Raibaut and Orlando Villamayor for helpful
conversations.

\section{preliminaries}\label{section preliminaries}

\subsection{The group of quasicharacters of a local field}

We take $K$ to be a non-discrete$\ $locally compact field of
characteristic zero. Then $K$ is $\mathbb{R}$, $\mathbb{C}$, or a
finite extension of $\mathbb{Q}_{p}$, the field of $p$-adic numbers.
If $K$ is $\mathbb{R}$ or $\mathbb{C}$, we say that $K$ is an
$\mathbb{R}$\textit{-field}, otherwise we say that $K$ is a
$p$\textit{-field}.

For $a\in K$, we define the \textit{modulus} $\left\vert a\right\vert _{K}$ of
$a$ by%
\[
\left\vert a\right\vert _{K}=\left\{
\begin{array}
[c]{l}%
\text{the rate of change of the Haar measure in }(K,+)\text{ under
}x\rightarrow ax\text{ }\\
\text{for }a\neq0,\\
\\
0\ \text{ for }a=0\text{.}%
\end{array}
\right.
\]

\noindent
 It is well-known that, if $K$ is an $\mathbb{R}$-field, then
$\left\vert a\right\vert _{\mathbb{R}}=\left\vert a\right\vert $ and
$\left\vert a\right\vert _{\mathbb{C}}=\left\vert a\right\vert
^{2}$, where $\left\vert \cdot\right\vert $ denotes the usual
absolute value in $\mathbb{R}$ or $\mathbb{C}$, and, if $K$ is a
$p$-field, then $\left\vert \cdot\right\vert _{K}$ is the normalized
absolute value in $K$.

A quasicharacter of $K^{\times}$ is a continuous group homomorphism
from $K^{\times}$ into $\mathbb{C}^{\times}$. The set of
quasicharacters forms a topological Abelian group denoted by
$\Omega\left( K^{\times}\right) $. The connected component of the
identity consists of the $\omega_{s}$, $s\in\mathbb{C}$, defined by
$\omega_{s}\left( z\right) =\left\vert z\right\vert _{K}^{s}$ for
$z\in K^{\times}$.

\medskip
We now take $K$ to be a $p$-field. Let $R_{K}$\ be the valuation
ring of $K$, $P_{K}$ the maximal ideal of $R_{K}$, and
$\overline{K}=R_{K}/P_{K}$ \ the residue field of $K$. The
cardinality of
the residue field of $K$ is denoted by $q$, thus $\overline{K}=\mathbb{F}_{q}%
$. For $z\in K$, $ord\left( z\right) \in\mathbb{Z}\cup\{+\infty\}$ \
denotes the valuation of $z$, and $\left\vert z\right\vert
_{K}=q^{-ord\left( z\right) }$. We fix a uniformizing parameter
$\mathfrak{p}$ of $R_{K}$. If $z\in K^{\times}$, then $ac$
$z=z\mathfrak{p}^{-ord(z)}$ denotes the angular component of $z$.

Given $\omega\in \Omega\left( K^{\times}\right) $, we choose
$s\in\mathbb{C}$\ satisfying $\omega\left( \mathfrak{p}\right)
=q^{-s}$. Then $\omega$ can be described as follows: $\omega\left(
z\right) =\omega_{s}\left( z\right) \chi\left( ac\text{ }z\right) $
for $z\in K^{\times}$, in which $\chi:=\omega\mid _{R_{K}^{\times}}$
is a character of $R_{K}^{\times}$. Furthermore, $\Omega\left(
K^{\times}\right) $ is a one dimensional complex manifold,
isomorphic to $(\mathbb{C}\text{ mod }(2\pi i/\ln
q)\mathbb{Z})\times\left( R_{K}^{\times}\right) ^{\ast}$, where
$\left( R_{K}^{\times}\right) ^{\ast}$ is the group of characters of
$R_{K}^{\times}$.

Next we take $K$ to be an $\mathbb{R}$-field. Now for $z\in
K^{\times}$ we denote $ac$ $z=\frac{z}{\left\vert z\right\vert }$.
Then $\omega \in \Omega\left( K^{\times}\right) $ can again be
described as $\omega(z)=\omega_{s}\left( z\right) \chi\left(
ac\text{ }z\right) $ for $z\in K^{\times}$, where $\chi$ is a
character of $\{z \in K^{\times} \mid |z|=1\}$. Concretely in this
case
$\chi=\chi_l=(\cdot)^l$,
 in which
 $l\in \{0,1\}$ or $l\in\mathbb{Z}$, according
as $K=\mathbb{R}$ or $K=\mathbb{C}$. In addition, $\Omega\left(
K^{\times}\right) $ is a one dimensional complex manifold, which is
isomorphic to
$\mathbb{C}\times\left(  \mathbb{Z}/2\mathbb{Z}\right)  $ or $\mathbb{C}%
\times\mathbb{Z}$, according as $K$ is $\mathbb{R}$ or $\mathbb{C}$.

\medskip
For arbitrary $K$, we will denote the decompositions above as
$\omega =\omega_{s}\chi\left( ac\right) \in\Omega\left(
K^{\times}\right)$. We have that $\sigma\left( \omega\right)
:=\operatorname{Re}(s)$ depends only on $\omega$, and $\left\vert
\omega\left( z\right) \right\vert =\omega_{\sigma\left(
\omega\right) }\left( z\right) $. We define for all $\beta <\alpha$
in $\mathbb{R}\cup \{-\infty,+\infty\}$ an open subset of
$\Omega\left( K^{\times}\right) $ by
\[
\Omega_{\left( \beta,\alpha\right) }\left( K^{\times}\right)
=\left\{ \omega\in\Omega\left( K^{\times}\right) \mid\sigma\left(
\omega\right) \in\left( \beta,\alpha\right) \right\} .
\]
For further details we refer the reader to \cite{I2}.

\subsection{Local zeta functions for meromorphic
functions}\label{notations}

If $K$ is a $p$-field, resp. an $\mathbb{R}$-field, we denote by
$\mathcal{D}(K^{n})$ the $\mathbb{C}$-vector space consisting of all
$\mathbb{C}$-valued locally constant functions, resp. all smooth
functions, on $K^n$, with compact support. An element of
$\mathcal{D}(K^{n})$ is called a \textit{test function}. To simplify
terminology, we will call a non-zero test function that takes only
real and non-negative values a \textit{positive}\ test function.

Let $f,g:U\rightarrow K$ be non-zero $K$-analytic functions defined
on an open $U$ in $K^{n}$, such that $f/g$ is not constant.
Let $\Phi:U\rightarrow \mathbb{C}$ be a test function with support
in $U$. Then the local zeta function attached to $\left(
\omega,f/g,\Phi\right) $ is defined as
\begin{equation}
Z_{\Phi}(\omega;f/g)=\int\limits_{U\smallsetminus D_{K}}\Phi\left(
x\right) \omega\left( \frac{f\left( x\right) }{g\left( x\right)
}\right) |dx|_{K}, \label{zeta}
\end{equation}
where $D_{K}=f^{-1}\left\{ 0\right\} \cup g^{-1}\left\{ 0\right\} $
and $|dx|_{K}$ is the normalized Haar measure on $K^{n}$.

\begin{remark}\rm
(1) The convergence of the integral in (\ref{zeta}) is not a
straightforward matter; in particular the convergence does not
follow from the fact that $\Phi$\ has compact support.

(2) When considering only polynomials $f$ and $g$, it would be
natural to assume that $f$ and $g$ are coprime in the polynomial
ring $K[x_1,\dots,x_n]$. In that case the set $D_K$ only depends on
$f/g$. For more general $K$-analytic functions however, the set
$D_K$ depends in fact on the chosen $f$ and $g$ to represent the
meromorphic function $f/g$. But this will not really affect our
methods and results. Note that the zeta function
$Z_{\Phi}(\omega;f/g)$ does depend only on the quotient $f/g$.
\end{remark}

\subsection{Ordinary and adapted embedded resolutions}

We state two versions of embedded resolution of $D_{K}$
 that we will use in this paper.

\begin{theorem}
\label{thresolsing} 
 Let $U$ be an open subset
 of $K^{n}$. Let
$f,g$ be $K$-analytic functions on $U$ as in Subsection
\ref{notations}.

\smallskip
 (1) Then there exists an embedded resolution
$\sigma:X_{K}\rightarrow U$ of $D_{K}$, that is,

\noindent(i) $X_{K}$ is an $n$-dimensional $K$-analytic manifold,
$\sigma$ is a proper $K$-analytic map which is locally a composition
of a finite number of blow-ups at closed submanifolds, and which is
an isomorphism outside of
 $\sigma^{-1}(D_{K})$;

\noindent(ii) $\sigma^{-1}\left( D_{K}\right) =\cup_{i\in T}E_{i}$,
where the $E_{i}$\ are closed submanifolds of $X_{K}$ of codimension
one, each equipped with a pair of nonnegative integers $\left(
N_{f,i},N_{g,i}\right) $ and a positive integer $v_i$,
 satisfying the following. At every  point $b$ of $X_{K}$ there
exist local coordinates $\left( y_{1},\ldots,y_{n}\right) $ on
$X_{K}$ around $b$ such that, if $E_{1},\ldots,E_{r}$ are the
$E_{i}$ containing $b$, we have on some open neighborhood $V$ of $b$
that $E_{i}$ is given by $y_{i}=0$ for $i\in\{1,\ldots,r\}$,
\begin{equation}
\sigma^{\ast}\left( dx_{1}\wedge\ldots\wedge dx_{n}\right)
=\eta\left( y\right) \left(
\prod_{i=1}^r
y_{i}^{v_{i}-1}\right)  dy_{1}\wedge\ldots\wedge dy_{n}, \label{for2}%
\end{equation}
and
\begin{align}
f^{\ast}(y) & :=\left( f\circ\sigma\right) \left( y\right)
=\varepsilon _{f}\left( y\right) \prod_{i=1}^r
y_{i}^{N_{f,i}},\label{for3}\\
g^{\ast}(y) & :=\left( g\circ\sigma\right) \left( y\right)
=\varepsilon _{g}\left( y\right) \prod_{i=1}^r
y_{i}^{N_{g,i}}, \label{for4}%
\end{align}
where $\eta\left( y\right), \varepsilon_{f}\left( y\right),
\varepsilon_{g}\left(y\right)$ belong to
$\mathcal{O}_{X_{K},b}^{\times}$, the group of units of the local
ring of $X_{K}$\ at $b$.

\smallskip
(2) Furthermore, we can construct such an embedded resolution
$\sigma:X_{K}\rightarrow U$ of $D_{K}$ satisfying the following
additional property at every point $b$ of $X_K$:

\noindent (iii) with the notation of (\ref{for3}) and (\ref{for4}),
either $f^{\ast}(y)$ divides $g^{\ast}(y)$ in
$\mathcal{O}_{X_{K},b}$ (what is equivalent to $N_{f,i} \leq
N_{g,i}$ for all $i=1,\ldots,r$), or conversely $g^{\ast}(y)$
divides $f^{\ast}(y)$.
\end{theorem}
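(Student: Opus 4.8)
The plan for part (1) is to reduce to the classical embedded resolution of a hypersurface. Apply Hironaka's embedded resolution of singularities --- available over any field of characteristic zero, and in particular in the $K$-analytic category for the fields $K$ considered in Section~\ref{section preliminaries} --- to the hypersurface $(fg)^{-1}\{0\}=D_{K}$. This directly yields (i) and the fact that $\sigma^{-1}(D_{K})=\bigcup_{i\in T}E_{i}$ is a simple normal crossings divisor with smooth codimension-one components $E_{i}$. Fix a point $b$ and local coordinates $(y_{1},\dots,y_{n})$ at $b$ in which the components through $b$ are $E_{i}=\{y_{i}=0\}$, $i=1,\dots,r$; then $(fg)\circ\sigma=\varepsilon\prod_{i=1}^{r}y_{i}^{a_{i}}$ with $\varepsilon$ a unit of $\mathcal{O}_{X_{K},b}$. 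Since a regular local ring is a UFD, any factor of a unit times a monomial is again a unit times a monomial, so $f\circ\sigma=\varepsilon_{f}\prod_{i=1}^{r}y_{i}^{N_{f,i}}$ and $g\circ\sigma=\varepsilon_{g}\prod_{i=1}^{r}y_{i}^{N_{g,i}}$ with $N_{f,i},N_{g,i}\in\mathbb{Z}_{\geq 0}$ and $\varepsilon_{f},\varepsilon_{g}$ units, which is (\ref{for3})--(\ref{for4}). Finally (\ref{for2}), with $v_{i}$ a positive integer, follows from the standard bookkeeping of the relative canonical divisor: $\sigma$ is locally a composition of blow-ups at smooth centers, the order of vanishing of $\sigma^{*}(dx_{1}\wedge\ldots\wedge dx_{n})$ along $E_{i}$ is a well-defined nonnegative integer $v_{i}-1$ which only increases (and is strictly positive on each new exceptional divisor) under subsequent blow-ups, while along a component of the strict transform of $D_{K}$ it is $0$, i.e.\ $v_{i}=1$. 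I would also record here the elementary computation behind the parenthetical assertion in (iii): in the coordinates above, $(g\circ\sigma)/(f\circ\sigma)=(\varepsilon_{g}/\varepsilon_{f})\prod_{i=1}^{r}y_{i}^{N_{g,i}-N_{f,i}}$ lies in $\mathcal{O}_{X_{K},b}$ if and only if $N_{f,i}\leq N_{g,i}$ for all $i$, and comparing orders of vanishing along each $E_{i}$ gives the converse; hence ``$f^{*}$ divides $g^{*}$ in $\mathcal{O}_{X_{K},b}$'' is equivalent to ``$N_{f,i}\leq N_{g,i}$ for $i=1,\dots,r$''.

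For part (2), I would start from a resolution $\sigma_{1}\colon X_{1}\to U$ as in (1) and consider the ideal sheaf $\mathcal{J}=(f\circ\sigma_{1})\mathcal{O}_{X_{1}}+(g\circ\sigma_{1})\mathcal{O}_{X_{1}}$. By (\ref{for3})--(\ref{for4}), at every point of $X_{1}$ this is a \emph{monomial} ideal with respect to the simple normal crossings boundary $\sigma_{1}^{-1}(D_{K})$, namely $\mathcal{J}=\bigl(\prod_{i=1}^{r}y_{i}^{N_{f,i}},\ \prod_{i=1}^{r}y_{i}^{N_{g,i}}\bigr)$ up to units. The plan is to \emph{principalize} $\mathcal{J}$ by the combinatorial (toric) resolution of monomial ideals: there is a proper morphism $\tau\colon X_{2}\to X_{1}$, a finite composition of blow-ups whose centers are irreducible components of intersections $\bigcap_{i\in S}E_{i}$ of boundary divisors, such that $\mathcal{J}\cdot\mathcal{O}_{X_{2}}$ is invertible and, at every point of $X_{2}$, is a monomial in a system of local coordinates for which the total transform of $D_{K}$ is again simple normal crossings. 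Since every center of $\tau$ is a stratum of the existing normal crossings boundary, $\sigma:=\sigma_{1}\circ\tau\colon X_{2}\to U$ still satisfies (i) and (ii); the numerical data $(N_{f,i},N_{g,i},v_{i})$ of the new resolution are again nonnegative integers with $v_{i}>0$ by the same discrepancy bookkeeping as in part (1).

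It then remains to deduce (iii) at an arbitrary point $b$ of $X_{2}$. Write $f^{*}=f\circ\sigma=\varepsilon_{f}\mathfrak{m}_{f}$ and $g^{*}=g\circ\sigma=\varepsilon_{g}\mathfrak{m}_{g}$ with $\mathfrak{m}_{f},\mathfrak{m}_{g}$ monomials in the boundary coordinates at $b$ and $\varepsilon_{f},\varepsilon_{g}$ units; by construction $(\mathfrak{m}_{f},\mathfrak{m}_{g})=\mathcal{J}\cdot\mathcal{O}_{X_{2},b}$ is a principal ideal. The key elementary fact is: a monomial ideal of a regular local ring generated by two monomials is principal if and only if one of the two monomials divides the other. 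Indeed, writing $(\mathfrak{m}_{f},\mathfrak{m}_{g})=(h)$ and factoring out the monomial $\mathfrak{d}=\gcd(\mathfrak{m}_{f},\mathfrak{m}_{g})$, one reduces to the case $\gcd(\mathfrak{m}_{f},\mathfrak{m}_{g})=1$, where a B\'ezout identity $1=a\mathfrak{m}_{f}+b\mathfrak{m}_{g}$ in the local ring forces $\mathfrak{m}_{f}$ or $\mathfrak{m}_{g}$ to be a unit, i.e.\ to equal $1$. Hence at $b$ either $\mathfrak{m}_{f}\mid\mathfrak{m}_{g}$ or $\mathfrak{m}_{g}\mid\mathfrak{m}_{f}$, and by the equivalence recorded in part (1) this is precisely the statement that $f^{*}$ divides $g^{*}$ in $\mathcal{O}_{X_{2},b}$ or conversely.

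The step I expect to be the main obstacle is not this final combinatorics but the assertion that $\mathcal{J}$ can be principalized \emph{without destroying the normal crossings structure of the total transform of $D_{K}$}, and globally over $X_{1}$ rather than only chart by chart. This is exactly what forces one to keep the centers of $\tau$ among the strata of the boundary and to exploit the monomial (toric) nature of $\mathcal{J}$ --- or, alternatively, to invoke a canonical (functorial) principalization algorithm applied to the already-monomial ideal $\mathcal{J}$ --- and it is also the point at which one must verify that the local combinatorial modifications are governed by a well-defined global invariant and therefore glue.
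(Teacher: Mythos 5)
Your part (1) is exactly the paper's argument (Hironaka applied to $(fg)^{-1}\{0\}$, plus the UFD splitting of the pulled-back monomial and the standard discrepancy bookkeeping), so there is nothing to compare there. For part (2) you take a genuinely different route. The paper first resolves the indeterminacies of $f/g$ viewed as a map $U\dashrightarrow\mathbb{P}^1$ (i.e.\ it principalizes the ideal $(f,g)$ on $U$ by a second appeal to Hironaka) and only then computes an embedded resolution of the exceptional locus together with the strict transform of $D_K$; property (iii) then follows because $(f/g)\circ\sigma$ is everywhere defined as a map to $\mathbb{P}^1$, so at each point either $f^\ast/g^\ast$ or $g^\ast/f^\ast$ is regular. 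You reverse the order: do the embedded resolution first, observe that $\mathcal{J}=(f^\ast)+(g^\ast)=\mathcal{O}(-\sum_i N_{f,i}E_i)+\mathcal{O}(-\sum_i N_{g,i}E_i)$ is then a sum of two invertible monomial ideals supported on the SNC boundary, and principalize it by purely combinatorial blow-ups at boundary strata; (iii) then follows from your (correct) lemma that a two-generator monomial ideal of a regular local ring is principal iff one generator divides the other. Both reductions of (iii) to local principality of $(f^\ast,g^\ast)$ are the same fact in two languages. What your route buys is that the second stage needs no general resolution machinery, only toric combinatorics governed by the global data $(N_{f,i},N_{g,i})_{i\in T}$; what it costs is precisely the step you flag yourself, namely a termination argument for the combinatorial principalization (e.g.\ the Euclidean-algorithm-style induction on $\max_{i,j}$ of the sign-incompatible differences $N_{f,i}-N_{g,i}$ over intersecting pairs of components, or an appeal to functorial principalization of the already-monomial $\mathcal{J}$). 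That step is standard and does glue, since the invariant is defined from the global multiplicities attached to the $E_i$, but it should be either carried out or explicitly cited; with that supplied, your proof is complete. One further small point worth keeping in mind (it does not affect correctness of either argument): in the general analytic setting the resolution is only locally a finite composition of blow-ups (Remark \ref{convention}), so both your $\tau$ and the paper's second resolution step should be understood in that locally finite sense.
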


\begin{proof} Part (1) is one of the standard formulations of
embedded resolution. It follows from Hironaka's work \cite{H}. See
also \cite[Section 8]{BEV},\cite[Section
5]{E-N-V},\cite{W1},\cite{W2} for more details, and especially
\cite[Theorem 2.2]{D-vdD} for the $p$-field case.

%

One  obtains a resolution as in part (2) by first
resolving the indeterminacies of $f/g$, considered as map from $U$
to the projective line, and then further computing an embedded resolution of the union of the exceptional locus and the strict transform of $D_K$.
\end{proof}

\begin{remark}\label{convention}\rm (1) When $f$ and $g$ are polynomials, the map $\sigma$
is a composition of a finite number of blow-ups. But in the more
general analytic setting, it is possible that one needs infinitely
many blow-ups, and hence that $T$ is infinite. Consider for example
the case $U=\{(a,b)\in \mathbb{R}^2 \mid a\neq 0\}$, $f=(y-1)(y -
\sin 1/x)$ and $g=1$. The curve defined by $f$ has infinitely many
isolated singular points, even contained in a bounded part of $U$.

There are even such examples where the multiplicity of the singular
points is not bounded. Consider the analytic function
 $a(x)=\prod_{j=1}^\infty jx \sin \frac
1{jx}$ on $\mathbb{R}_{>0}$, with zeroes at $\frac 1{m\pi}, m \in
\mathbb{Z}_{>0}$. For instance $\frac 1{m!\pi}$ is a zero of
multiplicity $m$. Then the curve defined by $a(x)-a(y)$ on
$U=\mathbb{R}_{>0}^2$ has infinitely many singular points contained
in a bounded part of $U$, where moreover the multiplicities of those
singular points are not bounded.

\smallskip
(2) However, the construction of $\sigma$ is locally finite in the
following sense. For any compact set $\mathcal{C}\subset U$, there
is an open neighborhood $U_\mathcal{C} \supset \mathcal{C}$ such
that the restriction of $\sigma$ to $\sigma^{-1}(U_\mathcal{C})$ is
a composition of a finite number of blow-ups. We refer to \cite{W2}
for more details. In order to handle this situation we note that in
our setting

(i) the objects of study associated to $f$ and $g$, zeta functions
and oscillatory integrals, depend only on the values of $f$ and $g$
on a small neighborhood of the (compact) support of the test
function $\Phi$; and

(ii) most of our invariants and estimates depend on $\Phi$.

\smallskip
\noindent {\sc Convention.} We consider here simply the compact set
$\mathcal{C}:=\supp \Phi$ and we fix an appropriate open $U_\Phi :=
U_\mathcal{C}$ such that the restriction of $\sigma$ to
$\sigma^{-1}(U_\Phi)$ is a composition of a finite number of
blow-ups.

\end{remark}

\begin{definition} \rm
In the sequel we will call an embedded resolution $\sigma$ as in
part (2) of Theorem \ref{thresolsing} an {\em adapted embedded
resolution} of $D_{K}$.
\end{definition}

\begin{remark}\rm
An adapted embedded resolution yields in general much more
components $E_i, i\in T,$ than an \lq economic\rq\ standard embedded
resolution. But it will be a crucial tool to derive the results of
Sections \ref{section oscillatory} and \ref{section expansions}.
\end{remark}

\begin{remark}\label{units}\rm
In Theorem \ref{thresolsing}, we note that $f^{\ast}/g^{\ast}$,
considered as a map to $K$, is defined in the points $b$ of
$\sigma^{-1}\left( D_{K}\right)$ satisfying $N_{f,i} \geq N_{g,i}$
for $i=1,\dots,r$. In particular there can exist points
$b\in\sigma^{-1}\left( D_{K}\right)$ such that
$\frac{f^{\ast}(y)}{g^{\ast}(y)}$ is a unit in the local ring at $b$
(this happens when $N_{f,i}=N_{g,i}$ for $i=1,\dots,r$). Moreover,
in that case the degree of the first non-constant term in the Taylor
expansion at $b$ can be larger than $1$. See Example
\ref{examplecritical1}.

This will
be an important new feature when studying zeta functions and
oscillatory integrals of meromorphic functions $f/g$, compared to
analytic functions $f$.
\end{remark}

\begin{definition}\label{defalphabeta}\rm
Let $\sigma:X_{K}\to U$ be an embedded resolution of $D_{K}$ as in
Theorem \ref{thresolsing} and Remark \ref{convention}. For $i\in T$,
we denote $N_i= N_{f,i} - N_{g,i}$ and call $(N_i,v_i)$ the datum of
$E_i$.

(1) We put $T_+ = \{i\in T\mid N_i>0 \text{ and } E_i \cap
\sigma^{-1}(\supp \Phi) \neq \emptyset\}$ and $T_- = \{i\in T\mid
N_i<0 \text{ and } E_i \cap \sigma^{-1}(\supp \Phi) \neq
\emptyset\}$, and we define
\[
\alpha=\alpha_\Phi\left( \sigma,D_{K}\right) =\left\{
\begin{array}
[c]{ll}%
\min_{i\in T_{-}}\left\{ \frac{v_{i}}{|N_{i}|}\right\} & \text{if
}T_{-}
\neq\emptyset\\
& \\
+\infty & \text{if }T_{-}=\emptyset,
\end{array}
\right.
\]
and
\[
\beta=\beta_\Phi\left( \sigma,D_{K}\right) =\left\{
\begin{array}
[c]{ll}%
\max_{i\in T_{+}}\left\{ -\frac{v_{i}}{N_{i}}\right\} & \text{if
}T_{+}
\neq\emptyset\\
& \\
-\infty & \text{if }T_{+}=\emptyset.
\end{array}
\right.
\]

(2) Whenever $T$ is finite, in particular when $f$ and $g$ are
polynomials, we can remove the condition $E_i \cap \sigma^{-1}(\supp
\Phi) \neq \emptyset$ in the definition of $T_{+}$ and $T_{-}$, and
then we obtain global invariants $\alpha$ and $\beta$ not depending
on $\Phi$.
\end{definition}


\section{Convergence, meromorphic continuation and poles of local zeta
functions}\label{section zetafunctions}

\subsection{Zeta functions over $p$-fields}\label{subsection zetap}

Before treating general zeta functions for meromorphic functions, it
is useful to recall the following basic computation.

\begin{lemma}
[Lemma 8.2.1 in \cite{I2}]\label{lemma3} Let $K$ be a $p$-field.
Take $a\in K$, $\omega=\omega_s\chi(ac) \in \Omega\left(
K^{\times}\right) $ and $N\in\mathbb{Z}$.
 Take also $n\in \mathbb{Z}_{>0}$ and $e\in\mathbb{Z}_{\geq 0}$. Then
\[
{\displaystyle\int\limits_{(a+\mathfrak{p}^{e}R_{K})\smallsetminus\left\{
0\right\} }} \omega\left( z\right) ^{N}\left\vert z\right\vert
_{K}^{n-1}\left\vert dz\right\vert =\left\{
\begin{array}
[c]{llll}%
\left( 1-q^{-1}\right) \frac{q^{-en-eNs}}{1-q^{-n-Ns}} & & \text{if}
&
\begin{array}
[c]{c}%
a\in\mathfrak{p}^{e}R_{K}\\
\chi^{N}=1
\end{array}
\\
&  &  & \\
q^{-e}\omega\left( a\right) ^{N}\left\vert a\right\vert _{K}^{n-1} &
& \text{if} &
\begin{array}
[c]{c}%
a\notin\mathfrak{p}^{e}R_{K}\\
\chi^{N}\mid_{U^{\prime}}=1
\end{array}
\\
&  &  & \\
0 & & &\text{all other cases,}
\end{array}
\right.
\]
in which $U^{\prime}=1+\mathfrak{p}^{e}a^{-1}R_{K}$. In the first
case, the integral converges on $\operatorname{Re}(s)>-\frac{n}{N}$,
if $N>0$, and on $\operatorname{Re}(s)<\frac{n}{\left\vert
N\right\vert }$, if $N<0$. Note that for $N=0$ we obtain a non-zero
constant.
\end{lemma}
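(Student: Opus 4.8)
The strategy is to reduce the integral to a geometric series via a partition of the domain of integration according to the valuation of $z$, treating separately the three cases dictated by the position of $a$ relative to $\mathfrak{p}^e R_K$. First I would observe that on $(a+\mathfrak{p}^e R_K)\smallsetminus\{0\}$ the integrand $\omega(z)^N|z|_K^{n-1}$ depends on $z$ only through $\operatorname{ord}(z)$ and $ac\, z$, so the natural thing is to decompose the domain into the level sets where $\operatorname{ord}(z)=k$ is constant. On each such level set, $|z|_K^{n-1}=q^{-k(n-1)}$ and $\omega(z)^N=q^{-kNs}\chi(ac\,z)^N$, so the contribution factorises into $q^{-k(n-1)}\cdot q^{-kNs}$ times an integral of $\chi(ac\,z)^N$ over the relevant piece of $\{\operatorname{ord}(z)=k\}$.

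\textbf{Case $a\in\mathfrak{p}^e R_K$.} Then $a+\mathfrak{p}^e R_K=\mathfrak{p}^e R_K$, and I would write $\mathfrak{p}^e R_K\smallsetminus\{0\}=\bigsqcup_{k\geq e}(\mathfrak{p}^k R_K\smallsetminus\mathfrak{p}^{k+1}R_K)$. On the shell $\mathfrak{p}^k R_K\smallsetminus\mathfrak{p}^{k+1}R_K$ one has $\operatorname{ord}(z)=k$, the measure of the shell is $q^{-k}(1-q^{-1})$, and $ac\,z$ ranges uniformly over $R_K^\times$; hence $\int_{\operatorname{ord}(z)=k}\chi(ac\,z)^N|dz| = q^{-k}\int_{R_K^\times}\chi(u)^N|du|$, which by orthogonality of characters equals $q^{-k}(1-q^{-1})$ if $\chi^N=1$ on $R_K^\times$ and $0$ otherwise. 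Summing the geometric series $\sum_{k\geq e}q^{-k(n-1)}q^{-kNs}q^{-k}(1-q^{-1}) = (1-q^{-1})\sum_{k\geq e}q^{-k(n+Ns)}$ gives $(1-q^{-1})q^{-e(n+Ns)}/(1-q^{-n-Ns})$, and this geometric series converges precisely when $|q^{-n-Ns}|<1$, i.e. $\operatorname{Re}(n+Ns)>0$, which is $\operatorname{Re}(s)>-n/N$ for $N>0$ and $\operatorname{Re}(s)<n/|N|$ for $N<0$. When $N=0$ the sum is $(1-q^{-1})\sum_{k\geq e}q^{-kn}=q^{-en}$, a non-zero constant, provided $\chi^0=1$ holds trivially.

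\textbf{Cases $a\notin\mathfrak{p}^e R_K$.} Here $\operatorname{ord}(a)<e$, so for every $z\in a+\mathfrak{p}^e R_K$ one has $\operatorname{ord}(z)=\operatorname{ord}(a)$ and in fact $|z|_K=|a|_K$ is constant on the whole domain; moreover $ac\,z$ ranges over the coset $U' = 1+\mathfrak{p}^e a^{-1}R_K$ scaled by $ac\,a$. Thus the integral becomes $\omega(a)^N|a|_K^{n-1}$ times $\int_{U'}\chi(u)^N|du|$ (after the change of variables $z=au$), and $\int_{U'}\chi(u)^N|du|$ equals $\operatorname{meas}(U')=q^{-e}$ if $\chi^N|_{U'}=1$ and $0$ otherwise, again by character orthogonality on the subgroup $U'$. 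This yields the stated value $q^{-e}\omega(a)^N|a|_K^{n-1}$ and the zero value in the remaining cases, with no convergence issue since the domain has finite measure and the integrand is bounded.

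\textbf{Main obstacle.} The computation is essentially routine; the only points requiring care are the bookkeeping of the angular-component measure — verifying that $ac\,z$ is equidistributed over $R_K^\times$ on each valuation shell in the first case, and over the multiplicative coset $U'$ in the second — and the precise identification of the half-plane of convergence of the geometric series with the inequalities $\operatorname{Re}(s)>-n/N$ and $\operatorname{Re}(s)<n/|N|$. I would also double-check the edge behaviour: that $U'$ is genuinely a subgroup of $R_K^\times$ (which needs $\operatorname{ord}(a)<e$, equivalently $\mathfrak{p}^e a^{-1}\subset P_K$) so that the orthogonality relation applies, and that the $N=0$ subcase is correctly absorbed into the first formula.
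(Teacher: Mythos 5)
Your proof is correct and is essentially the standard argument (the paper gives no proof of this lemma, citing Igusa's Lemma 8.2.1; that proof is exactly your shell decomposition by valuation plus character orthogonality, and the two convergence half-planes come out of the geometric series precisely as you say). Two small bookkeeping points to repair in the write-up: the measure of $U'=1+\mathfrak{p}^{e}a^{-1}R_{K}$ is $q^{-(e-\operatorname{ord}(a))}=q^{-e}\left\vert a\right\vert_{K}^{-1}$, not $q^{-e}$ --- this is exactly compensated by the Jacobian factor $\left\vert a\right\vert_{K}$ from $\left\vert dz\right\vert=\left\vert a\right\vert_{K}\left\vert du\right\vert$ that you dropped, so your final value $q^{-e}\omega(a)^{N}\left\vert a\right\vert_{K}^{n-1}$ is right but the intermediate identities as written are not; and for $N=0$ the first-case sum is $(1-q^{-1})q^{-en}/(1-q^{-n})$ rather than $q^{-en}$, which is still the required non-zero constant.
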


\begin{theorem}
\label{Theorem1} Assume that $K$ is a $p$-field. We consider $f,g$
as in Section \ref{section preliminaries}, and we fix an embedded
resolution $\sigma$ for $D_K$ as in Theorem \ref{thresolsing}, for
which we also use the notation of Definition \ref{defalphabeta}.
Then the following assertions hold:

\smallskip
\noindent(1) $Z_{\Phi}\left( \omega;f/g\right) $ converges for
$\omega\in$ $\Omega_{\left( \beta,\alpha\right) }\left(
K^{\times}\right) $;

\noindent(2) $Z_{\Phi}\left( \omega;f/g\right) $ has a meromorphic
continuation to $\Omega\left( K^{\times}\right) $ as a rational
function of $\omega\left(\mathfrak{p}\right)=q^{-s} $, and its poles
are of the form
\[
s=- \frac{v_{i}}{N_{i}}+\frac{2\pi\sqrt{-1}}{N_{i}\ln q}k, \ k\in
\mathbb{Z},
\]
for $i\in T_+\cup T_-$.
 In addition, the order of any
pole is at most $n$.
\end{theorem}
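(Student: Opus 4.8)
The plan is to pull back the integral defining $Z_{\Phi}(\omega;f/g)$ via the embedded resolution $\sigma$ and reduce everything to the monomial computation of Lemma \ref{lemma3}. First I would use a partition of unity subordinate to a finite cover of $\sigma^{-1}(\supp\Phi)$ by the coordinate charts $V$ provided by Theorem \ref{thresolsing}; this is legitimate because, by the Convention in Remark \ref{convention}, the restriction of $\sigma$ over a neighborhood $U_\Phi$ of $\supp\Phi$ is a composition of finitely many blow-ups, so $\sigma^{-1}(\supp\Phi)$ is compact and finitely many charts suffice. In each chart, writing $x=\sigma(y)$, the change-of-variables formula together with (\ref{for2}), (\ref{for3}), (\ref{for4}) turns the integrand into
\[
(\Phi\circ\sigma)(y)\,\omega\!\left(\frac{\varepsilon_f(y)}{\varepsilon_g(y)}\right)\prod_{i=1}^{r}\omega(y_i)^{N_i}\;|\eta(y)|_K\prod_{i=1}^{r}|y_i|_K^{v_i-1}\,|dy|_K,
\]
so the problem localizes to a product of one-variable integrals in the $y_i$ over the remaining variables, modulo the units $\varepsilon_f,\varepsilon_g,\eta$ and the locally constant test function.

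The next step is to handle the units. Since $K$ is a $p$-field, $\varepsilon_f/\varepsilon_g$, $\eta$, and $\Phi\circ\sigma$ are all locally constant (the units because their image lies in $R_K^\times$, on whose cosets $\omega$ and $|\cdot|_K$ are constant, after possibly shrinking $V$), so on a small enough polydisc the integrand becomes $c\cdot\prod_i \omega(y_i)^{N_i}|y_i|_K^{v_i-1}$ times the product measure, with $c$ a constant. Then Fubini reduces the chart contribution to a finite product, over $i$ with $E_i$ through $b$, of integrals of the type $\int_{a_i+\mathfrak{p}^{e_i}R_K}\omega(z)^{N_i}|z|_K^{v_i-1}|dz|$ treated in Lemma \ref{lemma3}, times elementary integrals in the non-exceptional coordinates. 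For convergence, Lemma \ref{lemma3} gives that the factor for an index $i\in T_-$ (where $N_i<0$) converges on $\operatorname{Re}(s)<v_i/|N_i|$ and the factor for $i\in T_+$ (where $N_i>0$) on $\operatorname{Re}(s)>-v_i/N_i$; indices with $N_i=0$ contribute a nonzero constant and no condition. Intersecting over the finitely many charts yields convergence on $\beta<\operatorname{Re}(s)<\alpha$, i.e. for $\omega\in\Omega_{(\beta,\alpha)}(K^\times)$, which is assertion (1).

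For assertion (2), I note that in the first case of Lemma \ref{lemma3} the value is the rational function $(1-q^{-1})q^{-e_i v_i - e_i N_i s}/(1-q^{-v_i-N_i s})$ of $q^{-s}$, whose poles occur exactly where $q^{-v_i-N_i s}=1$, that is at $s=-v_i/N_i + (2\pi\sqrt{-1}/(N_i\ln q))k$; the second and third cases contribute no poles. Summing the finitely many chart contributions (a finite sum of rational functions in $q^{-s}$, each with poles only of the stated form and each arising from at most $n$ exceptional coordinates through a point, hence of order at most $n$) gives the meromorphic continuation to all of $\Omega(K^\times)$ and the stated list of candidate poles with order $\le n$. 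The main obstacle, and the point requiring care rather than routine bookkeeping, is the very first reduction: justifying the change of variables and the partition of unity for a $K$-analytic (not merely polynomial) map whose resolution is only \emph{locally} finite — this is exactly where the Convention fixing $U_\Phi$ and the compactness of $\supp\Phi$ do the work, ensuring all sums and products that appear are finite so that "rational function of $q^{-s}$" is a meaningful conclusion.
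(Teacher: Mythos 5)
Your proof is correct, but it is not the route the paper's official proof takes. The paper deduces Theorem \ref{Theorem1} from Loeser's meromorphic continuation of the two-variable zeta function $Z_{\Phi}(\omega_1,\omega_2;f,g)$ of \cite[Th\'eor\`eme 1.1.4]{Lo2}: that result gives the explicit denominator $\prod_i\bigl(1-q^{v_i+N_{f,i}s_1+N_{g,i}s_2}\bigr)$, and specializing $\omega_1=\omega$, $\omega_2=\omega^{-1}$ (so $s_1=s$, $s_2=-s$) yields both the continuation and the candidate poles $s=-v_i/N_i+2\pi\sqrt{-1}\,k/(N_i\ln q)$. Your argument is instead the direct adaptation of Igusa's classical method: pull back via $\sigma$, decompose the compact set $\sigma^{-1}(\supp\Phi)$ into finitely many polydiscs on which the integrand is monomial times a constant (for a $p$-field one takes a \emph{disjoint} decomposition into polydiscs rather than a genuine partition of unity, but that is only a matter of phrasing), and apply Lemma \ref{lemma3} factor by factor. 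This is exactly the alternative the paper itself sketches in Remark \ref{altern}, and all the delicate points you flag — the local finiteness handled by the Convention in Remark \ref{convention}, the possibility that all $N_i=0$ while some $v_i>1$, the fact that the $N_i=0$ factors contribute nonzero constants and no convergence condition — are handled correctly. What each approach buys: the citation of \cite{Lo2} is shorter and places the result in a multivariable framework, while your direct computation is self-contained and is in fact the version of the argument the paper needs later (it is precisely the local monomial form (\ref{localcontrib}), together with the refinement of Remark \ref{remarkpoles-p} on which characters $\chi$ can actually produce a pole, that drives Sections \ref{section oscillatory} and \ref{section expansions}). No gap.
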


\begin{proof}
These are more or less immediate consequences of the work of Loeser
on multivariable zeta functions \cite{Lo2}. In his setting $f$ and
$g$ are polynomials, but his arguments are also valid for
holomorphic functions. Take $\omega_1,\omega_2\in\Omega\left(
K^{\times}\right)$. Then, by \cite[Th\'eor\`eme 1.1.4]{Lo2}, the
integral
\begin{equation}
Z_{\Phi}(\omega_1,\omega_2;f,g):=\int\limits_{U\smallsetminus
D_{K}}\Phi\left( x\right) \omega_1(f\left( x\right))\, \omega_2
(g\left( x\right))\,
  |dx|_{K}, \label{zeta2}
\end{equation}
obviously converging when $\sigma(\omega_1)$ and $\sigma(\omega_2)$
are positive, has a meromorphic continuation to $\Omega\left(
K^{\times}\right)\times \Omega\left( K^{\times}\right)$ as a
rational function of $\omega_1\left(\mathfrak{p}\right)=q^{-s_1}$
and $\omega_2\left(\mathfrak{p}\right)=q^{-s_2}$, with more
precisely
\[
\prod_{i}(1-q^{v_i+N_{f,i}s_1+N_{g,i}s_2})
\]
as denominator; here $i$ runs over the $i\in T$ such that $E_i \cap
\sigma^{-1}(\supp \Phi) \neq \emptyset$. Hence the real parts of the
poles of $Z_{\Phi}(\omega_1,\omega_2;f,g)$ belong to the union of
the lines $v_i+N_{f,i}\sigma_1+N_{g,i}\sigma_2=0$. Taking
$\omega_1=\omega$ and $\omega_2=\omega^{-1}$ (and hence $s_1=s$ and
$s_2=-s$), this specializes to the stated results about
$Z_{\Phi}\left( \omega;f/g\right) $.
\end{proof}

\begin{remark}\label{altern}\rm
Alternatively, it is straightforward to adapt Igusa's proof in the
classical case ($g=1$) directly to our situation. As a preparation
for Sections \ref{section oscillatory} and \ref{section expansions},
we recall briefly the main idea.

(1) We pull back the integral $Z_{\Phi}\left( \omega;f/g\right) $ to
an integral over $X_K\smallsetminus \sigma^{-1}(D_{K})$ via the
resolution $\sigma$, and compute it by subdividing the (compact)
integration domain $\sigma^{-1}\left( \text{supp}\, \Phi \right)
\subset X_{K}$ in a finite disjoint union of compact open sets on
which the integrand becomes \lq monomial\rq\ in local coordinates.
More precisely, using the notation of Theorem \ref{thresolsing}, we
can assume that such an integration domain around a point $b\in X_K$
is of the form $B=c+(\mathfrak{p}^{e}R_{K})^{n}$ in the local
coordinates $y_1,\ldots,y_n$, and that $\vert\eta\left(
y\right)\vert_K$, $\vert\varepsilon_{f}\left( y\right)\vert_K$,
$\vert\varepsilon_{g}\left(y\right)\vert_K$ are constant on $B$.
Then the contribution of $B$ to $Z_{\Phi}\left( \omega;f/g\right) $
is a non-zero constant times
\begin{equation}\label{localcontrib}
 \prod_{i=1}^r \int_{c_i+\mathfrak{p}^{e}R_{K}\setminus \{y_i=0\}}
\omega^{N_{i}}\left(y_{i}\right) \left\vert
y_{i}\right\vert_K^{v_{i} -1}\left\vert dy_i\right\vert _{K} .
\end{equation}
 Finally one concludes by using the local computation of Lemma \ref{lemma3}.

We want to stress the new feature mentioned in Remark \ref{units}:
in (\ref{localcontrib}), it is possible that all $N_i=0$, while some
$v_i>1$.

(2) Note that one needs an argument as in (1) to see that the
defining integral of $Z_{\Phi}\left( \omega;f/g\right) $ converges
at least somewhere.

(3) If $\sigma$ is an adapted embedded resolution, we have
around $b\in\sigma^{-1}(D_K)$ that $N_1,\dots,N_r$ are either all
non-positive or all non-negative in (\ref{localcontrib}).
\end{remark}

\begin{remark}\label{remarkpoles-p} \rm There is a refinement concerning the list of candidate
poles in Theo\-rem \ref{Theorem1} and Remark \ref{altern}. Writing
$\omega= \omega_s\chi(ac)$, we have, by Lemma \ref{lemma3}, that $-
\frac{v_{i}}{N_{i}}$ can be the real part of a pole of the
corresponding integral in (\ref{localcontrib}) only if the order of
$\chi$ divides $N_i$. Hence, in Theorem \ref{Theorem1}(2) the poles
are subject to the additional restriction that the order of $\chi$
divides $N_i$.

For later use in Sections \ref{section oscillatory} and \ref{section
expansions}, we stress the following special case. When $N_i=1$ in
(\ref{localcontrib}), the corresponding integral has no pole unless
$\chi$ is trivial. Then, considering in Theorem \ref{Theorem1} the
case that $g=1$ and $f^{-1}\{0\} \cap \supp \Phi$ has no singular
points, we have that $Z_{\Phi}\left( \omega_s\chi(ac);f\right) $ has
no poles unless $\chi$ is trivial, in which case its poles are of
the form $s=- 1+\frac{2\pi\sqrt{-1}}{\ln q}k, \ k\in \mathbb{Z},$
and of order $1$.
\end{remark}

\subsection{Zeta functions over $\mathbb{R}$-fields}
The strategy being analogous as for $p$-fields, we provide less
details.

\begin{theorem}
\label{Theorem2}Assume that $K$ is an $\mathbb{R}$-field. We
consider $f,g$ as in Section \ref{section preliminaries}, and we fix
an embedded resolution $\sigma$ for $D_K$ as in Theorem
\ref{thresolsing}, for which we also use the notation of Definition
\ref{defalphabeta}. Then the following assertions hold:

\smallskip
\noindent(1) $Z_{\Phi}\left( \omega;f/g\right) $ converges for
$\omega\in$ $\Omega_{\left( \beta,\alpha\right) }\left(
K^{\times}\right) $;

\noindent(2) $Z_{\Phi}\left( \omega,f/g\right) $ has a meromorphic
continuation to $\Omega\left( K^{\times}\right) $, and its poles are
of the form
\[
s= -\frac{v_{i}}{N_{i}}-\frac{k}{[K:\mathbb{R}]N_{i}}, \quad k\in
\mathbb{Z}_{\geq 0},
\]
for $i\in T_+\cup T_-$. In addition, the order of any pole is at
most $n$.
\end{theorem}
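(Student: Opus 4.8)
The plan is to mirror the proof of Theorem \ref{Theorem1}, replacing the $p$-adic local computation by its Archimedean counterpart. First I would recall the basic one-variable Archimedean integral: for $K=\mathbb{R}$ or $\mathbb{C}$, a quasicharacter $\omega=\omega_s\chi_l(ac)$, an integer $N$, and integers $v\geq 1$, $e\geq 0$, the integral
\[
\int_{|y|\leq e}\omega(y)^N |y|_K^{v-1}\,|dy|_K
\]
converges for $\operatorname{Re}(s)$ in an appropriate half-plane ($\operatorname{Re}(s)>-v/N$ if $N>0$, $\operatorname{Re}(s)<v/|N|$ if $N<0$, all $s$ if $N=0$) and extends meromorphically to $\mathbb{C}$ with poles only at the points $s=-v/N-k/([K:\mathbb{R}]N)$, $k\in\mathbb{Z}_{\geq 0}$, of order $1$. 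This is the classical computation behind Igusa's Archimedean theory (e.g.\ \cite[Chapter 5]{I2}); the factor $[K:\mathbb{R}]$ appears because $|y|_{\mathbb{C}}=|y|^2$, so that $\Gamma$-factors contribute poles spaced by $1/2$ rather than $1$, and the dependence on $\chi_l$ only shifts which of these half-integers actually occur.

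The second step is the pullback and localization argument already sketched in Remark \ref{altern}, which is valid verbatim over $\mathbb{R}$-fields: pull $Z_\Phi(\omega;f/g)$ back along the embedded resolution $\sigma$, use a partition of unity subordinate to a finite cover of the compact set $\sigma^{-1}(\supp\Phi)$ by coordinate charts of the form guaranteed by Theorem \ref{thresolsing}, and on each chart reduce the integrand to monomial form. In each chart the contribution is (a smooth compactly supported amplitude times) a product $\prod_{i=1}^r \omega(y_i)^{N_i}|y_i|_K^{v_i-1}$; after integrating out the non-critical directions and expanding the amplitude in a Taylor series in the $y_i$ with $N_i\neq 0$ (estimating the remainder to get convergence/holomorphy beyond the naive region), each such chart contributes finitely many poles from the list in the one-variable lemma applied with exponents $(N_i,v_i+(\text{Taylor degree}))$. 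Taking $\omega_1=\omega$, $\omega_2=\omega^{-1}$ as in the proof of Theorem \ref{Theorem1} (or equivalently working directly with $N_i=N_{f,i}-N_{g,i}$), convergence for $\omega\in\Omega_{(\beta,\alpha)}(K^\times)$ follows from the half-plane conditions together with Definition \ref{defalphabeta}, and the meromorphic continuation with the asserted pole locations follows by summing the finitely many chart contributions. The bound $n$ on the pole order is immediate since each chart has at most $n$ monomial factors, so at most $n$ simple poles can collide.

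The main obstacle, as usual in the Archimedean case, is the analytic bookkeeping that replaces the clean geometric series of the $p$-adic setting: one must justify that the chart integrals, which are genuine oscillatory/Mellin-type integrals in several real variables with a smooth amplitude rather than a locally constant one, extend meromorphically with no extra poles beyond those coming from the monomial factors. The standard device is to integrate by parts (or differentiate under the integral sign) in each critical variable $y_i$ to trade powers of $|y_i|_K$ for derivatives of the amplitude, thereby pushing the abscissa of convergence arbitrarily far and exhibiting the poles explicitly; one also needs the amplitude, after incorporating the unit factors $\eta,\varepsilon_f,\varepsilon_g$ and the test function, to remain smooth and compactly supported, which is where the local finiteness Convention from Remark \ref{convention} is used. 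Since the paper explicitly says it will ``provide less details'' here, I would present this as a direct adaptation of Igusa's argument, citing \cite{I2} for the one-variable Archimedean computation and the integration-by-parts technique, and simply remark that the two-variable input \cite[Th\'eor\`eme 1.1.4]{Lo2} is equally available over $\mathbb{R}$-fields, from which the specialization $\omega_1=\omega$, $\omega_2=\omega^{-1}$ yields the statement.
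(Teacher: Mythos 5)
Your proposal is correct and follows essentially the same route as the paper, which simply invokes the specialization $\omega_1=\omega$, $\omega_2=\omega^{-1}$ of Loeser's multivariable result together with the monomialization argument of \cite[Theorem 5.4.1]{I2}; the only cosmetic difference is that you phrase the meromorphic continuation of the monomial chart integrals via integration by parts and Taylor expansion of the amplitude, whereas Igusa's cited proof packages the same device through Bernstein polynomials of monomials. Your one-variable pole locations, the handling of the unit factors $\eta,\varepsilon_f,\varepsilon_g$ inside the amplitude, and the order bound $r\leq n$ all match what the paper intends.
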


\begin{proof} Analogously as in the proof of Theorem \ref{Theorem1}, this can be derived from the
results on multivariable zeta functions \cite{Lo2}, although there
the detailed form of the possible poles is not mentioned explicitly.
Anyway, (2) can be shown exactly as in the proof of \cite[Theorem
5.4.1]{I2}.
\end{proof}

\begin{remark}\label{alternreal}\rm
Again, as a preparation for Sections \ref{section oscillatory} and
\ref{section expansions}, we mention the main idea of (the
generalization of) Igusa's proof in the classical case.

After pulling back the integral $Z_{\Phi}\left( \omega;f/g\right) $
to an integral over $X_K\smallsetminus \sigma^{-1}(D_{K})$ via the
resolution $\sigma$, subdividing the new integration domain, and
this time also using a partition of the unity,
one writes $Z_{\Phi}\left( \omega;f/g\right) $ as a finite linear
combination of \lq monomial\rq\ integrals. With the notation of
Theorem \ref{thresolsing}, we can assume that these are of the form
\[
\int_{K^n\setminus \cup_{i=1}^r \{y_i=0\}} \Theta(y)\,
\omega\left(\epsilon_f(y)\epsilon_g^{-1}(y)\right)\prod_{i=1}^r
\omega^{N_{i}}\left(y_{i}\right) \prod_{i=1}^r\left\vert
y_{i}\right\vert_K^{v_{i} -1} \left\vert dy\right\vert _{K},
\]
where $\Theta\left( y\right) $ is a smooth function with support in
the polydisc
\[
\left\{ y\in K^{n}\mid \left\vert y_{j}\right\vert_K <1\text{ for }
j=1,\ldots ,n\right\} .
\]
Note again that, following Remark \ref{units}, it is possible that
all $N_i=0$, while some $v_i>1$.
\end{remark}

\begin{remark}\label{polesoverC}\rm Looking more in detail at the proof of \cite[Theorem 5.4.1]{I2},
which uses Bernstein polynomial techniques for dealing with the
poles of monomial integrals, we have in fact a somewhat sharper
result concerning the poles of $Z_{\Phi}(\omega;f/g)$ when
$K=\mathbb{C}$: when $\omega\left( z\right) =\left\vert z\right\vert
_{K}^{s}\left( \frac{z}{\left\vert z\right\vert }\right) ^{l}$, they
are of the form $ s= - \frac{|l|}2 -\frac{v_{i}+k}{N_{i}}, k \in
\mathbb{Z}_{\geq 0},$ for $N_i>0$, and $ s= \frac{|l|}2
+\frac{v_{i}+k}{|N_{i}|}, k \in \mathbb{Z}_{\geq 0},$ for $N_i<0$.
\end{remark}

\begin{remark}\rm
For later use in Sections \ref{section oscillatory} and \ref{section
expansions}, we consider in Theorem \ref{Theorem2} the case that
$g=1$ and $f^{-1}\{0\} \cap \supp \Phi$ has no singular points. If
$K=\mathbb{C}$, we have as special case of Remark \ref{polesoverC}
that the poles of $Z_{\Phi}\left( \omega_s\chi_l(ac);f\right) $ are
of the form $s=-\frac {|l|}2 - 1 - k,\ k\in \mathbb{Z}_{\geq 0}$. If
$K=\mathbb{R}$, the poles of $Z_{\Phi}\left(
\omega_s\chi_l(ac);f\right) $ are odd integers when $l=0$ and even
integers when $l=1$. Both when $K=\mathbb{C}$ and $K=\mathbb{R}$
these poles are of order $1$ \cite[Theorem 4.19]{Ja},\cite[II
\S7]{AVG}.
\end{remark}

Here we also want to mention that there is substantial work of
Barlet and his collaborators related to Theorem \ref{Theorem2} and
the remarks above, see e.g. \cite{BM}.

\subsection{Existence of poles, largest and smallest poles}\label{subsection:about poles}

Take $f,g:U\to K$ and a non-zero test function $\Phi$ as in Section
\ref{section preliminaries}. We consider
\[
Z_{\Phi}\left( s;f/g\right) :=Z_{\Phi}\left( \omega_s;f/g\right)=
{\displaystyle\int\limits_{U\smallsetminus D_{K}}}
\Phi\left( x\right) \left\vert \frac{f\left( x\right) }{g\left(
x\right) }\right\vert _{K}^{s}\left\vert dx\right\vert_K
\]
for $s\in \mathbb{C}$. By the results of the previous subsections,
we know that this integral converges when $\beta
<\operatorname{Re}(s)<\alpha$, where $\alpha$ and $\beta$ are as in
Definition \ref{defalphabeta}, hence a priori depending on some
chosen embedded resolution.

It turns out that $\alpha$ and $\beta$ in fact {\em do not} depend
on the chosen resolution. This follows from the next result, that
generalizes the classical result for the zeta function of an
analytic function $f$. For that classical result Igusa gives in
\cite{I1} the strategy of a proof using Langlands' description of
residues in terms of principal value integrals \cite{Lan}, see also
\cite{AVG} for $K=\mathbb{R}$. This idea uses explicit meromorphic
continuations of certain monomial integrals, and it could probably
be extended to the case of meromorphic functions $f/g$. Here we
provide a direct proof that also works simultaneously for all fields
$K$.

\begin{theorem}
\label{Theorem3} Take $K$, $f$, $g$, $\sigma:X_{K}\rightarrow U$ and
$\alpha_\Phi\left( \sigma ,D_{K}\right), \beta_\Phi\left( \sigma
,D_{K}\right)$ as in Section \ref{section preliminaries}.

(1) Assume that $\beta_\Phi\left( \sigma ,D_{K}\right) \neq-\infty$,
and that it is equal to $-\frac{v_i}{N_i}$ precisely for $i\in
T_\beta \, (\subset T_+)$. If $\Phi$ is positive with $\Phi(P)>0$
for some $P\in \sigma (\cup_{i\in T_\beta}E_i)$, then
$\beta_\Phi\left( \sigma,D_{K}\right) $ is a pole of $Z_{\Phi}\left(
s;f/g\right) $.

(2) Assume that $\alpha_\Phi\left( \sigma ,D_{K}\right) \neq
+\infty$, and that it is equal to $\frac{v_i}{|N_i|}$ precisely for
$i\in T_\alpha \, (\subset T_-)$. If $\Phi$ is positive with
$\Phi(P)>0$ for some $P\in \sigma (\cup_{i\in T_\alpha}E_i)$, then
$\alpha_\Phi\left( \sigma,D_{K}\right) $ is a pole of
$Z_{\Phi}\left( s;f/g\right) $.

(3) Hence $\alpha_\Phi\left( \sigma,D_{K}\right) $ and
$\beta_\Phi\left( \sigma,D_{K}\right) $ do not depend on $\left(
\sigma,D_{K}\right) $.
\end{theorem}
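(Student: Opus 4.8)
The plan is to prove (1) and (2) by a positivity argument along the real axis, and then deduce (3) formally; I describe (1) in detail, since (2) is entirely symmetric (replace \lq$s\to\beta^{+}$\rq\ by \lq$s\to\alpha^{-}$\rq, $T_\beta$ by $T_\alpha$, and $N_i>0$ by $N_i<0$). First I would observe that by Theorem \ref{Theorem1} (resp. Theorem \ref{Theorem2}) the function $Z_{\Phi}(s;f/g)$ is meromorphic in a neighbourhood of $s=\beta:=\beta_\Phi(\sigma,D_K)$ and is analytic on the strip $\beta<\operatorname{Re}(s)<\alpha$; hence it suffices to show that $Z_{\Phi}(s;f/g)\to +\infty$ as $s$ decreases to $\beta$ along the reals, since a function meromorphic near $\beta$ that is real and unbounded as $s\to\beta^{+}$ must have a pole at $\beta$.

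To prove this divergence I would pull the integral back along $\sigma$ and use the local descriptions of Remark \ref{altern} (for $p$-fields) and Remark \ref{alternreal} (for $\mathbb{R}$-fields), choosing the subdivision, resp. the partition of unity, with non-negative pieces. Since $\Phi$ is positive, the units $\eta,\varepsilon_f,\varepsilon_g$ contribute positive factors $|\eta|_K$, $|\varepsilon_f/\varepsilon_g|_K^{s}$, and the measure is positive, one gets $Z_{\Phi}(s;f/g)=\sum_j I_j(s)$ with $I_j(s)\ge 0$ for real $s\in(\beta,\alpha)$, so it is enough to make one single piece blow up. I would pick $b\in X_K$ with $\sigma(b)=P$ and $b\in E_i$ for some $i\in T_\beta$, and a chart $B$ around $b$ as in Theorem \ref{thresolsing}, small enough that $\Phi\circ\sigma\ge\delta>0$ on $B$ (possible because $\Phi$ is continuous, resp. locally constant, and $\Phi(P)>0$) and that some partition-of-unity function is bounded below by a positive constant on a subpolydisc. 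On such a chart the corresponding piece is bounded below by a positive constant times a product $\prod_{k=1}^{r}J_k(s)$ of one-variable monomial integrals $J_k(s)=\int_{c_k+\mathfrak{p}^{e}R_K\setminus\{0\}}|y_k|_K^{N_k s+v_k-1}|dy_k|_K$, resp. the Archimedean analogue, and here $c_k$ lies in $\mathfrak{p}^eR_K$ (resp. equals $0$) for all $k\le r$ because $b\in E_k$. For the index $k$ with $E_k=E_i$ one has $N_k=N_i>0$ and $-v_k/N_k=\beta$, so $J_k(s)\to+\infty$ as $s\to\beta^{+}$ by the first case of Lemma \ref{lemma3} (resp. by $\int_0^{c}t^{N_k s+v_k-1}\,dt=c^{N_k s+v_k}/(N_k s+v_k)$). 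For every other $k\le r$ the factor $J_k(s)$ stays positive and bounded near $s=\beta$: if $N_k=0$ it is a positive constant; if $N_k>0$ then $-v_k/N_k\le\beta$ (it may be $=\beta$, which only helps); if $N_k<0$ then $v_k/|N_k|\ge\alpha>\beta$. Hence the piece tends to $+\infty$, and so does $Z_{\Phi}(s;f/g)$. Note that adaptedness of $\sigma$ is not needed here.

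For (3), I would apply (1) and (2) with a conveniently chosen positive test function. Given a resolution $\sigma$ and a positive $\Phi$ satisfying the support hypotheses of (1) and (2), the combination of those two parts with the analyticity on $(\beta_\Phi(\sigma),\alpha_\Phi(\sigma))$ shows that $\beta_\Phi(\sigma)$ is the supremum of the real parts of the poles of the intrinsic meromorphic function $Z_{\Phi}(\cdot;f/g)$ that lie in $\{\operatorname{Re}(s)<0\}$, and $\alpha_\Phi(\sigma)$ is the infimum of those in $\{\operatorname{Re}(s)>0\}$ --- quantities manifestly independent of $\sigma$. For a general test function one reduces to this case: absolute convergence of $Z_{\Phi}(s;f/g)$ depends only on $\operatorname{Re}(s)$ and on $|\Phi|$, and the set of such $\sigma=\operatorname{Re}(s)$ is an interval (because $t^{\sigma}\le t^{\sigma_1}+t^{\sigma_2}$ for $t>0$ and $\sigma\in[\sigma_1,\sigma_2]$); comparing $|\Phi|$ with a positive test function of the same support and invoking the previous case pins this interval down to $(\beta_\Phi(\sigma),\alpha_\Phi(\sigma))$, independently of $\sigma$.

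The genuinely delicate points are the bookkeeping in (1)--(2), namely checking that none of the auxiliary monomial factors $J_k$ already diverges at $s=\beta$ (this is exactly where the extremal definitions of $\alpha=\min v_i/|N_i|$ and $\beta=\max(-v_i/N_i)$ and the inequality $\beta<\alpha$ are used), and, in (3), the reduction from an arbitrary $\Phi$ to positive ones --- i.e. verifying that shrinking or enlarging the support does not move the relevant numerical data. This reduction is where I would be most careful.
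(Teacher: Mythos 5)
Your proposal is correct and follows essentially the same route as the paper: both prove (1) by pulling back along $\sigma$, using positivity of $\Phi$ to bound $Z_{\Phi}(s;f/g)$ from below by a local contribution that diverges as $s\to\beta^{+}$ along the reals, and then invoking meromorphy to conclude that $\beta$ is a pole; (2) follows by the symmetry $Z_{\Phi}(s;f/g)=Z_{\Phi}(-s;g/f)$, and (3) by intrinsically characterizing $\alpha,\beta$ via the poles of $Z_\Phi$. The only cosmetic difference is that the paper takes a \emph{generic} point of a component realizing $\beta$, reducing to a single factor $|y_1|_K^{-1}$ and the Monotone Convergence Theorem, whereas you work at an arbitrary point over $P$ and check boundedness of the remaining monomial factors $J_k$ explicitly; both are valid.
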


\begin{proof}
(1) Take a \textit{generic} point $b$ in a component $E$ of
$\sigma^{-1}\left( D_{K}\right) $ with numerical datum $\left(N=
N_{f}-N_{g},v\right) $ such that $\beta=\beta_\Phi\left( \sigma
,D_{K}\right)=-\frac{v}{N}$ and $\Phi(\sigma(b))>0$. Take also a
small enough polydisc-chart $B\subset X_{K}$ around $b$ with
coordinates $y=\left( y_{1},\ldots ,y_{n}\right) $ such that
$\Phi(\sigma(P))>0$ for all $P$ in the closure $\bar B$ of $B$,
\begin{equation}\label{simple monomial}
\left( \frac{f}{g}\circ\sigma \right) \left( y\right)
=\varepsilon\left( y\right) y_{1}^{N} \qquad \text{and}\qquad
\sigma^{\ast}\left( dx_{1} \wedge\ldots\wedge dx_{n}\right)
=\eta\left( y\right) y_{1}^{v-1},
\end{equation}
with $\varepsilon\left( y\right)$ and $\eta\left( y\right)$
invertible power series on $B$. We can change the coordinate $y_1$
in order to have that $\varepsilon$ is a constant, and assume that
$\left\vert \eta\left( y\right) \right\vert _{K}$ is bounded below
by a positive constant $C_1$. When $K$ is a $p$-field, we could
assume that $\left\vert \eta\left( y\right) \right\vert _{K}$ is
constant on $B$. Denoting $C_2= |\varepsilon|_K \min _{Q\in
\sigma(\bar B)}\Phi\left(Q\right) $, we have on $B$ that
$$
( \Phi \circ \sigma)(y) \left\vert \left(
\frac{f}{g}\circ\sigma\right) \left( y\right) \right\vert_{K}^\beta
\left\vert \eta\left( y\right) \right\vert _{K} |y_{1}^{v-1}|_K \geq
C_2\, |y_1|_K^{N\beta}\, C_1 |y_1|_K^{v-1} = C_2 C_1 |y_1|_K^{-1}.
$$
Then, by the $K$-analytic change of variables formula (see e.g.
\cite[Proposition 7.4.1]{I2} for $p$-fields),
\begin{align*}
{\displaystyle\int\limits_{U\smallsetminus D_{K}}} \Phi\left(
x\right) \left\vert \frac{f\left( x\right) }{g\left( x\right)
}\right\vert _{K}^{\beta}\left\vert dx\right\vert_K
 & =
{\displaystyle\int\limits_{X_K\smallsetminus \sigma^{-1}D_{K}}}
(\Phi\circ\sigma) \left( y\right) \left\vert \left(
\frac{f}{g}\circ\sigma\right) \left( y\right) \right\vert_{K}^\beta
\left\vert \sigma^*dx\right\vert_K\\
&\geq C_2 C_1 {\displaystyle\int\limits_{B\setminus \{y_1=0\}}}
|y_1|_K^{-1}|dy|_K = +\infty.
\end{align*}
Note that $Z_{\Phi}(\gamma;f/g)$ exists and is positive for $\gamma
\in \mathbb{R}$ satisfying $\beta<\gamma<0$. Then, by using the Monotone
Convergence Theorem, we have that
$$
\lim_{\gamma\to\beta} Z_{\Phi}(\gamma;f/g) = \lim_{\gamma\to\beta}
{\displaystyle\int\limits_{U\smallsetminus D_{K}}} \Phi\left(
x\right) \left\vert \frac{f\left( x\right) }{g\left( x\right)
}\right\vert _{K}^{\gamma}\left\vert dx\right\vert_K =+\infty.
$$
Hence $\beta$ must be a pole of $Z_{\Phi}(s;f/g)$, being a meromorphic function in the whole complex plane.


(2) By Part (1) we have that $-\alpha_\Phi\left( \sigma,D_{K}\right)
$ is
 a pole of $Z_{\Phi}\left( s;g/f\right) $, and
hence $\alpha_\Phi\left( \sigma,D_{K}\right) $ is a pole of
 $Z_{\Phi}\left( s;f/g\right) =Z_{\Phi}\left(-s;g/f\right) $.

\end{proof}

\begin{remark}\rm
Note that, whenever $T$ is finite, in particular when $f$ and $g$
are polynomials, the global $\alpha$ and $\beta$ of Definition
\ref{defalphabeta}(2) also do not depend on $\Phi$.
\end{remark}

\begin{remark}\rm
In \cite[Theorem 2.7]{VZ} we showed in the context of zeta functions
of mappings over $p$-fields that the analogue of $\beta$ is a real
part of a pole. With the technique above one can in fact show that
also in that context $\beta$ itself is a pole.
\end{remark}


\begin{corollary}
\label{coroll}Take $\frac{f}{g}:U\smallsetminus D_{K}\rightarrow K$
as in Section \ref{section preliminaries}.

(1) Assume that there exists a point $x_{0}\in U$ such that $f\left(
x_{0}\right) =0$ and $g\left( x_{0}\right) \neq0$. Then, for any
positive test function $\Phi$ with support in a small enough
neighborhood of $x_0$, the zeta function $Z_{\Phi}\left(
s;f/g\right) $ has a negative pole.

(2) Assume that there exists a point $x_{0}\in U$ such that $f\left(
x_{0}\right) \neq 0$ and $g\left( x_{0}\right) =0$. Then, for any
positive test function $\Phi$ with support in a small enough
neighborhood of $x_0$, the zeta function $Z_{\Phi}\left(
s;f/g\right) $ has a positive pole.

(3) In particular, if $K=\mathbb{C}$ and $f$ and $g$ are
polynomials, then $Z_{\Phi}\left( s;f/g\right) $ always has a pole
for an appropriate positive test function $\Phi$.
\end{corollary}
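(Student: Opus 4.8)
The plan is to deduce (1) and (2) directly from Theorem \ref{Theorem3}, and then to derive (3) from (1) and (2) by an elementary geometric argument over $\mathbb{C}$. For part (1): since $g$ is continuous and $g(x_0)\neq 0$, I would first fix an open neighborhood $W\ni x_0$ on which $g$ does not vanish, small enough (by Remark \ref{convention}) that $D_K\cap W$ admits a \emph{finite} embedded resolution $\sigma\colon X_K\to W$; so only finitely many components $E_i$ occur. I would then shrink $W$ once more, removing the finitely many closed proper subsets $\sigma(E_i)$ that do not contain $x_0$. Now take $\Phi$ positive with $\supp\Phi\subseteq W$ and $\Phi(x_0)>0$. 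Two observations drive the proof. First, every $E_i$ meeting $\sigma^{-1}(\supp\Phi)$ satisfies $N_{g,i}=0$ (because $g\circ\sigma$ is a unit on $\sigma^{-1}(W)$) while $N_{f,i}\geq 1$ (because $E_i\subseteq\sigma^{-1}(f^{-1}\{0\})$), so $N_i=N_{f,i}\geq 1>0$; hence $T_-=\emptyset$, $\alpha_\Phi(\sigma,D_K)=+\infty$, and every pole of $Z_\Phi(s;f/g)$ is negative. Moreover $T_+\neq\emptyset$, since $x_0\in f^{-1}\{0\}\cap\supp\Phi=\sigma(\bigcup_iE_i)\cap\supp\Phi$ lies in the image of some $E_{i_0}$, which then belongs to $T_+$. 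Second, by the construction of $W$, every $E_i$ meeting $\sigma^{-1}(\supp\Phi)$ has $x_0\in\sigma(E_i)$, so in particular $x_0\in\sigma\bigl(\bigcup_{i\in T_\beta}E_i\bigr)$ and $\Phi(x_0)>0$. Thus the hypotheses of Theorem \ref{Theorem3}(1) are met, and $\beta_\Phi(\sigma,D_K)$ is a pole of $Z_\Phi(s;f/g)$; it is negative because all $v_i\geq 1$ and $N_i\geq 1$.

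Part (2) I would obtain from (1) by the duality $Z_\Phi(s;f/g)=Z_\Phi(-s;g/f)$: the meromorphic function $g/f$ satisfies the hypothesis of (1) at $x_0$ (now $g$ vanishes there and $f$ does not), hence $Z_\Phi(s;g/f)$ has a negative pole, and therefore $Z_\Phi(s;f/g)$ has a positive pole. For part (3), since $Z_\Phi(\omega;f/g)$ depends only on the quotient $f/g$, I may assume $\gcd(f,g)=1$ in $\mathbb{C}[x_1,\dots,x_n]$; and since $f/g$ is non-constant, $f$ and $g$ are not both constant. If $f$ is non-constant, then $V(f)\neq\emptyset$ because $\mathbb{C}$ is algebraically closed, and $V(f)\not\subseteq V(g)$ because $f$ and $g$ share no irreducible factor (if some component $V(p)$ of $V(f)$, with $p$ irreducible dividing $f$, were contained in $V(g)$, then $p\mid g$). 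Choosing $x_0\in V(f)\setminus V(g)$ and a positive $\Phi$ as in (1) produces a pole. If instead $f$ is constant, then $g$ is non-constant, any $x_0\in V(g)$ has $f(x_0)\neq 0$, and part (2) applies.

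The main obstacle is verifying the positivity hypothesis of Theorem \ref{Theorem3}: one must guarantee that some component $E_i$ \emph{computing} $\beta_\Phi$ (resp. $\alpha_\Phi$) has a point of $\sigma(E_i)$ at which $\Phi$ is strictly positive, and not merely a point in $\supp\Phi$. This is precisely what forces the preliminary shrinking of $W$ (equivalently, of $\supp\Phi$): once the support avoids the finitely many images $\sigma(E_i)$ that miss $x_0$, every component relevant to $\supp\Phi$ passes through $x_0$, where $\Phi(x_0)>0$. The local finiteness of the resolution over a small neighborhood (Remark \ref{convention}) is exactly what makes this reduction legitimate; once it is in place, the remaining steps are routine bookkeeping with the numerical data $(N_i,v_i)$ and, for (3), with the standard dictionary between irreducible polynomials over $\mathbb{C}$ and their zero loci.
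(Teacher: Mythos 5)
Your proof is correct and follows essentially the same route as the paper: the existence of $x_0$ forces a component $E_i$ with $N_i>0$ (resp.\ $N_i<0$) meeting $\sigma^{-1}(\supp\Phi)$, hence $\beta\neq-\infty$ (resp.\ $\alpha\neq+\infty$), and Theorem \ref{Theorem3} produces the pole; your additional shrinking of $W$ so that every relevant $E_i$ passes through $x_0$ is a careful verification of the positivity hypothesis of that theorem which the paper glosses over, and your reading that $\Phi(x_0)>0$ is the intended (and necessary) interpretation of the statement, since a positive $\Phi$ supported near $x_0$ but off $f^{-1}\{0\}$ yields a pole-free zeta function. The paper additionally notes the one-line alternative for (1)--(2) that $f/g$ is analytic near $x_0$ so the classical result applies, and it leaves part (3) implicit; your coprimality/Nullstellensatz argument for (3) is correct.
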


\begin{proof}
Given $x_0$ as in (1), there must exist a component $E_i$ of
$\sigma^{-1}(D_K)$ in an embedded resolution $\sigma$ of $D_K$ for
which $N_i>0$. Hence $\beta\left( \sigma,D_{K}\right) \neq - \infty$
and then, by Theorem \ref{Theorem3}, it is a pole. Alternatively,
$f/g$ is analytic in a neighborhood of $x_0$, and hence this is
really just the classical result.

The argument for (2) is similar.
\end{proof}

\smallskip
When $f$ and $g$ have exactly the same zeroes in $U$, we cannot
derive any conclusion from Corollary \ref{coroll}. In fact, in that
case all possibilities can occur: no poles, only positive poles,
only negative poles, or both positive and negative poles. We provide
an example for each situation.

\begin{example}\label{example}\rm
Let $K=\mathbb{R}$, or $K=\mathbb{Q}_p$ such that $-1$ is not a
square in $\mathbb{Q}_p$ (that is, $p \equiv 3 \mod 4$). In the
examples below we take $U=K^2$ and we consider in each case
polynomials $f$ and $g$ such that the zero locus over $K$ of both
$f$ and $g$ consists of the origin.

(1) Take $f=(x^{2}+y^{2})^2$ and $g =x^{4}+y^{4}$. An embedded
resolution of $D_K$ is obtained by blowing up at the origin; the
exceptional curve has datum $(N,v)=(0,2)$. Hence, for any test
function $\Phi$, $Z_{\Phi}(s;f/g)$ does not have poles.

(2) Take $f=x^{2}+y^{2}$ and $g =x^{4}+y^{4}$. Again an embedded
resolution of $D_K$ is obtained by blowing up at the origin; the
exceptional curve now has datum $(N,v)=(-2,2)$. Hence $1$ is a pole
of $Z_{\Phi}(s;f/g)$ when $\Phi$ is positive around the origin, and
there are no negative (real parts of) poles.

(3) Take $f=x^{4}+y^{4}$ and $g=x^{2}+y^{2}$. Analogously, $-1$ is a
 pole of $Z_{\Phi}(s;f/g)$ when $\Phi$ is positive
around the origin, and there are no positive (real parts of) poles.

(4) Take $f=y^{2}+x^{4}$ and $g =x^{2}+y^{4}$. Now an embedded
resolution of $D_K$ is obtained by first blowing up at the origin,
yielding an exceptional curve with datum $(0,2)$, and next
performing two blow-ups at centres the origins of the two charts,
yielding exceptional curves with data $(2,3)$ and $(-2,3)$. Hence
$-3/2$ and $3/2$ are poles of $Z_{\Phi}(s;f/g)$ when $\Phi$ is
positive around the origin.
\end{example}


\begin{lemma}
\label{Lemma7} Take $\frac{f}{g}:U\smallsetminus D_{K}\rightarrow K$
as in Section \ref{section preliminaries}.

(1) The zeta function $Z_{\Phi}\left( s;f/g\right) $ has a negative
pole,
 for an appropriate
positive $\Phi$, if and only if there exists a bounded sequence
$\left( a_{n}\right)_{n\in\mathbb{Z}_{\geq 0}} \subset
U\smallsetminus D_{K}$ such that
\begin{equation}
\lim_{n\rightarrow\infty}\left\vert \frac{f\left( a_{n}\right)
}{g\left( a_{n}\right) }\right\vert _{K}=0. \label{limit_1}
\end{equation}

(2) The zeta function $Z_{\Phi}\left( s;f/g\right) $ has a positive
pole,
for an appropriate positive $\Phi$, if and only if there exists a
bounded sequence $\left( a_{n}\right)_{n\in\mathbb{Z}_{\geq 0} }
\subset U\smallsetminus D_{K}$ such that
\begin{equation}
\lim_{n\rightarrow\infty}\left\vert \frac{f\left( a_{n}\right)
}{g\left( a_{n}\right) }\right\vert _{K}=\infty. \label{limit_1}
\end{equation}

\end{lemma}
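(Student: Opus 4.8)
The plan is to connect condition \eqref{limit_1} with the existence of a suitable component of an embedded resolution via Theorem \ref{Theorem3}, and then treat the two parts symmetrically using the identity $Z_{\Phi}(s;f/g)=Z_{\Phi}(-s;g/f)$ as in the proof of Corollary \ref{coroll}. I will prove part (1) in detail; part (2) then follows by applying part (1) to $g/f$ in place of $f/g$, since $|f/g|_K\to\infty$ along a bounded sequence iff $|g/f|_K\to0$ along it, and a positive pole of $Z_{\Phi}(s;f/g)$ is a negative pole of $Z_{\Phi}(s;g/f)$.

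For part (1), the $(\Leftarrow)$ direction is the substantive one. Suppose such a bounded sequence $(a_n)$ exists. Passing to a subsequence, we may assume $a_n\to a\in\overline{U}$; the boundedness and the requirement that the $a_n$ stay in a region where $f,g$ are defined let me arrange (after shrinking) that $a\in U$. By continuity, $|f(a_n)/g(a_n)|_K\to0$ forces, in the limit on a resolution space, the existence of a point in $\sigma^{-1}(D_K)$ near which $f^\ast/g^\ast$ vanishes; concretely, lifting the $a_n$ through the proper map $\sigma$ (using properness to extract a convergent subsequence of lifts $\tilde a_n\to \tilde a\in X_K$), the local description \eqref{for3}–\eqref{for4} shows $(f^\ast/g^\ast)(\tilde a_n)=\varepsilon_f/\varepsilon_g\cdot\prod y_i^{N_i}\to0$, which is impossible unless some coordinate $y_i\to0$ with $N_i>0$, i.e. $\tilde a$ lies on a component $E_i$ with $N_i>0$. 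Since $\tilde a_n$ stays bounded and $\sigma(\tilde a_n)=a_n$ stays in a fixed compact neighborhood, we may take $\Phi$ positive and supported near $\sigma(\tilde a)$ with $\Phi(\sigma(\tilde a))>0$; this places $\sigma(\tilde a)$ in the image of $\cup_{i\in T_\beta}E_i$ for the resolution restricted to $U_\Phi$, so $\beta_\Phi(\sigma,D_K)\neq-\infty$, and Theorem \ref{Theorem3}(1) gives that $\beta_\Phi(\sigma,D_K)<0$ is a pole. For $(\Rightarrow)$: if $Z_{\Phi}(s;f/g)$ has a negative pole for some positive $\Phi$, then by Theorem \ref{Theorem1}/\ref{Theorem2} there is a component $E_i$ with $N_i>0$ meeting $\sigma^{-1}(\supp\Phi)$; choosing a generic point $b\in E_i$ and a chart as in \eqref{for3}–\eqref{for4}, the points $a_n:=\sigma(b_n)$, where $b_n\to b$ has first coordinate $y_1=1/n$ and the others fixed generically, form a bounded sequence in $U\smallsetminus D_K$ with $|f(a_n)/g(a_n)|_K\sim|\varepsilon_f/\varepsilon_g|_K\cdot n^{-N_i}\to0$.

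The main obstacle I anticipate is the bookkeeping around compactness and the locally-finite nature of the resolution (Remark \ref{convention}): one must ensure that the bounded sequence $(a_n)$ lives in a region $U_\Phi$ over which $\sigma$ is a genuine finite composition of blow-ups, so that lifts $\tilde a_n$ have a convergent subsequence in a compact piece of $X_K$ and the finitely many components $E_i$ with their data $(N_i,v_i)$ are available. A secondary subtlety, in the $p$-field case, is that "bounded" and "convergent subsequence" must be interpreted with respect to the $K$-topology, but this is standard since balls in $K^n$ are compact. Once the correct neighborhood is fixed, the argument reduces cleanly to the already-established Theorem \ref{Theorem3}, so the heart of the matter is genuinely this reduction rather than any new estimate.
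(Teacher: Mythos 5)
Your proposal is correct and follows essentially the same route as the paper: both directions reduce, via Theorem \ref{Theorem3}, to the existence of a component $E_i$ with $N_i>0$ (resp.\ $N_i<0$) meeting $\sigma^{-1}(\supp\Phi)$, using properness of $\sigma$ to lift the bounded sequence and extract a convergent subsequence whose limit must lie on such a component in one direction, and pushing down a sequence approaching a generic point of such an $E_i$ in the other. The only cosmetic point is that over a $p$-field the coordinate choice $y_1=1/n$ should be replaced by, say, $y_1=\mathfrak{p}^n$ so that $|y_1|_K\to 0$.
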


\begin{proof} (1)
We consider an adapted embedded resolution $\sigma$ of $D_K$ as in
Theorem \ref{thresolsing}, obtained by resolving the indeterminacies
of $f/g$. From Theorem \ref{Theorem3}, it is clear that
$Z_{\Phi}\left( s;f/g\right) $ has a negative pole
for an appropriate positive $\Phi$ if and
only if there exists a component $E_i$ of $\sigma^{-1}\left(
D_{K}\right) $ with $N_i>0$.

Assume first that $N_i>0$ for the component $E_i$. Take a generic
point $b$ of $E_i$ and a small enough chart $B\subset X_{K}$ around
$b$ with coordinates $y=\left( y_{1},\ldots ,y_{n}\right) $, such
that the equation of $E_i$ around $b$ is $y_1=0$. Take a sequence
$(a'_n)_{n\in \mathbb{Z}_{\geq 0}}$ defined by
$a'_n=(z_n,0,\dots,0)$, where all $z_n\neq 0$ and $
\lim_{n\rightarrow\infty}\left\vert z_n\right\vert _{K}=0 $. Clearly
$a'_n$ converges (in norm) to $b$, and hence $\frac fg
(\sigma(a'_n))=(\frac fg \circ\sigma) (a'_n)$ converges to $(\frac
fg \circ \sigma) (b)$, and this equals zero since $N_i>0$.
Consequently the sequence $(\sigma(a'_n))_{n\in\mathbb{Z}_{\geq 0}}$
is as desired.

Conversely, assume that the bounded sequence $\left(
a_{n}\right)_{n\in\mathbb{Z}_{\geq 0}} \subset U\smallsetminus
D_{K}$ satisfies $\lim_{n\rightarrow\infty}\left\vert \frac{f\left(
a_{n}\right) }{g\left( a_{n}\right) }\right\vert _{K}=0$. Say the
$a_n$ belong to the compact subset $A$ of $U$. Since $\sigma$ is an
isomorphism outside the inverse image of $D_K$, each $a_n$ has a
unique preimage $a'_n$ in $X_K$. The $a'_n$ belong to the compact
set $\sigma^{-1}(A)$ (recall that $\sigma$ is proper), and hence we
may assume, maybe after restricting to a subsequence, that the
sequence $(a'_n)$ converges (in norm) to an element $a'$ of
$\sigma^{-1}(A)$. The map $\frac fg \circ \sigma$ is everywhere
defined (as map to the projective line) and continuous, yielding
$$
\big(\frac fg \circ\sigma\big)(a')= \big(\frac fg
\circ\sigma\big)(\lim_{n\rightarrow\infty}a'_n)=\lim_{n\rightarrow\infty}\big(\frac
fg \circ\sigma\big)(a'_n)=\lim_{n\rightarrow\infty}\frac fg(a_n)=0.
$$
Consequently $a'$ belongs to the union of the $E_i$ satisfying
$N_i>0$, and then in particular there is at least one such $E_i$.

(2) This is proved analogously.
\end{proof}

\subsection{Denef's explicit formula}

Here we assume that $f$ and $g$ are polynomials over a $p$-field
$K$, and that $\Phi$ is the characteristic function of a union $W$
of cosets$\mod P_K^n$ in $R_K^n$. Denef introduced in \cite{D2} the
notion of {\em good reduction$\mod P_K$} for an embedded resolution,
and proved an explicit formula \cite[Theorem 3.1]{D2} for
$Z_{\Phi}\left(s;f\right)$ when $\sigma$ has good reduction$\mod
P_K$. His arguments extend to the
 case of a rational function $f/g$.

Before stating the formula, we mention the following important
property. When $f$ and $g$ are defined over a number field $F$, one
can choose also the embedded resolution $\sigma$ of $f^{-1}\{0\}\cup
g^{-1}\{0\}$ to be defined over $F$. We can then consider $f/g$ and
$\sigma$ over any non-archimedean completion $K$ of $F$, and then
$\sigma$ has good reduction$\mod P_K$ for all but a finite number of
completions $K$.


\begin{theorem}
\label{TheoremDenef} Let $f,g \in R_K\left[x_{1},\ldots
,x_{n}\right]$ such that $f/g$ is not constant, and suppose that
$\Phi$ is the characteristic function of $W$ as above. If $\sigma$
has good reduction$\mod P_K$, then, with the notation of Theorem
\ref{thresolsing},
 we have
\begin{equation*}
Z_{\Phi}\left( s;f/g\right) =q^{-n}\sum\limits_{I\subset
T}c_{I}\prod\limits_{i\in I}\frac{q-1}{q^{v_i +N_{i}s}-1},
\end{equation*}
where
\begin{equation*}
c_{I}=\card\left\{ a\in \overline{X}\left( \overline{K}\right) \mid
a\in \overline{E_{i}}\left( \overline{K}\right) \Leftrightarrow i\in I\text{%
; and }\overline{\sigma }(a)\in \overline{W}\right\} .
\end{equation*}
Here $\overline{\cdot }$ denotes the reduction mod $P_K$, for which
we refer to \cite[Section 2]{D2}.
\end{theorem}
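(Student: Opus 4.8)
The plan is to follow exactly the strategy of Denef's proof of \cite[Theorem 3.1]{D2} for the single-polynomial case, checking that nothing breaks when $f$ is replaced by the pair $(f,g)$. First I would pull back the integral defining $Z_\Phi(s;f/g)$ along the resolution $\sigma$, using the $p$-adic change of variables formula: since $\sigma$ is an isomorphism off $\sigma^{-1}(D_K)$ and $D_K$ has measure zero, we get
\[
Z_\Phi(s;f/g)=\int_{X_K\setminus\sigma^{-1}(D_K)}(\Phi\circ\sigma)(y)\,\left|\tfrac{f^*(y)}{g^*(y)}\right|_K^{s}\,|\sigma^*dx|_K .
\]
Using the good reduction hypothesis, the manifold $X_K$ decomposes into residue classes indexed by the $\overline{K}$-points $a$ of the reduction $\overline{X}$, and on each such class one has local coordinates in which $f^*$, $g^*$ and the Jacobian $\sigma^*dx$ are simultaneously monomial (times units), by \eqref{for2}, \eqref{for3}, \eqref{for4}; the exponents are governed precisely by the set $I=I(a)=\{i\in T\mid a\in\overline{E_i}(\overline{K})\}$ of components through $a$. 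The number of residue classes with a prescribed $I$ and landing in $\overline W$ under $\overline\sigma$ is by definition $c_I$.

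The key computation is then the integral over a single residue class. On such a class the integrand is, up to a unit of constant norm, $\prod_{i\in I}|y_i|_K^{v_i-1+N_i s}$ (note $N_i=N_{f,i}-N_{g,i}$, so the quotient $f^*/g^*$ produces the exponent $N_i s$), integrated over a product of the form $\prod_{i\in I}(\mathfrak p R_K\setminus\{0\})\times\prod_{i\notin I}R_K$ inside a polydisc. This factorizes, and each one-dimensional factor is evaluated by the first case of Lemma \ref{lemma3} (with $a=0$, $e=1$, trivial $\chi$, and $n$ there equal to $1$), giving $(1-q^{-1})\dfrac{q^{-1-N_i s}}{1-q^{-1-N_i s}}=\dfrac{q-1}{q^{1+N_i s}-1}$; absorbing the ambient $q^{-n}$ normalization from the Haar measure and the trivial factors $i\notin I$ reproduces the factor $\dfrac{q-1}{q^{v_i+N_i s}-1}$ after accounting for the $v_i-1$ shift. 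Summing over all residue classes, grouped by their index set $I$, yields the stated formula $Z_\Phi(s;f/g)=q^{-n}\sum_{I\subset T}c_I\prod_{i\in I}\dfrac{q-1}{q^{v_i+N_i s}-1}$.

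The main obstacle — and the only genuinely new point compared to Denef's original argument — is the bookkeeping of the exponents when $f^*/g^*$ is considered as a single function: one must be careful that for residue classes where some $N_i<0$ (i.e.\ $g^*$ vanishes to higher order along $E_i$), the function $f/g$ still extends and the local integral is still of monomial type with exponent $N_i s$, so that Lemma \ref{lemma3} applies uniformly in the sign of $N_i$; this is exactly where the good-reduction formulation and the adapted-resolution setup of Theorem \ref{thresolsing} do the work. Everything else — the definition of good reduction mod $P_K$, the counting of residue classes, the reduction map $\overline\sigma$ — is transcribed verbatim from \cite[Section 3]{D2}, and I would simply remark that Denef's arguments carry over, giving the details only for the monomial integral computation above.
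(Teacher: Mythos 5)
Your proposal is correct and follows exactly the route the paper takes, which is simply to observe that Denef's proof of \cite[Theorem 3.1]{D2} carries over verbatim once one notes that on each residue class the quotient $f^*/g^*$ is still monomial (times a unit) with exponents $N_i=N_{f,i}-N_{g,i}$ of either sign, so the local integrals are computed by Lemma \ref{lemma3} regardless of that sign. The only quibble is in your one-variable evaluation: the clean way is to apply Lemma \ref{lemma3} with its parameter $n$ equal to $v_i$ (not $1$), which directly gives $(1-q^{-1})\,q^{-v_i-N_is}/(1-q^{-v_i-N_is})=q^{-1}(q-1)/(q^{v_i+N_is}-1)$ per coordinate, the $q^{-1}$'s from the $|I|$ nontrivial and $n-|I|$ trivial factors combining into the global $q^{-n}$.
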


\begin{example}\rm  We compute $Z(s):= \int_{(p\mathbb{Z}_p)^2}
\left|\frac{f(x,y)}{g(x,y)}\right|_{\mathbb{Q}_p}^s |dx\wedge
dy|_{\mathbb{Q}_p}$ for the rational functions $f/g$ in Example
\ref{example}, using Theorem \ref{TheoremDenef}. Recall that
$p\equiv 3\mod 4$.

In case (1) we have simply that $Z(s)= \frac1{p^2}$, but this was
already obvious from the defining integral, since we are just
integrating the constant function $1$. The cases (2) and (3) yield
$$Z(s)=\frac{p^2-1}{p^2(p^{2-2s}-1)} \qquad\text{and}\qquad
Z(s)=\frac{p^2-1}{p^2(p^{2+2s}-1)},$$
 respectively.  For case (4) we obtain
\begin{align*}
Z(s)&=\frac{1}{p^2}\left[ p\frac{p-1}{p^{3+2s}-1} +
p\frac{p-1}{p^{3-2s}-1} + (p-1)\frac{p-1}{p^{2}-1} \right. \\
\qquad &\left. +\frac{(p-1)^2}{(p^{3+2s}-1)(p^2-1)} +
\frac{(p-1)^2}{(p^{3-2s}-1)(p^2-1)}\right] \\
 &=
\frac{(p-1)(p^{4+2s}+p^{4-2s}+p^5-p^4+p^3-p^2-p-1)}{p^2(p^{3+2s}-1)(p^{3-2s}-1)}.
\end{align*}
In addition, we provide an expression for case (4) when $p\equiv
1\mod 4$. Then the strict transform of $f^{-1}\{0\}$ and
$g^{-1}\{0\}$ (with data $(1,1)$ and $(-1,1)$, respectively),
intersects the second and third exceptional curve, respectively, in
two points. The formula of Theorem \ref{TheoremDenef} now yields
nine terms, and we obtain after an elementary computation that
$$
Z(s)=\frac{(p-1)D(p^s,p)}{p^2(p^{3+2s}-1)(p^{3-2s}-1)(p^{1+s}-1)(p^{1-s}-1)}
$$
with
\begin{align*}
D(p^s,p)&= p^{5+3s}+p^{5-3s} + (p^2-2p-1)(p^{4+2s}+p^{4-2s}) \\
+(-p^5&+p^3+p^2-p-1)(p^{1+s}+p^{1-s}) + p^7-p^6+2p^5-2p^4+2p^2+3p-1
.
\end{align*}
\end{example}

\subsection{\label{motivic} Motivic and topological zeta functions}

The analogue of the original explicit formula of Denef plays an
important role in the study of the motivic zeta function associated
to a regular function \cite{D-L}. One can associate more generally a
motivic zeta function to a rational function on a smooth variety,
and obtain a similar formula for it in terms of an embedded
resolution.

Since this is not the focus of the present paper, we just formulate
the more general definition and formula, referring to e.g.
\cite{D-L2}, \cite{V1}\ for the notion of jets and arcs,
Grothendieck ring, and motivic measure. Set as usual $\left[ \cdot
\right] $ for the class of a variety in the Grothendieck ring of
algebraic varieties over a field $F$, and $\mathbb{L}:=\left[
\mathbb{A}^{1}\right] $. We also denote by $\mathcal{M}$ the
localization of that Grothendieck ring with respect to $\mathbb{L}$
and the elements $\mathbb{L}^a -1, a\in \mathbb{Z}_{>0}$.

\smallskip
Let $Y$ be a smooth algebraic variety of dimension $n$ over a field
$F$ of characteristic zero, and $f/g$ a non-constant rational
function on $Y$. Let $W$ be a subvariety of $Y$. Denote for $n\in
\mathbb{Z}$ by $\mu(\mathfrak{X}_{n}^W)$ the motivic measure of the
arcs $\gamma $ on $Y$ with origin in $W$ for which $ord_{t}(
\gamma^{\ast}(f/g)) =n$.
 A priori, it is not clear that this set of
arcs is measurable, but one can show that $\mu(\mathfrak{X}_{n}^W)$
is well defined in $\mathcal{M}$ (see \cite[Theorem 10.1.1]{CL1}).

\begin{definition}
\label{defmotzeta}\rm With notation as above, the \textit{motivic
zeta function} associated to $f/g$ (and $W$) is
\begin{equation*}
Z_{W}(f/g);T) =\sum\limits_{n\in\mathbb{Z}}\ \mu (\mathfrak{X}
_{n}^W) T^n.
\end{equation*}
\end{definition}

\noindent
 Note that this is a series in $T$ and $T^{-1}$ (over
$\mathcal{M}$), and that such expressions in general do not form a
ring. As in the classical case however, $Z_{W}(f/g);T)$ turns out to
be a rational function in $T$, see \cite[Theorem 5.7.1]{CL1}.

Using the change of variables formula in motivic integration
(\cite[Lemma 3.3]{D-L2},\cite[Theorem 12.1.1]{CL1}), one obtains the
following
 formula for $Z_{W}(f/g);T)$  in terms of an embedded
resolution of $f^{-1}\{0\}\cup g^{-1}\{0\}$, which generalizes the
similar formula in the classical case.


%

\begin{theorem}
\label{Theoremmotivic1} Let $\sigma :X\rightarrow Y$ be an embedded
resolution of $f^{-1}\{0\}\cup g^{-1}\{0\}$, for which we use the
analogous notation $E_{i}$, $N_i= N_{f,i} - N_{g,i}$ and $v_{i}$
$(i\in T)$ as in Theorem \ref{thresolsing}. With also $E_{I}^{\circ
}:=\left( \cap _{i\in I}E_{i}\right) \setminus \left( \cup _{k\notin
I}E_{k}\right) $ for $ I\subset T$, we have
\begin{equation*}
Z_{W}\left( f/g;T\right) =\sum\limits_{I\subset T}\left[
E_{I}^{\circ }\cap \sigma ^{-1}W\right] \prod\limits_{i\in
I}\frac{\left( \mathbb{L}-1\right)
T^{N_{i}}}{\mathbb{L}^{v_{i}}-T^{N_{i}}}.
\end{equation*}
\end{theorem}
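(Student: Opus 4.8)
The plan is to derive the formula for $Z_W(f/g;T)$ by pulling back the motivic integral defining the zeta function along the embedded resolution $\sigma:X\to Y$, exactly mirroring the argument in the classical case $g=1$ but keeping track of the new feature that the relevant exponent on each $E_i$ is the \emph{difference} $N_i=N_{f,i}-N_{g,i}$, which may be negative or zero. First I would recall that $\mu(\mathfrak{X}_n^W)$ is the motivic measure of the set of arcs $\gamma$ on $Y$ with origin in $W$ satisfying $\mathrm{ord}_t(\gamma^*(f/g))=n$, and that by \cite[Theorem 10.1.1]{CL1} this is well defined in $\mathcal{M}$. Since $\sigma$ is an isomorphism outside $\sigma^{-1}(f^{-1}\{0\}\cup g^{-1}\{0\})$, and that locus has measure zero in the arc space, I would replace arcs on $Y$ by arcs on $X$; an arc $\tilde\gamma$ on $X$ has $\mathrm{ord}_t\big((f/g)\circ\sigma\big)(\tilde\gamma)=\sum_{i} N_i\,\mathrm{ord}_t(y_i\circ\tilde\gamma)$ in local coordinates where $E_i=\{y_i=0\}$, using \eqref{for3} and \eqref{for4} and the fact that $\varepsilon_f/\varepsilon_g$ is a unit hence contributes order zero.

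Next I would apply the change of variables formula in motivic integration (\cite[Lemma 3.3]{D-L2}, \cite[Theorem 12.1.1]{CL1}): the Jacobian of $\sigma$ contributes, via \eqref{for2}, a factor $\mathbb{L}^{-\sum_i (v_i-1)\,\mathrm{ord}_t(y_i)}$ (more precisely one works on the truncated jet spaces and passes to the limit). Stratifying the arc space of $X$ according to which components $E_i$ the origin of the arc lies on, i.e. by the locally closed strata $E_I^\circ=\big(\cap_{i\in I}E_i\big)\setminus\big(\cup_{k\notin I}E_k\big)$, and further by the orders $k_i:=\mathrm{ord}_t(y_i\circ\tilde\gamma)\in\mathbb{Z}_{>0}$ for $i\in I$ (with $\mathrm{ord}_t(y_k\circ\tilde\gamma)=0$ for $k\notin I$), the measure of the corresponding piece of arc space lying over a given stratum, intersected with $\sigma^{-1}W$, is $[E_I^\circ\cap\sigma^{-1}W]\,\prod_{i\in I}(\mathbb{L}-1)\mathbb{L}^{-k_i v_i}$, and on this piece $(f/g)\circ\sigma$ has order $\sum_{i\in I}N_i k_i$. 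Hence
\begin{equation*}
Z_W(f/g;T)=\sum_{I\subset T}[E_I^\circ\cap\sigma^{-1}W]\sum_{(k_i)_{i\in I}\in\mathbb{Z}_{>0}^{I}}\ \prod_{i\in I}(\mathbb{L}-1)\mathbb{L}^{-k_i v_i}T^{N_i k_i}.
\end{equation*}
Summing the geometric series in each $k_i$ separately gives $\prod_{i\in I}\frac{(\mathbb{L}-1)\mathbb{L}^{-v_i}T^{N_i}}{1-\mathbb{L}^{-v_i}T^{N_i}}=\prod_{i\in I}\frac{(\mathbb{L}-1)T^{N_i}}{\mathbb{L}^{v_i}-T^{N_i}}$, which is the claimed formula.

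The main obstacle, and the one point where the meromorphic (rational-function) setting genuinely differs from the classical one, is the convergence and well-definedness of the double sum: when $N_i<0$ for some $i$, the individual geometric series $\sum_{k_i}\mathbb{L}^{-k_iv_i}T^{N_ik_i}$ is a series in negative powers of $T$, so the total expression is a priori only a formal Laurent-type object in $T$ and $T^{-1}$, which as noted after Definition \ref{defmotzeta} do not form a ring. I would handle this exactly as the rationality statement \cite[Theorem 5.7.1]{CL1} is handled: each factor $\frac{(\mathbb{L}-1)T^{N_i}}{\mathbb{L}^{v_i}-T^{N_i}}$ is interpreted as a well-defined element of the appropriate localization (note $\mathbb{L}^{v_i}-T^{N_i}$, or rather $\mathbb{L}^{v_i}-1$ after clearing, is invertible in $\mathcal{M}[[T,T^{-1}]]$-completions used there), and the finite sum over $I\subset T$ of such rational functions is again a rational function in $T$; the identity of this rational function with the series $\sum_n\mu(\mathfrak{X}_n^W)T^n$ then holds coefficient-wise, each coefficient being a finite sum. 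The remaining steps—verifying that the Jacobian exponents and order computations are compatible with the truncation level in the change of variables formula, and that the stratification by $(I,(k_i))$ is measurable—are routine and identical to the classical argument, so I would only indicate them briefly and refer to \cite{D-L2} and \cite{CL1}.
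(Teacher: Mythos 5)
Your proposal is correct and follows essentially the same route as the paper, which derives the formula from the change of variables formula in motivic integration (\cite[Lemma 3.3]{D-L2}, \cite[Theorem 12.1.1]{CL1}) via the standard stratification by $E_I^\circ$ and the orders $k_i$, exactly as in the classical Denef--Loeser computation; your extra care about the factors with $N_i\leq 0$ and the interpretation of the resulting series in $T$ and $T^{-1}$ matches the paper's appeal to \cite[Theorem 5.7.1]{CL1}. (The paper also notes an alternative derivation via a two-variable motivic zeta function and specialization, which you do not need.)
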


Note that above we use only very special cases of the far reaching
general theory in \cite{CL1}. Alternatively, one could obtain
Theorem \ref{Theoremmotivic1} by considering a classical
two-variable motivic zeta function associated to $f$ and $g$, the
change of variables formula, and a specialization argument as in the
proof of Theorem \ref{Theorem1}.

We also want to mention that Raibaut \cite[Section 4.1]{R}
introduced another kind of motivic zeta function for a rational
function, in order to define a motivic Milnor fibre at the germ of
an indeterminacy point.
\bigskip

Specializing to topological Euler characteristics, denoted by $\chi
\left( \cdot \right) $, as in \cite[(2.3)]{D-L} or \cite[(6.6)]{V1},
we obtain the expression
\begin{equation*}
Z_{top,W}\left(f/g;s\right) :=\sum_{I\subset T}\chi \left(
E_{I}^{\circ }\cap \sigma ^{-1}W\right) \prod_{i\in I}\frac{1}{
v_{i}+N_{i}s}\in \mathbb{Q}(s),
\end{equation*}
which is then independent of the chosen embedded resolution. (When
the base field is not the complex numbers, we consider $\chi (\cdot
)$ in \'{e}tale $\overline{\mathbb{Q}}_\ell$-cohomology as in
\cite{D-L}.) It can be taken as a definition for the
\textit{topological zeta function} associated to $f/g$ (and $W$),
generalizing the original one of Denef and Loeser associated to a
polynomial \cite{D-L3}.

For $n=2$, Gonzalez Villa and Lemahieu give in \cite{GL} a different
definition of a topological zeta function associated to a
meromorphic function $f/g$, roughly not taking into account the
components $E_i$ with $N_i<0$. That construction is more ad hoc, but
well suited to generalize the so-called {\em monodromy conjecture}
in dimension $2$ to meromorphic functions.

\bigskip

\section{Oscillatory Integrals}\label{section oscillatory}

In this section we start the study of the asymptotic behavior of
oscillatory integrals attached to meromorphic functions. In the next
section we then prove our main results.

\subsection{The set-up}

\subsubsection{Additive characters}

We denote
\[
\Psi\left(  x\right)  :=\left\{
\begin{array}
[c]{lll}
\exp\left(2\pi\sqrt{-1}x\right) & \text{if} & x\in K=\mathbb{R}\\
&  & \\
\exp\left(4\pi\sqrt{-1}\operatorname{Re}(x)\right) & \text{if} &
x\in K=\mathbb{C}\text{.}
\end{array}
\right.
\]
We call $\Psi$ \textit{the standard additive character on}
$K=\mathbb{R}$, $\mathbb{C}$. Given
\[
z=\sum_{n=n_{0}}^{\infty}z_{n}p^{n}\in\mathbb{Q}_{p}\text{, with }z_{n}%
\in\left\{  0,\ldots,p-1\right\}  \text{ and }z_{n_{0}}\neq0,
\]
we set
\[
\left\{  z\right\}  _{p}:=\left\{
\begin{array}
[c]{lll}%
0 & \text{if} & n_{0}\geq0\\
&  & \\
\sum_{n=n_{0}}^{-1}z_{n}p^{n} & \text{if} & n_{0}<0,
\end{array}
\right.
\]
\textit{the fractional part of }$z$. We also put $\{0\}_p=0$. Then
$\exp(2\pi\sqrt{-1}\left\{ z\right\} _{p}),$ $z\in\mathbb{Q}_{p}$,
determines an additive character on $\mathbb{Q}_{p}$, trivial on
$\mathbb{Z}_{p}$ but not on $p^{-1}\mathbb{Z}_{p}$.

For a finite extension $K$ of $\mathbb{Q}_{p}$, we recall that there
exists an integer $d\geq0$ such that $Tr_{K/\mathbb{Q}
_{p}}(z)\in\mathbb{Z}_{p}$ for $\left\vert z\right\vert _{K}\leq
q^{d}$, but $Tr_{K/\mathbb{Q}_{p}}(z_{0})\notin\mathbb{Z}_{p}$ for
some $z_{0}$\ with $\left\vert z_{0}\right\vert _{K}=q^{d+1}$. The
integer $d$ is called \textit{the exponent of the different} of
$K/\mathbb{Q}_{p}$. It is known that $d\geq e-1$, where $e$ is the
ramification index of $K/\mathbb{Q}_{p}$, see e.g. \cite[VIII
Corollary of Proposition 1]{We}. The additive character
\[
\Psi(z):=\exp\Big(2\pi\sqrt{-1}\left\{  Tr_{K/\mathbb{Q}_{p}}(\mathfrak{p}%
^{-d}z)\right\}  _{p}\Big),\text{ }z\in K\text{, }%
\]
is a \textit{standard additive character} of $K$, i.e., $\Psi$ is
trivial on $R_{K}$ but not on $P_{K}^{-1}$.

\bigskip
\subsubsection{Oscillatory integrals}
 We take $\frac{f}{g}:U\to K$ and $\Phi$ as in Section \ref{section preliminaries}. The
oscillatory integral attached to $\left( \frac{f}{g},\Phi\right) $
is defined by
\[
E_{\Phi}\left(  z;f/g\right)  =%
{\displaystyle\int\limits_{U\smallsetminus D_{K}}} \Phi\left(
x\right) \Psi\left( z\cdot\frac{f\left( x\right) }{g\left(
x\right)  }\right)  \left\vert dx\right\vert _{K}%
\]
for $z\in K$. Our goal is to study the asymptotic behavior of
$E_{\Phi}\left( z;f/g\right) $ as $\left\vert z\right\vert
_{K}\rightarrow+\infty$ and of $E_{\Phi }\left( z;f/g\right)
-\int_{U}\Phi\left( x\right) \left\vert dx\right\vert _{K}$ as
$\left\vert z\right\vert _{K}\rightarrow0$.



\subsection{Classical results}

In the classical case of an analytic function $f$ (when $g=1$), the
oscillatory integral above is related to the zeta function
$Z_\Phi(\omega;f)$ and to the fibre integral
$$F_{\Phi}\left( t;f\right)=\int_{f\left( x\right) =t}
\Phi\left( x\right) \left\vert \frac{dx}{df}\right\vert_{K}$$ for
$t\in K^{\times}$, where $\frac{dx}{df}$ denotes the Gel'fand-Leray
differential form on $f\left( x\right) =t$. More precisely, denoting
by $C_f:=\{z\in U \mid \nabla f(z)=0\}$ the critical set of $f$, one
assumes that $C_f\cap \supp \Phi \subset f^{-1}\{0\}$. Then
asymptotic expansions of $E_{\Phi}\left( z;f\right)$ as $\left\vert
z\right\vert _{K}\rightarrow+\infty$ are closely related to
asymptotic expansions of $F_{\Phi}\left( t;f\right)$ as $\left\vert
t\right\vert_{K}\rightarrow0$, and to poles of $Z_\Phi(\omega;f)$.
These matters were extensively studied by many authors, for instance
in several papers of Barlet in the case of $\mathbb{R}$-fields. See
also \cite{AVG} when $K=\mathbb{R}$. Igusa presented in \cite{I3} a
uniform theory over arbitrary local fields of characteristic zero
(for polynomials $f$, but also valid for analytic functions).

We first recall three spaces of functions from \cite{I3} that we
will use, and the relations between them.

\subsubsection{Relating spaces of functions through Mellin and Fourier transform}

\begin{definition}\label{functionclassreal}\rm   Let $K$ be an $\mathbb{R}$-field.
Let $\Lambda$ be a strictly increasing (countable) sequence of
positive real numbers with no finite accumulation point, and $\{
n_\lambda\}_{\lambda\in\Lambda}$ a sequence of positive integers.

\smallskip

(1) We define $\mathcal {G}$ as the space of all complex-valued
functions $G$ on $K^\times$ such that
\smallskip
\noindent (i) $G\in C^{\infty }\left( K^{\times}\right) $;

\noindent (ii) $G(x)$ behaves like a Schwartz function of $x$ as
$\left\vert x\right\vert _{K}$ tends to infinity;

\noindent (iii) we have the asymptotic expansion
\begin{equation}
G\left( x\right) \approx \sum_{\lambda\in\Lambda}
\sum_{m=1}^{n_\lambda} b_{\lambda,m}\left( \frac{x}{\left\vert
x\right\vert }\right) \left\vert x\right\vert _{K}^{\lambda-1}\left(
\ln\left\vert x\right\vert _{K}\right) ^{m-1}\text{ as }\left\vert
x\right\vert _{K}\rightarrow0,\label{Gdef}
\end{equation}
where the $b_{\lambda,m}$ are smooth functions on $\left\{ u\in
K^{\times}\mid \left\vert u\right\vert _{K}=1\right\} $. In
addition, this expansion is termwise differentiable and uniform in
$x/|x|$ (we refer to \cite[page 23]{I3} for a detailed explanation
of this terminology).

\smallskip (2) We define $\mathcal {Z}$ as the space of all
complex-valued functions $Z$ on $\Omega(K^\times)$ such that
\smallskip

\noindent (i) $Z(\omega=\omega_s\chi_l(ac))$ is meromorphic on
$\Omega(K^\times)$ with poles at most at $s\in -\Lambda$;

\noindent (ii) there are constants $r_{\lambda,m,l}$ such that
$Z(\omega_s\chi_l(ac))-\sum_{m=1}^{n_\lambda}\frac{r_{\lambda,m,l}}{(s+\lambda)^m}$
is holomorphic for $s$ close enough to $-\lambda$, for all $\lambda
\in \Lambda$;

\noindent (iii) for every polynomial $P\in \mathbb{C}[s,l]$ and
every vertical strip in $\mathbb{C}$ of the form
$B_{\sigma_{1},\sigma_{2}}:=\left\{ s\in \mathbb{C} \mid
\operatorname{Re}(s)\in\left( \sigma_{1},\sigma_{2}\right) \right\}
$, we have that $P( s,l) Z (\omega_s\chi_l(ac))$ is uniformly
bounded for all $l$ and for $s\in B_{\sigma_{1},\sigma_{2}}$ with
neighborhoods of the points in $-\Lambda$ removed therefrom.
\end{definition}

\begin{definition}\label{functionclasspadic}\rm
 Let $K$ be a $p$-field. Let $\Lambda$ be a finite set of complex
numbers$\mod 2\pi i / \ln q$ with positive real part, and $\{
n_\lambda\}_{\lambda \in \Lambda}$ a sequence of positive integers.

\smallskip

(1) We define $\mathcal {G}$ as the space of all complex-valued
functions $G$ on $K^\times$ such that
\smallskip
\noindent (i) $G$ is locally constant;

\noindent (ii) $G(x)=0 $ when $\left\vert x\right\vert _{K}$ is
sufficiently large;

\noindent (iii) we have the expansion
\begin{equation}
G\left( x\right) = \sum_{\lambda\in \Lambda} \sum_{m=1}^{n_\lambda}
b_{\lambda,m}\left( ac(x)\right) \left\vert x\right\vert
_{K}^{\lambda-1}\left( \ln\left\vert x\right\vert _{K}\right)
^{m-1}\text{ for sufficiently small}\left\vert x\right\vert
_{K},\label{Gdefp}
\end{equation}
where the $b_{\lambda,m}$ are locally constant functions on
$R_K^\times$.

\smallskip (2) We define $\mathcal {Z}$ as the space of all
complex-valued functions $Z$ on $\Omega(K^\times)$ such that
\smallskip

\noindent (i) there are constants $r_{\lambda,m,\chi}$ such that
$Z(\omega_s\chi(ac))-\sum_{\lambda\in
\Lambda}\sum_{m=1}^{n_\lambda}\frac{r_{\lambda,m,\chi}}{(1-q^{-\lambda}q^{-s})^m}$
is a Laurent polynomial in $q^{-s}$;

\noindent (ii) 
 there exists a positive integer $e$  such that
$Z (\omega_s\chi(ac))$ is identically zero unless the conductor
$e_{\chi}$ of $\chi$ satisfies $e_{\chi}\leq e $.
\end{definition}

In order to simplify notation in the sequel of this article, we
adapted slightly Igusa's setup in the sense that we consider
expansions involving $\left\vert x\right\vert _{K}^{\lambda-1}$
instead of $\left\vert x\right\vert _{K}^{\lambda}$ in (\ref{Gdef})
and (\ref{Gdefp}). This implies that the statements of the theorems
below differ also slightly from the formulation in \cite{I3}.

\begin{theorem}{\cite[I  Theorems 4.2, 4.3 and 5.3]{I3}} \label{Thm_Igusa_ZF}
The Mellin transform $M_K$ induces a bijective correspondence
between the spaces $|x|_K\mathcal {G}$ and $\mathcal {Z}$. More
precisely, for any $G\in \mathcal {G}$, define $M_K (|x|_KG)$ as a
function on $\Omega_{(0,+\infty)}(K^\times)$ by
$$
M_K(|x|_K G)(\omega)=\int_{K^\times}|x|_K G(x)\omega(x) \frac
{|dx|_K}{m_K|x|_K},
$$
where $m_K$ is $2$, $2\pi$ or $1-q^{-1}$, according as $K$ is
$\mathbb{R}$, $\mathbb{C}$, or a $p$-field. Then its meromorphic
continuation to $\Omega(K^\times)$ is in $\mathcal {Z}$, and
the map $G\mapsto M_K (|x|_K G)$ induces a bijection between the
spaces $|x|_K \mathcal {G}$ and $\mathcal {Z}$.
\end{theorem}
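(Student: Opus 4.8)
The plan is to treat the $p$-field and $\mathbb{R}$-field cases in parallel, in each case reducing the statement to a family of one-variable Mellin transform problems. Writing $\omega=\omega_{s}\chi(\mathrm{ac})$, the group $\Omega(K^{\times})$ fibres over the (discrete) group of characters $\chi$ of the maximal compact subgroup of $K^{\times}$, and both the Mellin transform $M_{K}$ and the spaces $\mathcal{G}$, $\mathcal{Z}$ decompose accordingly: a function $G\in\mathcal{G}$ is recovered from its $\chi$-isotypic components $G_{\chi}$ (obtained by averaging against $\chi$), each a function of $|x|_{K}\in\mathbb{R}_{>0}$ (resp. of $|x|_{K}\in q^{\mathbb{Z}}$), and $M_{K}(|x|_{K}G)(\omega_{s}\chi(\mathrm{ac}))$ equals, up to the constant $m_{K}$, the classical Mellin transform $\int G_{\chi}(x)\,|x|_{K}^{s}\,d^{\times}x$ of $G_{\chi}$. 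So it suffices to establish the correspondence one character at a time and then control the uniformity in $\chi$.

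For the direction $\mathcal{G}\to\mathcal{Z}$, I would fix $G\in\mathcal{G}$ and split $G=G_{0}+G_{\infty}$ via a smooth (resp. locally constant) cutoff in $|x|_{K}$, with $G_{0}$ supported near $|x|_{K}=0$. The transform of $G_{\infty}$ converges for all $s$ (resp. is a Laurent polynomial in $q^{-s}$) because of the Schwartz decay (resp. eventual vanishing) of $G$, and is harmless. For $G_{0}$ one subtracts from the expansion (\ref{Gdef})/(\ref{Gdefp}) its partial sum over the (finitely many) $\lambda$ with $\mathrm{Re}(\lambda)<N$; each subtracted monomial $b_{\lambda,m}(\mathrm{ac}\,x)|x|_{K}^{\lambda-1}(\ln|x|_{K})^{m-1}$ has an explicit elementary Mellin transform --- a partial fraction $\sum_{m}r_{\lambda,m,l}/(s+\lambda)^{m}$ in the $\mathbb{R}$-field case, $\sum_{m}r_{\lambda,m,\chi}/(1-q^{-\lambda-s})^{m}$ in the $p$-field case, essentially by Lemma \ref{lemma3} --- while the remainder is holomorphic on $\mathrm{Re}(s)>-N$. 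Since only finitely many $\lambda$ lie in any half-plane (and $\Lambda$ is already finite in the $p$-adic case), letting $N\to\infty$ gives the meromorphic continuation with the prescribed principal parts, i.e. conditions (i)--(ii) of $\mathcal{Z}$. The vertical-strip bounds (iii), including the polynomial weight $P(s,l)$ in the $\mathbb{R}$-field case, come from repeated integration by parts in the defining integral of the transform of $G_{0}$: each step gains a factor $1/(s+\text{const})$ and replaces the integrand by something again controlled, precisely because the expansion (\ref{Gdef}) is termwise differentiable and uniform in $x/|x|$.

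For the direction $\mathcal{Z}\to\mathcal{G}$, given $Z\in\mathcal{Z}$ I would define $|x|_{K}G$ (hence $G$) by Mellin/Fourier inversion on $K^{\times}$: integrate $Z(\omega_{s}\chi(\mathrm{ac}))|x|_{K}^{-s}$ along a vertical line $\mathrm{Re}(s)=\sigma$ with $\sigma>0$ small, and sum over $\chi$ --- a finite sum in the $p$-field case by condition (ii), and a convergent one in the $\mathbb{R}$-field case by the polynomial bounds. Condition (iii) legitimises shifting the contour: pushing $\sigma\to+\infty$ (where there are no poles) shows $G$ is Schwartz (resp. eventually vanishing) as $|x|_{K}\to\infty$, and pushing $\sigma\to-\infty$, collecting the residue at each pole $s=-\lambda$, produces exactly the terms $b_{\lambda,m}(\mathrm{ac}\,x)|x|_{K}^{\lambda-1}(\ln|x|_{K})^{m-1}$ of (\ref{Gdef})/(\ref{Gdefp}), with the remaining integral a genuinely smaller error --- so $G\in\mathcal{G}$. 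Smoothness (resp. local constancy) of $G$ and the termwise-differentiable, uniform nature of the expansion follow by differentiating under the integral sign, again justified by (iii). Finally, the two constructions are mutually inverse by Fourier inversion on the locally compact abelian group $K^{\times}$; in particular $M_{K}$ is injective on $|x|_{K}\mathcal{G}$, which together with the two computations above gives the asserted bijection.

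The main obstacle I expect is not any individual estimate but the uniformity bookkeeping. In the $\mathbb{R}$-field case (genuinely only an issue for $K=\mathbb{C}$, where the character parameter $l$ runs over $\mathbb{Z}$) one must track how the $b_{\lambda,m}$, the principal parts $r_{\lambda,m,l}$, and all error terms depend on $l$, and verify the polynomial-weighted bounds of $\mathcal{Z}$(iii) uniformly in $l$; this is what forces the integration-by-parts argument to be carried out with explicit constants. In the $p$-field case the analogous point is the uniform conductor bound $e_{\chi}\le e$ of $\mathcal{Z}$(ii): it is obtained from the fact that $G$ is locally constant (hence invariant under a fixed $1+P_{K}^{N}$) together with its eventual vanishing, after which the $p$-adic case reduces to a finite computation with geometric series in the spirit of Lemma \ref{lemma3}.
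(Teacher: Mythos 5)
The paper gives no proof of this statement --- it is quoted directly from Igusa \cite[I Theorems 4.2, 4.3 and 5.3]{I3} --- so the only meaningful comparison is with Igusa's own argument, and your outline reproduces it faithfully: decomposition along characters of the unit group, the splitting of $G$ into a piece near $0$ (whose expansion terms have explicit elementary Mellin transforms yielding the prescribed principal parts, with the remainder holomorphic in an ever larger half-plane) and a harmless piece at infinity, integration by parts for the vertical-strip bounds, and Mellin inversion with contour shifting for the converse direction. Your closing paragraph also correctly identifies where the real technical work lies (uniformity in $l$ when $K=\mathbb{C}$, the uniform conductor bound in the $p$-field case), so I see no gap.
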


\begin{remark}\rm
Under the bijective correspondence $|x|_KG\mapsto Z$ above, the
coefficients in the expansion of $|x|_KG$ and the coefficients in
the Laurent expansion of $Z$ around its poles determine each other.
See \cite[I Theorems 4.2, 4.3 and 5.3]{I3} for the exact relations.
\end{remark}

\begin{theorem}{\cite[II Theorem 2.1]{I3}} \label{Thm_Igusa_2}
(1) Let $K$ be an $\mathbb{R}$-field. Let $\Lambda$ be a strictly
increasing (countable) sequence of positive real numbers with no
finite accumulation point, and $\{ n_\lambda\}_{\lambda\in\Lambda}$
a sequence of positive integers. Then the space $\mathcal {G}^*$ of
Fourier transforms of functions in $\mathcal {G}$ consists precisely
of all complex-valued functions $H$ on $K$ such that

\smallskip
\noindent (i) $H\in C^{\infty }\left( K\right) $;

\noindent (ii) we have the termwise differentiable uniform asymptotic
 expansion
\begin{equation}H\left( x\right)\approx \sum_{\lambda\in\Lambda} \sum_{m=1}^{n_\lambda}
c_{\lambda,m}\left( \frac{x}{\left\vert x\right\vert }\right)
\left\vert x\right\vert _{K}^{-\lambda}\left( \ln\left\vert
x\right\vert _{K}\right) ^{m-1}\text{ as }\left\vert x\right\vert
_{K}\rightarrow \infty,\label{Hthm}
\end{equation}
where the $c_{\lambda,m}$ are smooth functions on $\left\{ u\in
K^{\times}\mid \left\vert u\right\vert _{K}=1\right\} $.
Furthermore, if $H$ is the Fourier transform $G^*$ of $G\in\mathcal
{G}$ with asymptotic expansion (\ref{Gdef}),
then (\ref{Hthm}) is the termwise Fourier transform of (\ref{Gdef}).

(2) Let $K$ be a $p$-field. Let $\Lambda$ be a finite set of complex
numbers$\mod 2\pi i / \ln q$ with positive real part, and $\{
n_\lambda\}_{\lambda \in \Lambda}$ a sequence of positive integers.
Then the space $\mathcal {G}^*$ of Fourier transforms of functions
in $\mathcal {G}$ consists precisely of all complex-valued functions
$H$ on $K$ such that
\smallskip

\noindent (i) $H$ is locally constant;

\noindent (ii) we have the expansion
\begin{equation}
H\left( x\right) = \sum_{\lambda\in \Lambda} \sum_{m=1}^{n_\lambda}
c_{\lambda,m}\left( ac(x)\right) \left\vert x\right\vert
_{K}^{-\lambda}\left( \ln\left\vert x\right\vert _{K}\right)
^{m-1}\text{ for sufficiently large}\left\vert x\right\vert
_{K},\label{Hthmp}
\end{equation}
where the $c_{\lambda,m}$ are locally constant functions on
$R_K^\times$. Furthermore, if $H$ is the Fourier transform $G^*$ of
$G\in\mathcal {G}$ satisfying (\ref{Gdefp}),
then (\ref{Hthmp}) is the termwise Fourier transform of
(\ref{Gdefp}).

(3) For all $K$ the inverse of the map $\mathcal {G}\to\mathcal
{G}^*:G\mapsto G^*$ is given by the \lq generalized Fourier
transform\rq\ $H\mapsto H^*(-x)$, where
$$H^*(x)=\lim_{r\to \infty} \int_{|y|_K \leq r} H(y)\Psi(xy)\, |dy|_K
.$$
\end{theorem}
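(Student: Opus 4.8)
We only indicate the strategy; full details are in \cite[Chapter II]{I3}. The idea is to reduce the statement to Theorem \ref{Thm_Igusa_ZF} via the principle that, on $K$, the Fourier transform is conjugate to multiplication by the local $\gamma$-factor under the Mellin transform. Recall from the local functional equation over $K$ (see e.g.\ \cite[VII]{We}) that the Fourier transform of the homogeneous distribution $x\mapsto\omega_s\chi(ac\,x)$ equals $\gamma_K(\omega_s\chi)\,\omega_{-1-s}\chi^{-1}(ac\,x)$, where $\gamma_K$ is meromorphic in $s$ and, on any vertical strip, of controlled growth in $s$ and in $\chi$ (resp.\ in $l$). In particular the Fourier transform interchanges the growth order $|x|_K^{\lambda-1}$ near $0$ with the order $|x|_K^{-\lambda}$ near $\infty$, and, since $\gamma_K$ is finite and nonzero at a generic point, it turns a Mellin pole of order $m$ into one of order $m$, hence $(\ln|x|_K)^{m-1}$ into $(\ln|x|_K)^{m-1}$.

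\emph{Forward direction.} Let $G\in\mathcal G$ and $Z:=M_K(|x|_KG)\in\mathcal Z$. Conditions (i)--(iii) defining $\mathcal G$ imply that $y\mapsto y^jG(y)$ lies in $L^1(K)$ for every $j\geq 0$: Schwartz decay handles infinity, $\operatorname{Re}(\lambda)>0$ makes the singularity at $0$ integrable, and in the $p$-adic case $G$ has compact support. Hence $H:=G^*$ is well defined, lies in $C^\infty(K)$ by differentiation under the integral sign, and is locally constant for a $p$-field. Writing $|x|_KG(x)$ by Mellin inversion as $\frac{1}{2\pi i}\int_{\operatorname{Re}(s)=c}Z(\omega_s\chi(ac))\,\omega_{-s}\chi^{-1}(ac\,x)\,ds$ (summed over $\chi$, with $c>0$ in the convergence strip of $M_K(|x|_KG)$), applying the functional equation termwise, and shifting the contour across the poles $s\in-\Lambda$ of $Z$, the collected residues are precisely the terms of (\ref{Hthm}), resp.\ (\ref{Hthmp}); the coefficients $c_{\lambda,m}$ arise from the $b_{\lambda,m}$ through the Fourier transform on $\{u\mid|u|_K=1\}$ weighted by the Laurent coefficients of $\gamma_K$ at $s=-\lambda$, which is exactly the assertion that (\ref{Hthm}) is the termwise Fourier transform of (\ref{Gdef}). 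The remaining shifted integral has strictly smaller order than the last retained term, and the growth bounds built into $\mathcal Z$ also give termwise differentiability and uniformity in $x/|x|$.

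\emph{Converse, and part (3).} Given $H$ satisfying (i)--(ii), note that $H$ need not be integrable at infinity, since powers $|x|_K^{-\lambda}$ with $0<\lambda\leq 1$ may occur; one therefore defines $G(x):=H^*(-x)$ via the regularized transform $H^*(x)=\lim_{r\to\infty}\int_{|y|_K\leq r}H(y)\Psi(xy)\,|dy|_K$, whose limit exists by integrating by parts in the tail, using the termwise differentiability of (\ref{Hthm}). Running the previous paragraph backwards --- expand $H$ at infinity, Fourier transform term by term, and shift contours on the Mellin side --- yields an expansion (\ref{Gdef}), resp.\ (\ref{Gdefp}), for $G$ near $0$ together with the required regularity and Schwartz decay, so $G\in\mathcal G$ and $H=G^*$. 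Fourier inversion on $K$, applied with the same regularization, then identifies $H\mapsto H^*(-x)$ as the inverse of $G\mapsto G^*$, which is (3).

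The essential technical difficulty throughout is the control of the \emph{remainders} under Fourier transform: one must show that a function which near $0$ is smoother than $|x|_K^{\Lambda_0-1}$, resp.\ near $\infty$ decays faster than $|x|_K^{-\Lambda_0}$, has Fourier transform correspondingly smaller at $\infty$, resp.\ near $0$. This is done by repeated integration by parts, exploiting precisely the termwise-differentiable uniform nature of the expansions and, in the $p$-adic case, local constancy; one must also check that the poles and zeros of $\gamma_K$ do not introduce terms with exponents outside $-\Lambda$, which is where the arithmetic shape of $\Lambda$ dictated by Igusa's setting enters.
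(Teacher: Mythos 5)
The paper does not prove this statement at all: it is quoted verbatim from Igusa \cite[II Theorem 2.1]{I3}, so there is no in-paper argument to compare against. Your sketch faithfully reproduces the strategy of Igusa's actual proof --- the Mellin-transform correspondence of Theorem \ref{Thm_Igusa_ZF} combined with the local functional equation (the factor $w_{\chi}$ of Proposition \ref{Prop Igusa}), contour shifting past the poles in $-\Lambda$ with the growth condition (iii) on $\mathcal{Z}$ controlling the shifted integral in the archimedean case, and finite explicit computations with Gaussian sums in the $p$-adic case --- and correctly identifies the control of remainders under Fourier transform as the technical heart; the only loose point, immaterial for the statement itself, is that the ``termwise Fourier transform'' can annihilate the top logarithmic power when $w_{\chi}(\lambda)=0$, which is exactly the refinement recorded in Proposition \ref{Prop Igusa}(2) and used later in the paper.
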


In order to describe the relation between the coefficients $b$ and
$c$ above, we first introduce some notation. Let $\chi$ be a
character of $\left\{ u\in K^{\times}\mid \left\vert u\right\vert
_{K}=1\right\} $.
We first assume that $K$ is an $\mathbb{R}$-field. For
$\chi=\chi_l$, we denote
\[
w_{\chi}\left( s\right) =i^{\left\vert l\right\vert }\left(
\pi\left[ K:\mathbb{R}\right] \right) ^{\frac{1}{2}\left[
K:\mathbb{R}\right]
\left(  1-2s\right)  }\frac{\Gamma\left(  \frac{1}{2}\left[  K:\mathbb{R}%
\right] s+\frac{\left\vert l\right\vert }{2}\right) }{\Gamma\left(
\frac {1}{2}\left[ K:\mathbb{R}\right] \left( 1-s\right)
+\frac{\left\vert l\right\vert }{2}\right) }\, ,
\]
where $\Gamma(s)$ is the complex Gamma function. When $K$ is a
$p$-field, we put
\[
w_\chi(s)= \frac{1-q^{s-1}}{1-q^{-s}} \quad\text{if } \chi=1
\qquad\text{ and }\qquad w_\chi(s)= g_\chi q^e \chi^s \quad\text{if
} \chi\neq 1 ,
\]
where $e$ is the conductor of $\chi$ and $g_{\chi}$ is a (non-zero)
Gaussian sum (see \cite[page 57]{I3}).

\begin{proposition}{\cite[II Remark 1 page 67]{I3}}\label{Prop Igusa} (1) For all $\lambda$ and $m$, let $b_{\lambda,m,\chi}$ be the Fourier coefficients
of $b_{\lambda,m}\left( u\right) $ in Definitions
\ref{functionclassreal} and \ref{functionclasspadic}, that is, we
have the expansion
\[
b_{\lambda,m}(u) = \sum_{\chi} b_{\lambda,m,\chi}\chi(u),
\]
and similarly
\[
c_{\lambda,m}(u) = \sum_{\chi} c_{\lambda,m,\chi}\chi(u)
\]
for $c_{\lambda,m}(u)$ in Theorem \ref{Thm_Igusa_2}. Here, when $K$
is an $\mathbb{R}$-field, $\chi$ runs over the $\chi_l$ with
$l\in\{0,1\}$ or $l\in \mathbb{Z}$, according as $K$ is $\mathbb{R}$
or $\mathbb{C}$, and when $K$ is a $p$-field, $\chi$ runs over the
characters of $R_K^\times$.

These Fourier coefficients are related as follows:
\begin{equation}\label{relatebtoc}
c_{\lambda,m,\chi^{-1}} = \left( -1\right) ^{m-1}
{\displaystyle\sum\limits_{j=m}^{n_{\lambda}}} \left(
\begin{array}
[c]{c}
j-1\\
m-1
\end{array}
\right)\left( \frac{d^{j-m}}{ds^{j-m}}w_{\chi}\left( \lambda\right)
\right) b_{\lambda,j,\chi}.
\end{equation}
Fixing $\lambda$ and $\chi$, the equation (\ref{relatebtoc})
expresses, for $1\leq m \leq n_\lambda$, the
$c_{\lambda,m,\chi^{-1}}$ linearly in terms of the
$b_{\lambda,m,\chi}$ with a coefficient matrix that is upper
triangular of size $n_\lambda$, with everywhere $w_{\chi}(\lambda)$
on the diagonal, and nonzero multiples of $w'_{\chi}(\lambda)$ just
above the diagonal.

(2) If $w_{\chi}(\lambda)\neq 0$, then the rank of that matrix is
$n_\lambda$ and then the $b_{\lambda,m,\chi}$ can conversely be
expressed in and are determined by the $c_{\lambda,m,\chi^{-1}}$.

On the other hand, if $w_{\chi}(\lambda)= 0$, then
$w'_{\chi}(\lambda)\neq 0$, and hence the rank of that matrix is
$n_\lambda-1$. In this case, $c_{\lambda,n_\lambda,\chi^{-1}}=0$,
the coefficient $b_{\lambda,1,\chi}$ does not appear in the
expression for the $c_{\lambda,m,\chi^{-1}}, 1\leq m \leq n_\lambda
-1$, and the $b_{\lambda,m,\chi}, 2\leq m \leq n_\lambda,$ can be
expressed in and are determined by the $c_{\lambda,m,\chi^{-1}},
1\leq m \leq n_\lambda -1$.

(3) For an $\mathbb{R}$-field $K$ and $\chi=\chi_l$, we have that
$w_{\chi}(s)= 0$ if and only if $s\in 1+\frac{1}{[K:\mathbb{R}]}(
|l| + 2\mathbb{Z}_{\geq 0})$. For a $p$-field $K$, we have that
$w_{\chi}(s)= 0$ if and only if $\chi=1$ and $s= 1 \mod 2\pi i / \ln
q$.

\end{proposition}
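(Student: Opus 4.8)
The plan is to take part~(1) --- the relation $(\ref{relatebtoc})$ and the description of its coefficient matrix --- as the statement imported from \cite[II Remark 1, p.~67]{I3}, and to deduce parts~(2) and~(3) from it. For~(1) one simply reads off $(\ref{relatebtoc})$: fixing $\lambda$ and $\chi$ and viewing it (for $1\leq m\leq n_\lambda$) as expressing the vector $(c_{\lambda,m,\chi^{-1}})_m$ as a matrix times $(b_{\lambda,m,\chi})_m$, the coefficient of $b_{\lambda,j,\chi}$ in the $m$-th row is $0$ when $j<m$ (the sum starts at $j=m$), is $(-1)^{m-1}w_\chi(\lambda)$ when $j=m$, and is $(-1)^{m-1}\binom{m}{m-1}w'_\chi(\lambda)=(-1)^{m-1}m\,w'_\chi(\lambda)$ when $j=m+1$. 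So the matrix is upper triangular of size $n_\lambda$, with $w_\chi(\lambda)$ (up to sign) on the diagonal and a nonzero multiple of $w'_\chi(\lambda)$ just above it.

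I would prove~(3) next, since~(2) hinges on the zeros of $w_\chi$ being \emph{simple}. In the Archimedean case, write $d=[K:\mathbb{R}]$; the factors $i^{|l|}$ and $(\pi d)^{\frac d2(1-2s)}$ are nowhere zero and $\Gamma$ has no zeros, so $w_\chi(s)=0$ exactly where $\Gamma\bigl(\tfrac d2(1-s)+\tfrac{|l|}2\bigr)$ has a pole while $\Gamma\bigl(\tfrac d2 s+\tfrac{|l|}2\bigr)$ does not. The poles of the former occur at $\tfrac d2(1-s)+\tfrac{|l|}2\in-\mathbb{Z}_{\geq0}$, i.e.\ at $s=1+\tfrac1d(|l|+2k)$ with $k\in\mathbb{Z}_{\geq0}$; at such $s$ the argument of the numerator $\Gamma$ equals $\tfrac d2+|l|+k\geq\tfrac12>0$, so that $\Gamma$ is there finite and nonzero --- giving the claimed zero set, with all zeros simple because the poles of $\Gamma$ are. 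For a $p$-field and $\chi\neq1$, $w_\chi(s)=g_\chi q^e\chi^s$ is nowhere zero (the Gauss sum $g_\chi\neq0$, and $\chi^s\neq0$ for every $s$). For $\chi=1$, $w_1(s)=\frac{1-q^{s-1}}{1-q^{-s}}$ vanishes precisely when $q^{s-1}=1$, i.e.\ $s\equiv1\pmod{2\pi i/\ln q}$, where the denominator equals $1-q^{-1}\neq0$; these zeros are again simple.

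Then part~(2) follows from linear algebra on the matrix of~(1). If $w_\chi(\lambda)\neq0$ it is triangular with invertible diagonal, hence of rank $n_\lambda$ and invertible, so the $b_{\lambda,m,\chi}$ are recovered from the $c_{\lambda,m,\chi^{-1}}$ by back substitution. If $w_\chi(\lambda)=0$, then $w'_\chi(\lambda)\neq0$ by~(3), so the matrix is upper triangular with zero diagonal and nonzero superdiagonal: its first column vanishes, while columns $2,\dots,n_\lambda$ each carry a nonzero superdiagonal pivot with zeros below, so the rank is exactly $n_\lambda-1$. Taking $m=n_\lambda$ in $(\ref{relatebtoc})$ gives $c_{\lambda,n_\lambda,\chi^{-1}}=(-1)^{n_\lambda-1}w_\chi(\lambda)\,b_{\lambda,n_\lambda,\chi}=0$; the unknown $b_{\lambda,1,\chi}$ enters only via $j=1$, hence only in the $m=1$ row where its coefficient is $w_\chi(\lambda)=0$, so it does not appear among $c_{\lambda,1,\chi^{-1}},\dots,c_{\lambda,n_\lambda-1,\chi^{-1}}$; and the remaining $(n_\lambda-1)\times(n_\lambda-1)$ system --- expressing $c_{\lambda,m,\chi^{-1}}$ ($1\leq m\leq n_\lambda-1$) in the $b_{\lambda,j,\chi}$ ($2\leq j\leq n_\lambda$) --- has coefficient $(-1)^{m-1}m\,w'_\chi(\lambda)\neq0$ for $b_{\lambda,m+1,\chi}$ and zero coefficient for $b_{\lambda,j,\chi}$ with $j\leq m$, so it is triangular with invertible diagonal after relabeling, and the $b_{\lambda,m,\chi}$ ($2\leq m\leq n_\lambda$) are determined. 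I expect the only genuinely delicate point to be the $\Gamma$-function bookkeeping in~(3) --- verifying that at each candidate zero the numerator $\Gamma$-factor is finite and nonvanishing, which is precisely what pins down the zero set and gives simplicity, the input needed for the rank count in~(2); the rest is routine triangular linear algebra together with standard facts about $\Gamma$ and Gauss sums.
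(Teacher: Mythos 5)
Your proposal is correct and matches how the paper treats this result: the relation (\ref{relatebtoc}) in part (1) is simply imported from Igusa \cite[II Remark 1 page 67]{I3} without proof, and parts (2) and (3) are exactly the routine consequences you describe — the location and simplicity of the zeros of $w_\chi$ via the poles of the denominator $\Gamma$-factor (resp.\ the nonvanishing of the Gauss sum and the elementary computation for $\chi=1$), followed by the triangular linear algebra on the coefficient matrix. Your reading of the matrix (diagonal entries $(-1)^{m-1}w_\chi(\lambda)$, superdiagonal entries $(-1)^{m-1}m\,w'_\chi(\lambda)$, rank drop by exactly one when $w_\chi(\lambda)=0$ because the zeros are simple) is the intended argument.
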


\begin{remark}\label{conductor}\rm
In the classical case of a polynomial/analytic function $f$ (when
$g=1$), an important result in \cite{I3} is that $Z_\Phi(\omega;f)$
belongs to the space $\mathcal {Z}$, assuming that $C_f \cap \supp
\Phi \subset f^{-1}\{0\}$. Then Theorems \ref{Thm_Igusa_ZF} and
\ref{Thm_Igusa_2} lead to the relations between asymptotic
expansions of $E_{\Phi}\left( z;f\right)$ as $\left\vert
z\right\vert _{K}\rightarrow+\infty$, asymptotic expansions of
$F_{\Phi}\left( t;f\right)$ as $\left\vert
t\right\vert_{K}\rightarrow0$, and poles of $Z_\Phi(\omega;f)$.

\smallskip
Now for meromorphic functions $f/g$, the zeta function
$Z_\Phi(\omega;f/g)$ is in general {\em not} in $\mathcal {Z}$, and
restricting the support of $\Phi$ will not remedy that. As an
illustration we indicate the problem when $K$ is a $p$-field: in
general
 $Z_{\Phi}\left( \omega;f/g\right)$ is {\em not} identically zero when the conductor
 of $\chi=\omega \mid_{R_{K}^{\times}}$ is large enough.

A crucial point in the proof of this result in the classical case is
namely that an integral of the form $ \int_{p^e\mathbb{Z}_p}
\chi(1+x) \, |dx|_{\mathbb{Q}_p}$ vanishes as soon as the conductor
of $\chi$ is large enough. In our case, due to the new feature in
Remark \ref{units}, one also has to deal for instance with integrals
of the form
$$\int_{p^e\mathbb{Z}_p} \chi(1+x) \, |x|_{\mathbb{Q}_p}^{\nu-1} \, |dx|_{\mathbb{Q}_p} $$
with $\nu > 1$. And those do {\em not} vanish for $\chi$ with large
enough conductor, as is shown by a straightforward computation.

\smallskip
 In order to derive asymptotic expansions for
$E_{\Phi}\left( z;f/g\right)$, we need a new strategy. We do use
Theorems \ref{Thm_Igusa_ZF} and \ref{Thm_Igusa_2} in the sequel, but
in an appropriate different setting.
\end{remark}


\subsection{Set-up on the resolution space}\label{subsection set-up resolution}

From now on we fix an adapted embedded resolution $\sigma:X_K\to U$
of $D_K$, obtained by resolving the indeterminacies of the map
$f/g$, we use the notation of Theorem \ref{thresolsing} and the
classical notation $\sigma^* \Phi=\Phi \circ \sigma$. We denote by
$\rho: X_K\to \mathbb{P}^1$ the {\em morphism} $(f/g) \circ \sigma$.

Analogously as in Remarks \ref{altern} and \ref{alternreal}, we can
describe
$$E_{\Phi}\left(
z;f/g\right)= \int_{X_K\setminus \sigma^{-1} D_K} (\sigma^*
\Phi)(y)\, \Psi \big(z\cdot \rho(y)\big)\, |\sigma^* dx|_K$$
 as a complex linear combination of local contributions, described
 by \lq elementary\rq\ oscillating integrals in local coordinates
 around appropriately chosen points $b\in X_K$. We first stress the
 following fact.

 \begin{remark}\rm
 A point $b \in X_K$ for
which $N_i=N_{f,i}-N_{g,i}>0$ for at least one $i \in
\{1,\dots,r\}$, satisfies $\rho(b)=0$. Similarly, when $N_i<0$ for
at least one $i$, we have that $\rho(b)=\infty$. And if all $N_i=0$,
then $\rho(b)\in K^\times$. As already mentioned in Remark
\ref{units}, this last case also occurs for points $b$ in
$\sigma^{-1}(D_K)$.
\end{remark}

These three possibilities for $\rho(b)$ yield the following three
types of local contributions. We use local coordinates
$y_1,\dots,y_n$ around $b$, and
 $\Theta$ denotes  the characteristic function of a small
polydisc $B$ around the origin, or a smooth function supported in
the polydisc
\[
B=\left\{ y\in K^{n}\mid\left\vert y_{j}\right\vert_K <1\text{ for
}j=1,\ldots ,n\right\} ,
\]
according as $K$ is a $p$-field or an $\mathbb{R}$-field. When
$\rho(b)=0$ and $\rho(b)=\infty$, we have respectively the types

\begin{equation}
E_{1}\left(  z\right)  :=%
{\displaystyle\int\limits_{K^{n}}} \Theta\left( y\right) \Psi\left(
z \cdot c {\displaystyle\prod_{i=1}^r} y_{i}^{M_{i}}\right) \left(
{\displaystyle\prod_{i=1}^n} \left\vert y_{i}\right\vert_K
^{v_{i}-1}\right) \left\vert dy\right\vert _{K}
\label{E_1}%
\end{equation}
and
\begin{equation}
E_{2}\left( z\right) :=
{\displaystyle\int\limits_{K^{n}\smallsetminus\cup_{i=1}^r\left\{
y_{i}=0\right\} }} \Theta\left( y\right) \Psi\left( z \cdot (c
{\displaystyle\prod_{i=1}^r} y_{i}^{M_{i}})^{-1}\right) \left(
{\displaystyle\prod_{i=1}^n} \left\vert y_{i}\right\vert_K
^{v_{i}-1}\right) \left\vert dy\right\vert _{K},
\label{E_2}%
\end{equation}
where $r$, all $M_i$ and all $v_i$ are positive integers (and $r\leq
n$), and $c$ is a nonzero constant (that can be taken to be $1$ if
$K=\mathbb{C}$, and $1$ or $-1$ if $K=\mathbb{R}$). Note that
possibly some $v_i=1$. When $\rho(b)\in K^\times$, we have the type
\begin{equation}
E_{3}\left( z\right) := {\displaystyle\int\limits_{K^{n}}}
\Theta\left( y\right) \Psi\big( z \cdot u(y) \big) \left(
{\displaystyle\prod_{i=1}^n} \left\vert y_{i}\right\vert_K
^{v_{i}-1}\right)
 \left\vert
dy\right\vert _{K}, \label{E_3}
\end{equation}
where $u(y)$ is invertible on the support of $\Theta$.

\subsection{Some auxiliary expansions}\label{subsection auxexpansions}

In order to study
$$E_{\Phi}\left(
z;f/g\right)= \int_{X_K\setminus \sigma^{-1} D_K} (\sigma^* \Phi)(y)
\, \Psi \big(z\cdot \rho(y)\big)\, |\sigma^* dx|_K$$
 via local contributions,
we now fix an appropriate decomposition of $\sigma^* \Phi$.

 We cover
the (compact) support of $\sigma^* \Phi$ by finitely many polydiscs
$B_j, j\in J,$ as above (all disjoint in the case of $p$-fields),
making sure that the (compact) fibres $\rho^{-1}\{0\}\cap \supp
(\sigma^* \Phi)$ and $\rho^{-1}\{\infty\}\cap \supp (\sigma^* \Phi)$
are completely covered by certain $B_j, j\in J_0 \subset J,$ and
$B_j, j\in J_\infty \subset J,$ with centre $b_j$ mapped by $\rho$
to $0$ and $\infty$, respectively.
 When $K$ is a $p$-field, we define $\Phi_0$ and $\Phi_\infty$
as the restriction of $\sigma^* \Phi$ to $\cup_{j\in J_0}B_j$ and
$\cup_{j\in J_\infty}B_j$, respectively. When $K$ is an
$\mathbb{R}$-field, $\Phi_0$ and $\Phi_\infty$ are these
restrictions, modified by the partition of the unity that induces
the functions $\Theta$ on the local charts $B_j$.

We now define
$$E_{0}\left(
z\right)= \int_{X_K\setminus \sigma^{-1} D_K} (\Phi_0)(y) \,\Psi
\big(z\cdot \rho(y)\big) \,|\sigma^* dx|_K$$ and
$$E_{\infty}\left(
z\right)= \int_{X_K\setminus \sigma^{-1} D_K} (\Phi_\infty)(y)\,
\Psi \big(z\cdot \rho(y)\big)\, |\sigma^* dx|_K .$$

\bigskip
 We first treat the case of oscillatory integrals over $\mathbb{R}$-fields.

\begin{proposition}\label{propE0}  Let $K$ be an $\mathbb{R}$-field.
 We denote by
$m_{\lambda}$ the order of a pole $\lambda $ of $Z_{\Phi}\left(
\omega;f/g \right) $.

(1) Then $E_{0}\left( z\right) $ has an asymptotic expansion of the
form
\begin{equation}
E_{0}\left(  z\right) \approx%
{\displaystyle\sum\limits_{\lambda <0}}\
{\displaystyle\sum\limits_{m=1}^{m_{\lambda}}} A_{\lambda,m}\left(
\frac{z}{\left\vert z\right\vert }\right) \left\vert z\right\vert
_{K}^{\lambda}\left( \ln\left\vert z\right\vert _{K}\right)
^{m-1}\text{ as }\left\vert z\right\vert _{K}\rightarrow \infty,
\label{AsimE0}
\end{equation}
where $\lambda$ runs through all the negative poles of
$Z_{\Phi}\left( \omega;f/g \right) $ (for all $\omega$), and each
$A_{\lambda,m}$ is a smooth function on $\left\{ u\in K^{\times}\mid
\left\vert u\right\vert _{K}=1\right\}$.

(2) Writing $\omega=\omega_s\chi_l(ac)$, we have that
$A_{\lambda,m_\lambda}=0$ if $|\lambda| \in
1+\frac{1}{[K:\mathbb{R}]}( |l| + 2\mathbb{Z}_{\geq 0})$.
\end{proposition}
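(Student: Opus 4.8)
The plan is to reduce everything to the local contributions $E_0(z)$, which by the set-up in Subsection~\ref{subsection set-up resolution} is a finite $\mathbb{C}$-linear combination of elementary integrals of type $E_1(z)$ as in \eqref{E_1}, coming from the charts $B_j$, $j\in J_0$, whose centres $b_j$ are mapped to $0$ by $\rho$. So it suffices to prove, for each such $E_1(z)$, an asymptotic expansion of the shape \eqref{AsimE0} with the stated vanishing of the top coefficient, and then to sum the contributions (which can only lower or keep the order $m_\lambda$, and cannot create exponents outside the list of poles of $Z_\Phi(\omega;f/g)$). First I would treat $E_1(z)$: by Fubini one integrates out the variables $y_i$ with $i>r$ and with $v_i-1=0$ trivially, and one is left with a genuine oscillatory integral in the variables $y_1,\dots,y_r$ actually occurring in the monomial $c\prod y_i^{M_i}$. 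The key observation is that $E_1(z)$ is, up to the harmless extra variables, precisely the kind of monomial oscillatory integral analysed in Igusa's classical theory: its behaviour as $|z|_K\to\infty$ is governed, via the Mellin--Fourier machinery of Theorems~\ref{Thm_Igusa_ZF} and~\ref{Thm_Igusa_2}, by the poles of the associated monomial zeta function $\prod_{i=1}^r \int_{|y_i|_K<1} \omega^{M_i}(y_i)\,|y_i|_K^{v_i-1}\,|dy_i|_K$, whose poles are at $s=-v_i/M_i$ (mod the appropriate lattice) by Lemma~\ref{lemma3} (or its $\mathbb{R}$-field analogue). These are exactly the negative poles contributed by $E_i$ to $Z_\Phi(\omega;f/g)$, so the exponents $\lambda<0$ and the orders $m_\lambda$ match.

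The crucial point for part~(2) is the behaviour of the diagonal of the transition matrix in Proposition~\ref{Prop Igusa}. Concretely: the monomial zeta function above lies in the space $\mathcal{Z}$ (for the finite set $\Lambda$ of relevant positive numbers, after the substitution $s\mapsto -s$ that turns negative poles into positive ones), hence by Theorem~\ref{Thm_Igusa_ZF} corresponds to a function $|x|_K G\in |x|_K\mathcal{G}$, and the Fourier transform $G^*\in\mathcal{G}^*$ of $G$ supplies, by Theorem~\ref{Thm_Igusa_2}, precisely an asymptotic expansion of the form \eqref{Hthm} as $|x|_K\to\infty$, whose coefficients $c_{\lambda,m,\chi^{-1}}$ are related to the $b_{\lambda,m,\chi}$ by \eqref{relatebtoc}. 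Since the relevant $E_1(z)$ is (a variant of) $G^*$ evaluated at $z$, its expansion is \eqref{Hthm}, and the coefficient $A_{\lambda,m_\lambda}$ of the top-order term is, by the upper-triangular structure in Proposition~\ref{Prop Igusa}(1), a nonzero multiple of $w_\chi(\lambda)\,b_{\lambda,n_\lambda,\chi}$. By Proposition~\ref{Prop Igusa}(2)--(3), $w_\chi(\lambda)=0$ exactly when (writing $\chi=\chi_l$ and remembering we passed to $-\lambda$) $|\lambda|\in 1+\frac{1}{[K:\mathbb{R}]}(|l|+2\mathbb{Z}_{\ge0})$, and in that case $c_{\lambda,n_\lambda,\chi^{-1}}=0$, i.e.\ the top coefficient vanishes. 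Summing over the finitely many charts $j\in J_0$ preserves this vanishing.

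The main obstacle I expect is bookkeeping rather than conceptual: one must carefully match the twist $\omega=\omega_s\chi_l(ac)$ appearing in $Z_\Phi(\omega;f/g)$ with the characters $\chi$ arising locally in each chart (the constant $c$ in \eqref{E_1} contributes a factor $\omega(c)$ that must be absorbed into the coefficients but does not affect the location or order of poles, nor the vanishing in~(2)), and one must check that the extra \lq passive\rq\ variables $y_i$ ($i>r$, or $v_i=1$) and the smooth cutoff $\Theta$ do not spoil membership in the Igusa function spaces --- this is exactly the content of the classical arguments in \cite[I, II]{I3} applied chart by chart, the only subtlety being the new feature (Remark~\ref{units}) that some $v_i>1$ even when the corresponding exponent in the monomial is trivial, which however only enriches the set $\Lambda$ of allowed exponents and is harmless here. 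A secondary point is to argue that combining the finitely many local expansions yields a single expansion over the poles of the \emph{global} $Z_\Phi(\omega;f/g)$ with the correct orders $m_\lambda$; this follows since each pole of the global zeta function is a pole of at least one local monomial factor of the same or higher order, and cancellations between charts can only decrease orders, never produce new exponents.
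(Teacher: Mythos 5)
Your plan is essentially sound and rests on the same engine as the paper's proof: Igusa's Mellin--Fourier correspondence (Theorems \ref{Thm_Igusa_ZF}, \ref{Thm_Igusa_2} and Proposition \ref{Prop Igusa}) applied to the locally monomial map $\rho$ on the resolution space, together with the observation that the new feature of Remark \ref{units} ($M_i=0$ while $v_i>1$) is harmless on the charts over $\rho^{-1}\{0\}$ because at least one $M_i$ is positive there. The organizational difference is that you apply the correspondence chart by chart to each elementary integral $E_1(z)$ and then sum, whereas the paper introduces a single \emph{global} zeta function $Z_0(\omega;\rho)=\int \Phi_0\,\omega(\rho)\,|\sigma^*dx|_K$, proves (by exactly your local analysis) that it lies in $\mathcal{Z}$, and then applies the correspondence once via the global fibre integral $F_0(t)=\int_{\rho=t}\Phi_0\,|\sigma^*dx/d\rho|_K$, of which $E_0$ is the Fourier transform. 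The global formulation buys you two things for free that your version must supply by hand. First, the claim that summing the chart expansions ``never produces new exponents'': a pole of a local monomial zeta function may cancel in the global sum, and you must then argue that the corresponding term also cancels in the summed oscillatory expansion; your stated justification (each global pole is a local pole of at least the same order) addresses the wrong inclusion. The gap is fillable --- the coefficients $c_{\lambda,m,\chi^{-1}}$ are determined \emph{linearly} from the Laurent coefficients of the zeta function through (\ref{relatebtoc}), so vanishing of the global Laurent coefficients forces vanishing of the summed oscillatory coefficients --- but this is precisely the content of working with $Z_0$ and $F_0$ directly. Second, for $\mathbb{R}$-fields the cutoff $\Theta$ is a general smooth function, not a product, so one cannot literally ``integrate out'' the passive variables by Fubini; the fibre-integral formulation absorbs them correctly. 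Two small imprecisions: a factor with $M_i=0$ and $v_i>1$ contributes a constant independent of $\omega$, so it does not ``enrich $\Lambda$'' (it contributes no poles at all); and your identification of $A_{\lambda,m_\lambda}$ as a nonzero multiple of $w_\chi(|\lambda|)\,b_{\lambda,m_\lambda,\chi}$, hence part (2), is correct and matches the paper's appeal to Proposition \ref{Prop Igusa}(2)--(3).
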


\begin{proof}
(1) We apply the ideas of Igusa's theory in \cite[III]{I3} to the
function $\rho$ on $X_K$ (in fact rather to the $K$-valued
restriction of $\rho$ to $X_K\setminus \rho^{-1}\{\infty\}$) and the
smooth function with compact support $\Phi_0$. Actually, Igusa
formulates everything for polynomial functions, but his results are
also valid for analytic functions on smooth manifolds, since his
arguments are locally analytic on an embedded resolution space.

A crucial condition for his arguments to work is that $C_\rho \cap
\operatorname{supp} \Phi_0 \subset \rho^{-1}\{0\}$, where $C_\rho$
denotes the critical locus of $\rho$. This condition is
satisfied by our choice of $\Phi_0$ and because $\rho$ is locally
monomial.

\smallskip
\noindent {\sc Claim.} {\em The function $Z_0$ on
$\Omega(K^\times)$, defined by
$$
Z_0(\omega;\rho) = \int_{X_K\setminus \sigma^{-1}D_K} \Phi_0(y)\,
\omega\big(\rho(y)\big)\, |\sigma^* dx|_K
$$
for $\omega \in \Omega_{(0,\infty)}(K^\times)$, and extended to
$\Omega(K^\times)$ by meromorphic continuation, belongs to the class
$\mathcal {Z}$ of Definition \ref{functionclassreal}.}

\smallskip
\noindent {\em Proof of the claim.} The integral defining
$Z_0(\omega;\rho)$ is locally of the form
$$
{\displaystyle\int\limits_{K^{n}}} \Theta\left( y\right)
\omega\left( c {\displaystyle\prod_{i=1}^r} y_{i}^{M_{i}}\right)
\left( {\displaystyle\prod_{i=1}^n} \left\vert y_{i}\right\vert_K
^{v_{i}-1}\right) \left\vert dy\right\vert _{K}.
$$
This is precisely the local form that Igusa used to prove the
analogue of the claim in \cite[III \S4]{I3}, up to one important
difference. In his case $M_i>0$ as soon as $v_i>1$. In our case it
is possible that $M_i=0$ while $v_i>1$. This is however not a
problem here because, in the polydiscs $B_j, j\in J_0,$ that we
consider, always $M_i>0$ for {\em at least one} $i$, and this is
precisely what is needed in Igusa's argument.

\smallskip
We now relate in the usual way $Z_0(\omega;\rho)$ to $E_0(z)$
through the fibre integral
$$
F_0(t):= \int_{\rho(y)=t} \Phi_0(y) \left|\frac{\sigma^* dx}{d\rho}\right|_K,
$$
for $t\in K^\times$. We have that
 $$Z_0(\omega;\rho) =
\int_{K^\times} |t|_K F_0(t) \omega(t) \frac{|dt|_K}{|t|_K}$$ is the
Mellin transform of $m_K |t|_K F_0(t)$, where $m_K=2$ for
$K=\mathbb{R}$ and $m_K=2\pi$ for $K=\mathbb{C}$. On the other hand,
$E_0(z)=\int_{K^\times} F_0(t) \Psi(z\cdot t) |dt|_K$ is the Fourier
transform of $F_0(t)$.

 Now, since $Z_0(\omega;\rho)$ belongs to $\mathcal {Z}$, we have by Theorems
\ref{Thm_Igusa_ZF} and \ref{Thm_Igusa_2} that the asymptotic
expansion for $E_0(z)$ at infinity is obtained from the asymptotic
expansion of $F_0(t)$ at zero, by computing its Fourier transform
termwise.
 Here however the
$\lambda\in \Lambda$ are the poles of $Z_0(\omega;\rho)$ (for all
$\omega$). But by the definition of $\Phi_0$ and the explanation in
Remark \ref{altern}, these are precisely the negative poles of
$Z_\Phi(\omega;f/g)$ (for all $\omega$).

(2) As in the classical case, this follows from \cite[I Theorems 4.2
and 4.3]{I3} and Proposition \ref{Prop Igusa}.
\end{proof}


\begin{proposition}
\label{PropEinfty} Let $K$ be an $\mathbb{R}$-field; we put $d_K:=1$
if $K=\mathbb{R}$ and $d_K:=1/2$ if $K=\mathbb{C}$. We denote by
$m_{\lambda}$ the order of a pole $\lambda $ of $Z_{\Phi}\left(
\omega;f/g \right) $. Then $E_{\infty}\left( z\right) $ has an
asymptotic expansion of the form
\begin{equation}
E_{\infty}\left( z\right) -C\approx
{\displaystyle\sum\limits_{\lambda >0}}\
{\displaystyle\sum\limits_{m=1}^{m_{\lambda}+\delta_{\lambda}}}
A_{\lambda,m}\left( \frac{z}{\left\vert z\right\vert }\right)
\left\vert z\right\vert _{K}^{\lambda}\left( \ln\left\vert
z\right\vert _{K}\right) ^{m-1}\text{ as }\left\vert z\right\vert
_{K}\rightarrow0, \label{Asim_E_1}
\end{equation}
where $C=E_\infty(0)$ is a constant, $\lambda$ runs through
$d_K\mathbb{Z}_{>0}$ and all the positive poles of $Z_{\Phi}\left(
\omega;f/g \right) $ (for all $\omega$) that are not in
$d_K\mathbb{Z}_{>0}$; furthermore $\delta_{\lambda}=0$ if $\lambda
\notin d_K\mathbb{Z}_{>0}$
 and $\delta_{\lambda}=1$ if $\lambda\in d_K\mathbb{Z}_{>0}$. When $\lambda$ is not a
pole of $Z_{\Phi}\left( \omega;f/g \right) $, we put
$m_{\lambda}=0$.
 Finally, each $A_{\lambda,m}$ is a
smooth function on $\left\{ u\in K^{\times}\mid \left\vert
u\right\vert _{K}=1\right\}$.
\end{proposition}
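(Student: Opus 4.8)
The plan is to imitate the strategy of Proposition \ref{propE0}, but first pushing the troublesome fibre $\rho^{-1}\{\infty\}$ to the origin via the inversion $t\mapsto 1/t$. Using the resolution $\sigma$ and the decomposition of Subsection \ref{subsection auxexpansions}, I would first write
$E_{\infty}(z)=\int_{K^{\times}}F^{\rho}_{\infty}(t)\,\Psi(z\cdot t)\,|dt|_{K}$,
where $F^{\rho}_{\infty}(t)=\int_{\rho(y)=t}\Phi_{\infty}(y)\,|\sigma^{\ast}dx/d\rho|_{K}$ is the fibre integral of the morphism $\rho=(f/g)\circ\sigma$ restricted to $\Phi_{\infty}$. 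Since $\Phi_{\infty}$ is supported in a small neighbourhood of $\rho^{-1}\{\infty\}$, the function $h:=1/\rho$ is $K$-analytic there, $h^{-1}\{0\}=\rho^{-1}\{\infty\}\cap\supp\Phi_{\infty}$, and --- exactly as in the proof of Proposition \ref{propE0}, because $h$ is locally a monomial times a unit by part (2) of Theorem \ref{thresolsing} --- one has $C_{h}\cap\supp\Phi_{\infty}\subset h^{-1}\{0\}$. Igusa's classical theory (recalled in Remark \ref{conductor}) then gives $\tilde F:=F_{\Phi_{\infty}}(\cdot\,;h)\in\mathcal G$, with expansion \eqref{Gdef} whose exponents and orders are those of the poles of $Z_{\Phi_{\infty}}(\omega;h)$; and a local computation as in Remarks \ref{altern} and \ref{alternreal} (using that near $\rho^{-1}\{\infty\}$ all $N_{i}\le0$) identifies, with matching orders $m_{\lambda}$, the poles of $Z_{\Phi_{\infty}}(\omega;h)$ with the negatives of the positive poles of $Z_{\Phi}(\omega;f/g)$ (for all $\omega$).

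Next, a direct computation with Gel'fand--Leray forms gives $F^{\rho}_{\infty}(t)=|t|_{K}^{-2}\,\tilde F(1/t)$. Hence $F^{\rho}_{\infty}$ vanishes identically near the origin, is $C^{\infty}$ on $K$, and --- termwise from \eqref{Gdef} --- has at infinity an expansion of the form \eqref{Hthm}, with exponents $-\lambda$ for $\lambda$ running through $\{1+\mu\mid\mu\text{ a positive pole of }Z_{\Phi}(\omega;f/g)\}$ and with the corresponding orders $m_{\mu}$; so $F^{\rho}_{\infty}\in\mathcal G^{\ast}$. Since $F^{\rho}_{\infty}\in L^{1}$, its Fourier transform (in the sense of Theorem \ref{Thm_Igusa_2}(3)) is the ordinary one and equals $E_{\infty}$; so by Theorems \ref{Thm_Igusa_ZF} and \ref{Thm_Igusa_2} there is a unique $G\in\mathcal G$ with $G^{\ast}=F^{\rho}_{\infty}$, and $E_{\infty}(z)=G(-z)$. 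Writing $C=E_{\infty}(0)=\int_{K^{\times}}\tilde F(w)\,|dw|_{K}$ for the constant term of the expansion of $G$ at $0$ (there is no logarithm there, since $E_{\infty}$ is bounded near $z=0$), the function $E_{\infty}(z)-C$ inherits from $G$ an asymptotic expansion of the shape \eqref{Gdef} with smooth angular coefficients, and it only remains to determine which exponents $\lambda>0$ occur and with what multiplicity.

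This last point is the heart of the matter and the source of the factor $\delta_{\lambda}$. By Proposition \ref{Prop Igusa}, an exponent $\lambda-1$ of $G$ at which $w_{\chi}(\lambda)\neq0$ is transported to $G^{\ast}=F^{\rho}_{\infty}$ with the same order, so such $\lambda-1$ must already appear among the numbers $\mu$ above, i.e.\ $\lambda-1$ is a positive pole of $Z_{\Phi}(\omega;f/g)$; whereas at the exponents with $w_{\chi}(\lambda)=0$ --- by Proposition \ref{Prop Igusa}(3) precisely those with $\lambda\in1+\tfrac1{[K:\mathbb R]}(|l|+2\mathbb Z_{\ge0})$, equivalently $\lambda-1\in d_{K}\mathbb Z_{\ge0}$ --- the coefficient matrix has rank one less, the top Laurent coefficient of $F^{\rho}_{\infty}$ is forced to vanish, and the order of $\lambda-1$ in $G$ exceeds that of $-\lambda$ in $F^{\rho}_{\infty}$ by exactly one (and such $\lambda-1$ can occur in $G$ even when it is not a pole of $Z_{\Phi}(\omega;f/g)$). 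Combining the two cases gives that $E_{\infty}(z)-C$ has an expansion indexed by $\lambda>0$ running through $d_{K}\mathbb Z_{>0}$ together with the positive poles of $Z_{\Phi}(\omega;f/g)$ not lying in $d_{K}\mathbb Z_{>0}$, the multiplicity at $\lambda$ being at most $m_{\lambda}$ when $\lambda\notin d_{K}\mathbb Z_{>0}$ and at most $m_{\lambda}+1$ when $\lambda\in d_{K}\mathbb Z_{>0}$ (with $m_{\lambda}=0$ if $\lambda$ is not a pole), which is the assertion. I expect the main obstacle to be exactly this compatibility argument with Proposition \ref{Prop Igusa} --- in particular that the order can rise by at most one, and only along the progressions $d_{K}\mathbb Z_{>0}$ --- together with the careful verification that $F^{\rho}_{\infty}$ genuinely lies in $\mathcal G^{\ast}$ (smoothness at $0$ and the precise exponent set), the remaining steps being routine once the classical results of \cite{I3} and the identification of poles from Theorem \ref{Theorem2} are in place.
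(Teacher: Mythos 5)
Your proof is correct and follows essentially the same route as the paper's: inverting $\rho$ so that the fibre $\rho^{-1}\{\infty\}$ sits over the origin, showing the fibre integral $\tilde F=F^{\#}$ lies in $\mathcal G$ and hence $F^{\rho}_{\infty}(t)=|t|_{K}^{-2}\tilde F(1/t)$ lies in $\mathcal G^{*}$, and then reading off the expansion of $E_{\infty}$ from Theorems \ref{Thm_Igusa_ZF} and \ref{Thm_Igusa_2} together with Proposition \ref{Prop Igusa}, which is exactly where the extra exponents in $d_{K}\mathbb{Z}_{>0}$ and the shift $\delta_{\lambda}$ arise. Two small wording points: the order at $\lambda\in d_{K}\mathbb{Z}_{\geq 0}$ rises by \emph{at most} one (as your final sentence correctly states), and to conclude that the $\lambda=0$ coefficient is a genuine constant rather than a nonconstant angular function you need continuity of $E_{\infty}$ at $0$ (which holds by dominated convergence), not mere boundedness --- this replaces the paper's Fourier-coefficient argument via $w_{\chi_{l}}(1)$.
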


\begin{proof} Firstly, we note that the positive poles of $Z_{\Phi}\left(
\omega;f/g \right) $ are precisely the poles of
$$
\int_{X_K\setminus \sigma^{-1}D_K} \Phi_\infty(y)\,
\omega\big(\rho(y)\big)\, |\sigma^* dx|_K = \int_{X_K\setminus
\sigma^{-1}D_K} \Phi_\infty(y)\, \omega^{-1}\big((1/\rho)(y)\big) \,
|\sigma^* dx|_K.
$$
 We consider $1/\rho$ as a $K$-valued function on $X_K\setminus
\rho^{-1}\{0\}$, and we apply the intermediate results in the proof
of Proposition \ref{propE0} to it. More precisely, replacing in that
proof $\rho$ by $1/\rho$, $\Phi_0$ by $\Phi_\infty$ and $\omega$ by
$\omega^{-1}$, we derive that the function
$$F^{\#}(t):=  \int_{(1/\rho)(y)=t} \Phi_\infty(y) \left|\frac{\sigma^* dx}{d(1/\rho)}\right|_K,
$$
for $t\in K^\times$, belongs to the class $\mathcal {G}$, and that
it is related by the Mellin transformation to the function
$Z_\infty$ on $\Omega(K^\times)$, defined by
$$
Z_\infty(\omega;1/\rho) = \int_{X_K} \Phi_\infty(y) \,
\omega\big((1/\rho)(y)\big)\, |\sigma^* dx|_K
$$
for $\omega \in \Omega(K^\times)$, which belongs to the class
$\mathcal {Z}$.   In particular, $F^{\#}$  has the termwise differentiable and uniform asymptotic expansion
\begin{equation}
F^{\#}\left( t\right) \approx \sum_{\lambda\in\Lambda}
\sum_{m=1}^{m_\lambda} b_{\lambda,m}\left( \frac{t}{\left\vert
t\right\vert }\right) \left\vert t\right\vert _{K}^{\lambda-1}\left(
\ln\left\vert t\right\vert _{K}\right) ^{m-1}\text{ as }\left\vert
t\right\vert_{K}\rightarrow0,\label{Fsharp}
\end{equation}
where the $b_{\lambda,m}$ are smooth functions on $\left\{ u\in
K^{\times}\mid \left\vert u\right\vert _{K}=1\right\} $, and where
$\{-\lambda \mid \lambda\in \Lambda\}$ is the set of poles of
$Z_\infty(\omega;1/\rho)$ (for all $\omega$). Note that if
$-\lambda$ is a (necessarily negative) pole of
$Z_\infty(\omega;1/\rho)$, it corresponds to the (positive) pole
$\lambda$ of $Z_\infty(\omega^{-1};1/\rho)$, and thus to the
positive pole $\lambda$ of $Z_{\Phi}\left( \omega;f/g \right) $.

Next, we consider the fibre integral
$$F_\infty(u) :=\int_{\rho(y)=u} \Phi_\infty(y) \left|\frac{\sigma^*
dx}{d\rho}\right|_K,
$$
for $u\in K^\times$. Clearly,
\begin{equation}\label{Einfty-Finfty} E_\infty(z)
=\int_{K^\times} \Psi(z\cdot u)\, F_\infty(u)\,|du|_K,
\end{equation}
and hence it is the Fourier transform of $F_\infty(u)$. Also, we
have, with the change of coordinates $t=1/u$, that
$F_\infty(u)=|u|_K^{-2} F^{\#}(1/u)$. Indeed, since $\sigma^* dx$ is
equal to both
$$\frac{\sigma^*dx}{d\rho} \wedge d\rho \qquad\text{and}\qquad \frac{\sigma^*dx}{d(1/\rho)} \wedge
d(1/\rho) = -\frac{1}{\rho^2}\frac{\sigma^*dx}{d(1/\rho)} \wedge
d\rho ,
$$
and the restriction of the Gel'fand-Leray form to any fibre $\rho=u$
is unique, we have
$$
|-\rho^2|_K \left|\frac{\sigma^* dx}{d\rho}\right|_K =
\left|\frac{\sigma^* dx}{d(1/\rho)}\right|_K .
$$
 Since $F^{\#}$ is a
Schwartz function at infinity, we can extend $F_\infty$ to
 a function on $K$ by declaring $F_\infty(0):=0$,
satisfying $\left. \frac{d^{k}}{du^{k}}F_\infty(u)\right\vert
_{u=0}=0$ for any $k$. So

\smallskip

\noindent (i) $F_\infty\in C^{\infty }\left( K\right) $;

\noindent (ii) we have the termwise differentiable and uniform asymptotic expansion
\begin{equation}
\begin{aligned}
F_\infty(u)&\approx |u|_K^{-2}\ {\displaystyle\sum\limits_{\lambda}}
{\displaystyle\sum\limits_{m=1}^{m_{\lambda}}} b_{\lambda,m}\left(
\left( \frac{u}{\left\vert u\right\vert }\right) ^{-1}\right)
\left\vert u\right\vert _{K}^{-\lambda +1}\left( \ln\left\vert
u^{-1}\right\vert _{K}\right)
^{m-1}\\
&\approx
{\displaystyle\sum\limits_{\lambda}}
{\displaystyle\sum\limits_{m=1}^{m_{\lambda}}} \left( -1\right)
^{m-1}b_{\lambda,m}\left( \left( \frac{u}{\left\vert u\right\vert
}\right) ^{-1}\right) \left\vert u\right\vert _{K}^{-\lambda
-1}\left( \ln\left\vert u\right\vert _{K}\right)
^{m-1}\label{Fkey_Exp}
\end{aligned}
\end{equation}
as $\left\vert u\right\vert _{K}\rightarrow+\infty$.

Hence $F_\infty$ belongs to the space $\mathcal {G}^*$ of Theorem
\ref{Thm_Igusa_2}.
Then, by (\ref{Einfty-Finfty}), Theorem \ref{Thm_Igusa_2} and
Proposition \ref{Prop Igusa}, we obtain

\smallskip

\noindent (i) $E_{\infty}\in C^{\infty }\left( K^{\times}\right) $;

\noindent (ii) $E_{\infty}$ behaves like a Schwartz function of $z$ as
$\left\vert z\right\vert _{K}$ tends to infinity;

\noindent (iii) we have the asymptotic expansion
\begin{equation}
E_{\infty}(z)\approx {\displaystyle\sum\limits_{\lambda}} \text{ }
{\displaystyle\sum\limits_{m=1}^{m_{\lambda}+\delta_{\lambda}}}
A_{\lambda,m}\left( \frac{z}{\left\vert z\right\vert }\right)
\left\vert z\right\vert _{K}^{\lambda}\left( \ln\left\vert
z\right\vert _{K}\right) ^{m-1}\text{ as }\left\vert z\right\vert
_{K}\rightarrow0,\label{E2Exp}
\end{equation}
where $\lambda$ runs over $\Lambda \cup d_K\mathbb{Z}_{\geq 0}$.
More precisely, we have the following. By Proposition \ref{Prop
Igusa}(3), $w_\chi(\lambda+1)$ can only be zero if $\lambda\in
d_K\mathbb{Z}_{\geq 0}$. Hence, if $\lambda \notin
d_K\mathbb{Z}_{\geq 0}$, or if $\lambda \in d_K\mathbb{Z}_{\geq 0}$
and $m\leq m_\lambda$, the coefficients $A_{\lambda,m}$ in
(\ref{E2Exp}) are explicitly determined by the coefficients
$b_{\lambda,m}$ in (\ref{Fkey_Exp}). But for $\lambda \in
d_K\mathbb{Z}_{\geq 0}$, the coefficients $A_{\lambda,m_\lambda+1}$
are not determined by (\ref{Fkey_Exp}), and can be nonzero.

 In particular, $|z|_K^0$ can appear with nonzero
coefficient $A_{0,1}$ in (\ref{E2Exp}). We claim that this
coefficient must be a constant. Indeed, again by Proposition
\ref{Prop Igusa}(3), $w_{\chi_l}(1)=0$ if and only if
$1=1+\frac1{[K:\mathbb{R}]}(|l|+2k)$ for some $k\in \mathbb{Z}_{\geq
0}$. This can only occur when $l=0$. Hence, writing $A_{0,1,\chi_l}$
for the Fourier coefficients of $A_{0,1}$, i.e.,
$$ A_{0,1}= \sum_l A_{0,1,\chi_l} \chi_l ,$$
we obtain from (\ref{relatebtoc}) that $A_{0,1,\chi_l}=0$ for $l\neq
0$. Since $\chi_0$ is the trivial character, we conclude that
$A_{0,1}$ is the constant function $A_{0,1,\chi_0}$.

This is consistent with the fact that we should have a constant in
the expansion of $E_{\infty}(z)$: from its definition it should be
$E_{\infty}(0)=\int_{X_K\setminus \sigma^{-1} D_K}
(\Phi_\infty)(y)\, |\sigma^* dx|_K$. Note that, in the formulation
of the theorem, we put this constant on the left hand side; for this
reason we must consider only $\lambda>0$ in the asymptotic
expansion.
\end{proof}


\begin{remark} \rm  Related to the last part of the proof above:
according to \cite[II Theorem 3.1]{I3}, $E_{\infty}(0)$ exists if
and only if $F_{\infty}\in L^1(K)$. This last assertion is indeed
true, since the powers of $|u|_K$ in the expansion (\ref{Fkey_Exp})
are (strictly) smaller than $-1$.
\end{remark}

\bigskip
The treatment of oscillatory integrals over $p$-fields is completely
similar.

\begin{proposition}
\label{Prop1p} Let $K$ be a $p$-field. We denote by $m_{\lambda}$
the order of a pole $\lambda $ of $Z_\Phi\left( \omega;f/g\right)$.

(1) Then $E_0\left( z\right) $ has an expansion of the form
\begin{equation}
E_0\left( z\right) = {\displaystyle\sum\limits_{\lambda <0}}\
{\displaystyle\sum\limits_{m=1}^{m_{\lambda}}} A_{\lambda,m}\left(
ac\ z\right) \left\vert z\right\vert _{K}^{\lambda}\left(
\ln\left\vert z\right\vert _{K}\right) ^{m-1} \label{Asim_E_1}
\end{equation}
for sufficiently large $|z|_K$, where $\lambda$ runs through all the
poles$\mod 2\pi i / \ln q$ with negative real part of $Z_\Phi\left(
\omega;f/g\right)$ (for all $\omega$), and each $A_{\lambda,m}$ is a
locally constant function on $R_K^\times$.

(2) Writing $\omega=\omega_s\chi(ac)$, we have that
$A_{\lambda,m_\lambda}=0$ if $\chi=1$ and $\lambda= -1\mod 2\pi i /
\ln q$.
\end{proposition}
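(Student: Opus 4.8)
The plan is to mirror the proof of Proposition \ref{propE0}, replacing its Archimedean ingredients by the $p$-field versions: Definition \ref{functionclasspadic}, Theorems \ref{Thm_Igusa_ZF} and \ref{Thm_Igusa_2}, and Proposition \ref{Prop Igusa}. As there, one applies Igusa's method from \cite[III]{I3} to the morphism $\rho=(f/g)\circ\sigma$ on $X_K$ --- more precisely to its $K$-valued restriction to $X_K\smallsetminus\rho^{-1}\{\infty\}$ --- together with the locally constant, compactly supported function $\Phi_0$. The part played in the $\mathbb{R}$-field argument by the condition $C_\rho\cap\supp\Phi_0\subset\rho^{-1}\{0\}$ is here played by the fact that $\Phi_0$ is, by construction, supported near $\rho^{-1}\{0\}$ and that $\rho$ is locally monomial on the resolution; note that over a $p$-field the conclusion (\ref{Asim_E_1}) is an exact identity for $|z|_K$ large, not merely an asymptotic expansion.

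The crucial step is the claim that the function $Z_0$ on $\Omega(K^\times)$, defined by $Z_0(\omega;\rho)=\int_{X_K\smallsetminus\sigma^{-1}D_K}\Phi_0(y)\,\omega(\rho(y))\,|\sigma^\ast dx|_K$ for $\omega\in\Omega_{(0,\infty)}(K^\times)$ and extended by meromorphic continuation, belongs to the class $\mathcal Z$ of Definition \ref{functionclasspadic}. To prove this I would cover $\supp\Phi_0$ by finitely many polydiscs and decompose $Z_0$ into a finite sum of local contributions whose integrand is of the $E_1$-type (\ref{E_1}); exactly as in Remark \ref{altern}, each factors into one-dimensional integrals $\int_{c_i+\mathfrak p^{e}R_K\smallsetminus\{0\}}\omega^{M_i}(y_i)\,|y_i|_K^{v_i-1}\,|dy_i|_K$ with all $M_i>0$. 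Lemma \ref{lemma3} then gives, on the one hand, a denominator $\prod_i(1-q^{-v_i-M_is})$, hence the partial-fraction shape of Definition \ref{functionclasspadic}(2)(i) with candidate poles of the form $s=-v_i/M_i+\frac{2\pi\sqrt{-1}}{M_i\ln q}k$ ($k\in\mathbb Z$) of order at most $n$; and, on the other hand, the vanishing of each factor unless $\chi^{M_i}$ is trivial (resp.\ trivial on a subgroup $1+\mathfrak p^{e}c_i^{-1}R_K$), which produces the conductor bound of Definition \ref{functionclasspadic}(2)(ii). This is exactly Igusa's computation; the one delicate point, relative to the classical case, is the new feature of Remark \ref{units} that $M_i=0$ with $v_i>1$ may occur --- but, thanks to the choice of $\Phi_0$, this never happens in the charts at hand, since there every index appearing in $\rho$ has positive exponent, which is precisely what Igusa's argument needs.

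Granting the claim, I would relate $Z_0$ to $E_0$ via the Gel'fand--Leray fibre integral $F_0(t)=\int_{\rho(y)=t}\Phi_0(y)\,|\sigma^\ast dx/d\rho|_K$, $t\in K^\times$: one has that $Z_0(\omega;\rho)$ is the Mellin transform of $m_K|t|_KF_0(t)$, with $m_K=1-q^{-1}$, and that $E_0(z)=\int_{K^\times}F_0(t)\,\Psi(z\cdot t)\,|dt|_K$ is the Fourier transform of $F_0$. Since $Z_0\in\mathcal Z$, Theorem \ref{Thm_Igusa_ZF} yields $F_0\in\mathcal G$, hence the expansion (\ref{Gdefp}) of $F_0$ near $t=0$, and Theorem \ref{Thm_Igusa_2} then gives the expansion of $E_0$ for large $|z|_K$ as the termwise Fourier transform of that of $F_0$ --- which is precisely (\ref{Asim_E_1}), with the $A_{\lambda,m}$ locally constant on $R_K^\times$. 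To pin down the exponents one uses, as in Remark \ref{altern} together with Theorem \ref{Theorem1}, that the poles of $Z_0(\omega;\rho)$ over all $\omega$ are exactly the poles of $Z_\Phi(\omega;f/g)$ with negative real part --- those coming from components $E_i$ with $N_i>0$, i.e.\ mapped by $\rho$ to $0$ --- with the same orders $m_\lambda$. Finally, part (2) follows from the relation (\ref{relatebtoc}) of Proposition \ref{Prop Igusa}, exactly as for Proposition \ref{propE0}(2): the top-order coefficient $A_{\lambda,m_\lambda}$ is a nonzero multiple of $w_\chi(-\lambda)$ times a coefficient in the expansion of $F_0$, and by Proposition \ref{Prop Igusa}(2)--(3) it vanishes precisely when $w_\chi(-\lambda)=0$, i.e.\ when $\chi=1$ and $-\lambda\equiv 1$, that is $\lambda\equiv-1\bmod 2\pi i/\ln q$.

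The main obstacle is the claim, and inside it the conductor bound of Definition \ref{functionclasspadic}(2)(ii): this is the only place where a specifically $p$-adic phenomenon (vanishing of twisted coset integrals) is invoked, and one must check that Igusa's classical bound is not destroyed in the meromorphic setting. It survives --- but only because $\Phi_0$ was arranged so that the obstructing configuration $M_i=0<v_i-1$ of Remarks \ref{units} and \ref{conductor} never appears; for a general test function, or for the components mapped to $\infty$, the conclusion would genuinely fail, which is exactly why one isolates $\Phi_0$ here and treats $\Phi_\infty$ separately.
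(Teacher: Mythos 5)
Your proposal follows the same route as the paper: the paper's own proof of Proposition \ref{Prop1p} consists precisely of the remark that one repeats the proof of Proposition \ref{propE0} with the $p$-field versions of the ingredients (Definition \ref{functionclasspadic}, Theorems \ref{Thm_Igusa_ZF} and \ref{Thm_Igusa_2}), the refinement (2) coming from \cite[I Theorem 5.3]{I3} and Proposition \ref{Prop Igusa}. Your reduction to the claim that $Z_0\in\mathcal Z$, the Mellin/Fourier mechanism via the fibre integral $F_0$, the identification of the poles of $Z_0$ with the poles of $Z_\Phi(\omega;f/g)$ of negative real part, and the derivation of (2) from the vanishing of $w_\chi(-\lambda)$ are all exactly as in the paper.

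One assertion in your justification of the key claim is, however, incorrect: you state that, thanks to the choice of $\Phi_0$, the configuration $M_i=0$ with $v_i>1$ \emph{never happens} in the charts at hand. It can happen: a chart $B_j$, $j\in J_0$, is centred at a point $b$ with $\rho(b)=0$, and the adapted resolution only guarantees that all $N_i$ through $b$ are $\geq 0$ with at least one $>0$; a component $E_i$ through $b$ with $N_i=0$ and $v_i>1$ (such components certainly occur, compare Example \ref{example}(1)) produces exactly the configuration you exclude. What saves the argument --- and what the proof of Proposition \ref{propE0} actually says --- is that in these charts \emph{at least one} $M_i$ is positive. The variables with $M_i=0$ do not occur in the monomial expression of $\rho$, so they contribute only $\omega$-independent constant factors $\int |y_i|_K^{v_i-1}\,|dy_i|_K$ and are harmless for both the pole structure and the conductor bound; the factor belonging to an index with $M_{i_0}>0$ alone forces, via Lemma \ref{lemma3}, the vanishing for $\chi$ of large conductor (triviality of $\chi^{M_{i_0}}$, or of its restriction to a fixed finite-index subgroup of $R_K^\times$, bounds the conductor of $\chi$). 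With this correction your argument is complete and coincides with the paper's.
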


\begin{proof}  This can be shown using the same arguments as in the proof of
Proposition \ref{propE0}.
 The refinement now follows from \cite[I Theorem 5.3]{I3} and Proposition \ref{Prop Igusa}.
\end{proof}

\begin{proposition}
\label{Prop2p} Let $K$ be a $p$-field. We denote by $m_{\lambda}$
the order of a pole $\lambda $ of $Z_\Phi\left( \omega;f/g\right)$.
Then $E_\infty\left( z\right) $ has an expansion of the form
\begin{equation}
E_{\infty}\left( z\right) -C= {\displaystyle\sum\limits_{\lambda
>0}}\
{\displaystyle\sum\limits_{m=1}^{m_{\lambda}}} A_{\lambda,m}\left(
ac\ z\right) \left\vert z\right\vert _{K}^{\lambda}\left(
\ln\left\vert z\right\vert _{K}\right) ^{m-1} \label{Asim_E_1}
\end{equation}
for sufficiently small $|z|_K$, where $C=E_\infty(0)$ is a constant,
$\lambda$ runs through all the poles$\mod 2\pi i / \ln q$ with
positive real part of $Z_\Phi\left( \omega;f/g\right)$ (for all
$\omega$), and each $A_{\lambda,m}$ is a locally constant function
on $R_K^\times$.
\end{proposition}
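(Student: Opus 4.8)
The plan is to mimic the proof of Proposition \ref{PropEinfty}, replacing the $\mathbb{R}$-field ingredients by their $p$-field counterparts (Definition \ref{functionclasspadic}, Theorem \ref{Thm_Igusa_2}(2)--(3), Proposition \ref{Prop Igusa}) and invoking the intermediate steps established in the proofs of Propositions \ref{propE0} and \ref{Prop1p}. First I would record, exactly as in the opening of the proof of Proposition \ref{PropEinfty}, that the positive poles of $Z_{\Phi}(\omega;f/g)$ are precisely the poles of
$$
\int_{X_K\setminus \sigma^{-1}D_K} \Phi_\infty(y)\, \omega\big(\rho(y)\big)\, |\sigma^* dx|_K = \int_{X_K\setminus \sigma^{-1}D_K} \Phi_\infty(y)\, \omega^{-1}\big((1/\rho)(y)\big)\, |\sigma^* dx|_K .
$$
Applying the $p$-field versions of those intermediate steps to the $K$-valued function $1/\rho$ on $X_K\setminus\rho^{-1}\{0\}$ together with $\Phi_\infty$ (replacing $\rho$, $\Phi_0$, $\omega$ by $1/\rho$, $\Phi_\infty$, $\omega^{-1}$), I obtain that the fibre integral $F^{\#}(t):=\int_{(1/\rho)(y)=t}\Phi_\infty(y)\,|\sigma^* dx/d(1/\rho)|_K$ lies in the class $\mathcal{G}$ of Definition \ref{functionclasspadic}, is related through the Mellin transform to the function $Z_\infty(\omega;1/\rho):=\int_{X_K}\Phi_\infty(y)\,\omega((1/\rho)(y))\,|\sigma^* dx|_K$, which lies in $\mathcal{Z}$, and hence has the expansion $F^{\#}(t)=\sum_{\lambda}\sum_{m=1}^{m_\lambda} b_{\lambda,m}(ac\ t)\,|t|_K^{\lambda-1}(\ln|t|_K)^{m-1}$ for $|t|_K$ sufficiently small, where $\lambda$ ranges over the positive poles$\mod 2\pi i/\ln q$ of $Z_{\Phi}(\omega;f/g)$ (for all $\omega$) and each $b_{\lambda,m}$ is locally constant on $R_K^\times$.

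Next I would transfer this to the fibre integral $F_\infty(u):=\int_{\rho(y)=u}\Phi_\infty(y)\,|\sigma^* dx/d\rho|_K$. The same Gel'fand-Leray identity as in the proof of Proposition \ref{PropEinfty} gives $F_\infty(u)=|u|_K^{-2}F^{\#}(1/u)$, and $E_\infty(z)=\int_{K^\times}\Psi(z\cdot u)F_\infty(u)\,|du|_K$ is the Fourier transform of $F_\infty$. Since $F^{\#}$ vanishes for $|t|_K$ large (Definition \ref{functionclasspadic}), $F_\infty$ vanishes near $0$ and extends to a locally constant function on $K$; substituting the expansion of $F^{\#}$ shows that for $|u|_K$ large, $F_\infty(u)=\sum_{\lambda}\sum_m (-1)^{m-1}b_{\lambda,m}((ac\ u)^{-1})\,|u|_K^{-(\lambda+1)}(\ln|u|_K)^{m-1}$, so $F_\infty$ belongs to the space $\mathcal{G}^*$ of Theorem \ref{Thm_Igusa_2}(2). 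Writing $F_\infty=G^*$ with $G\in\mathcal{G}$, Theorem \ref{Thm_Igusa_2}(3) yields $E_\infty(z)=(G^*)^*(z)=G(-z)$, so $E_\infty\in\mathcal{G}$ and the expansion of $E_\infty$ near $0$ is that of $G$.

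It then remains to determine which exponents actually occur in $G$, and here the $p$-field case turns out to be genuinely simpler than the $\mathbb{R}$-field one. By Proposition \ref{Prop Igusa}(3), for a $p$-field $w_\chi(\mu)=0$ only when $\chi=1$ and $\mu=1\mod 2\pi i/\ln q$. The exponents $\mu=\lambda+1$ attached to the positive poles $\lambda$ all satisfy $\operatorname{Re}(\mu)>1$, so $w_\chi(\mu)\neq 0$ there; hence by Proposition \ref{Prop Igusa}(2) the relevant coefficient matrices have full rank, the $A_{\lambda,m}$ are determined by the $b_{\lambda,m}$, and the orders $m_\lambda$ are preserved, so there is no analogue of the shift $m_\lambda\mapsto m_\lambda+\delta_\lambda$ of Proposition \ref{PropEinfty}. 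The sole exceptional exponent is $\mu=1$, i.e.\ $\lambda=0$: there $w_1(1)=0$, which allows $G$ to carry one extra constant coefficient $A_{0,1}$ that is invisible in $F_\infty$. Exactly as in the proof of Proposition \ref{PropEinfty}, formula (\ref{relatebtoc}) forces the Fourier coefficients $A_{0,1,\chi}$ to vanish for $\chi\neq 1$, so $A_{0,1}$ is a constant, necessarily $C=E_\infty(0)=\int_{X_K\setminus\sigma^{-1}D_K}\Phi_\infty(y)\,|\sigma^* dx|_K$; transposing $C$ to the left-hand side yields the asserted expansion. The only genuinely delicate point is this last bit of bookkeeping --- verifying that passing from $F_\infty$ to $E_\infty$ introduces exactly one additional exponent, $\lambda=0$, and that its coefficient is forced to be $E_\infty(0)$; the remainder is a direct transcription of Igusa's $p$-field formalism.
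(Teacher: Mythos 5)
Your proposal is correct and follows essentially the same route as the paper, which itself simply refers back to the proof of Proposition \ref{PropEinfty} and notes that for $p$-fields Proposition \ref{Prop Igusa}(3) only allows the single extra term $C=E_\infty(0)$ at $\lambda=0$. Your filling-in of the details (the $\mathcal{G}$/$\mathcal{Z}$ correspondence for $F^{\#}$ and $Z_\infty(\omega;1/\rho)$, the Gel'fand--Leray identity $F_\infty(u)=|u|_K^{-2}F^{\#}(1/u)$, and the observation that $w_\chi(\lambda+1)\neq 0$ for $\operatorname{Re}(\lambda)>0$ so that no order shift $\delta_\lambda$ occurs) is accurate and consistent with the paper's intended argument.
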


\begin{proof}
This can be shown using the same arguments as in the proof of
Proposition \ref{PropEinfty}. For $p$-fields however, Proposition
\ref{Prop Igusa}(3) is \lq easier\rq, saying that
$w_\chi(\lambda+1)=0$ only if $\chi=1$ and $\lambda=0$$\mod 2\pi i /
\ln q$, yielding as only \lq extra term\rq\ the constant
$C=E_\infty(0)$.
\end{proof}

\section{Expansions for oscillatory integrals}
\label{section expansions}

\subsection{Expansion for $E_{\Phi}\left( z;f/g\right) $ as $z$
approaches zero}

We start with an obvious lemma, and show explicitly the result for
$\mathbb{R}$-fields. The proof for $p$-fields is analogous.

\begin{lemma}
\label{Lemma4} With the notation of Subsection \ref{subsection
set-up resolution}, we have that
$$\lim_{z\rightarrow0}E_{1}\left( z\right)
=\lim_{z\rightarrow0}E_{3}\left( z\right)=
{\displaystyle\int\limits_{K^{n}}} \Theta\left( y\right)\left(
{\displaystyle\prod_{i=1}^n} \left\vert y_{i}\right\vert_K
^{v_{i}-1}\right) \left\vert dy\right\vert _{K}.
$$
\end{lemma}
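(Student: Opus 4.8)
The plan is to treat this as a direct application of the dominated convergence theorem, with the same argument working uniformly over all local fields $K$. First I would fix $y$ in the support of $\Theta$ and note that, for $E_1$, the quantity $c\prod_{i=1}^r y_i^{M_i}$ stays in a bounded subset of $K$ as $y$ ranges over $\supp\Theta$ (which has compact closure), and likewise $u(y)$ for $E_3$; hence $z\cdot c\prod_{i=1}^r y_i^{M_i}\to 0$, respectively $z\cdot u(y)\to 0$, as $z\to 0$. Since $\Psi$ is a continuous character with $\Psi(0)=1$, the integrands of $E_1(z)$ and of $E_3(z)$ converge pointwise, as $z\to 0$, to the common limit $\Theta(y)\prod_{i=1}^n|y_i|_K^{v_i-1}$.

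Next I would exhibit an integrable majorant independent of $z$. Because $\Psi$ takes values in the unit circle, $|\Psi(\cdot)|=1$, so for every $z$ both integrands are bounded in absolute value by $|\Theta(y)|\prod_{i=1}^n|y_i|_K^{v_i-1}$. This function is supported in the polydisc $B$, whose closure is compact; on $\bar B$ the function $\Theta$ is bounded, and each factor $|y_i|_K^{v_i-1}$ is bounded as well, since $v_i\geq 1$ forces $v_i-1\geq 0$ (when $v_i=1$ this factor equals $1$, and when $v_i>1$ it is continuous, vanishing along $y_i=0$). Hence the majorant lies in $L^1(K^n,|dy|_K)$. Applying the dominated convergence theorem then yields
\[
\lim_{z\to 0}E_1(z)=\lim_{z\to 0}E_3(z)=\int_{K^n}\Theta(y)\prod_{i=1}^n|y_i|_K^{v_i-1}\,|dy|_K,
\]
as claimed. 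For $p$-fields the argument is identical, with $\Theta$ now the locally constant characteristic function of the compact polydisc $B$; nothing essential changes.

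Since the lemma is elementary, there is no real obstacle: the only point requiring a moment's attention is the integrability of the majorant, which here is immediate precisely because the exponents $v_i-1$ are non-negative, so no principal-value or analytic-continuation subtleties arise. This is in contrast with the integrals $E_2(z)$, where the factor $(\prod_{i=1}^r y_i^{M_i})^{-1}$ inside $\Psi$ prevents the argument of the character from tending to $0$ near the coordinate hyperplanes $y_i=0$, so that the naive passage to the limit fails and $E_2$ must be handled separately (through the expansion of $E_\infty$).
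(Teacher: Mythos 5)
Your proof is correct and follows exactly the paper's route: the paper's entire argument for this lemma is an appeal to the Lebesgue Dominated Convergence Theorem, and you have simply supplied the (straightforward) details of pointwise convergence and the integrable majorant. Your closing remark about why the same argument fails for $E_2$ is accurate and consistent with how the paper handles that case via $E_\infty$.
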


\begin{proof}
This follows from the Lebesgue Dominated Convergence Theorem.
\end{proof}


\begin{theorem}\label{Esmallz} Let $K$ be an $\mathbb{R}$-field; we put $d_K:=1$ if
$K=\mathbb{R}$ and $d_K:=1/2$ if $K=\mathbb{C}$. We take
$\frac{f}{g}:U\to K$ and $\Phi$ as in Section \ref{section
preliminaries}.
 If $Z_{\Phi}\left( \omega;f/g\right)$ has a positive
pole for some $\omega$, then
\begin{align*}
E_{\Phi}\left(  z;f/g\right)  -%
{\displaystyle\int\limits_{K^{n}}}
\Phi\left(  x\right)  \left\vert dx\right\vert _{K}  &  \approx\\%
{\displaystyle\sum\limits_{\gamma>0}}
{\displaystyle\sum\limits_{m=1}^{m_{\gamma}+\delta_\gamma}}
e_{\gamma,m}\left( \frac{z}{\left\vert z\right\vert }\right)
\left\vert z\right\vert _{K}^{\gamma}\left( \ln \left\vert
z\right\vert _{K}\right)^{m-1} \text{ as }\left\vert z\right\vert
_{K} & \rightarrow0,
\end{align*}
where $\gamma$ runs trough $d_K\mathbb{Z}_{>0}$ and all the positive
 poles of $Z_{\Phi}\left( \omega;f/g\right) $, for all
$\omega\in\Omega\left( K^{\times}\right)$, that are not in
$d_K\mathbb{Z}_{>0}$, with $m_{\gamma}$ the order of $\gamma$, and
with $\delta_{\gamma}=0$ if $\gamma \notin d_K\mathbb{Z}_{>0}$
 and $\delta_{\gamma}=1$ if $\gamma\in d_K\mathbb{Z}_{>0}$.
 Finally, each $e_{\gamma,m}$ is a smooth
 function on $\left\{ u\in K^{\times}\mid \left\vert u\right\vert
_{K}=1\right\}$.
%
\end{theorem}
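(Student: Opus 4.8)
The plan is to work on the resolution space, via the decomposition introduced in Subsection \ref{subsection auxexpansions}. Put $\Phi_{\mathrm{rest}}:=\sigma^{*}\Phi-\Phi_{0}-\Phi_{\infty}$; by the choice of covering there it is supported on a compact set disjoint from $\rho^{-1}\{0\}\cup\rho^{-1}\{\infty\}$, so that
\[
E_{\Phi}(z;f/g)=E_{0}(z)+E_{\infty}(z)+E_{\mathrm{rest}}(z),\qquad E_{\mathrm{rest}}(z):=\int_{X_{K}\setminus\sigma^{-1}D_{K}}\Phi_{\mathrm{rest}}(y)\,\Psi\big(z\cdot\rho(y)\big)\,|\sigma^{*}dx|_{K},
\]
and the local contributions to $E_{\mathrm{rest}}$ are of type $E_{3}$ as in (\ref{E_3}). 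The first point is that $E_{0}$ and $E_{\mathrm{rest}}$ are $C^{\infty}$ functions of $z$ in a neighbourhood of $z=0$: on the compact sets $\supp\Phi_{0}$ and $\supp\Phi_{\mathrm{rest}}$ the morphism $\rho$ is $K$-valued and bounded (these sets miss $\rho^{-1}\{\infty\}$), and $|\sigma^{*}dx|_{K}$ has finite mass there since all $v_{i}\geq 1$; hence one may differentiate under the integral sign with respect to $z$ (and to $\bar z$ when $K=\mathbb{C}$) arbitrarily often. So $E_{0}(z)+E_{\mathrm{rest}}(z)$ has an ordinary Taylor expansion at $z=0$.

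Rewriting a monomial $z^{a}\bar z^{b}$ (with $b=0$ when $K=\mathbb{R}$) as $(z/|z|)^{a-b}|z|^{\,a+b}=(z/|z|)^{a-b}|z|_{K}^{(a+b)d_{K}}$, this Taylor expansion becomes a termwise differentiable, uniform asymptotic expansion
\[
E_{0}(z)+E_{\mathrm{rest}}(z)\ \approx\ \sum_{\gamma\in d_{K}\mathbb{Z}_{\geq 0}}\widetilde e_{\gamma}\!\left(\tfrac{z}{|z|}\right)|z|_{K}^{\gamma}\quad\text{as }|z|_{K}\to 0,
\]
with each $\widetilde e_{\gamma}$ a smooth function on $\{u\in K^{\times}\mid|u|_{K}=1\}$, no power of $\ln|z|_{K}$ occurring, and constant term $\widetilde e_{0}=E_{0}(0)+E_{\mathrm{rest}}(0)$.

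It then remains to add the expansion of $E_{\infty}(z)-E_{\infty}(0)$ furnished by Proposition \ref{PropEinfty}, in which the $\lambda$ range over $d_{K}\mathbb{Z}_{>0}$ together with the positive poles of $Z_{\Phi}(\omega;f/g)$ (for all $\omega$) not in $d_{K}\mathbb{Z}_{>0}$, with orders at most $m_{\lambda}+\delta_{\lambda}$. Summing the three pieces and using the $K$-analytic change of variables formula for the constant,
\[
E_{0}(0)+E_{\mathrm{rest}}(0)+E_{\infty}(0)=\int_{X_{K}}\sigma^{*}\Phi\,|\sigma^{*}dx|_{K}=\int_{K^{n}}\Phi(x)\,|dx|_{K},
\]
one obtains that $E_{\Phi}(z;f/g)-\int_{K^{n}}\Phi\,|dx|_{K}$ has the asserted expansion: for $\gamma\in d_{K}\mathbb{Z}_{>0}$ the pure Taylor term (of order $m=1$) and the $E_{\infty}$-contribution combine into order at most $m_{\gamma}+1$, where $m_{\gamma}=0$ when $\gamma$ is not a pole (so that $m=1$ then suffices, consistent with $\delta_{\gamma}=1$); for a positive pole $\gamma\notin d_{K}\mathbb{Z}_{>0}$ one gets order at most $m_{\gamma}$, coming entirely from $E_{\infty}$. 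The hypothesis that $Z_{\Phi}(\omega;f/g)$ has a positive pole guarantees $\Phi_{\infty}\not\equiv 0$, so that $E_{\infty}$ genuinely enters; the opposite case is treated separately in Theorem \ref{estimate0}.

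The step that requires care — and that might look surprising — is the smoothness of $E_{0}$ and $E_{\mathrm{rest}}$ at $z=0$. This is precisely where the new feature of Remark \ref{units} (critical values of $\rho$ lying in $K^{\times}$), which genuinely obstructs the expansion \emph{at infinity}, turns out to be harmless here: as $z\to 0$ the phase $\Psi(z\cdot\rho(y))$ tends to $1$ uniformly on the relevant compact sets, and differentiation under the integral sign is legitimate no matter how $\rho$ behaves there. Everything else is bookkeeping resting on Proposition \ref{PropEinfty} and the change of variables formula; the $p$-field analogue, Theorem \ref{Esmallz-p}, is proved identically with Proposition \ref{Prop2p} in place of Proposition \ref{PropEinfty}.
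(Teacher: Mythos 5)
Your proof is correct and follows essentially the same route as the paper: decompose $E_{\Phi}$ into $E_{\infty}$ plus the local contributions of types $E_{1}$ and $E_{3}$, invoke Proposition \ref{PropEinfty} for $E_{\infty}$, and identify the sum of the constant terms with $\int_{K^{n}}\Phi\,|dx|_{K}$. Your treatment of the $E_{1}$/$E_{3}$ part, via differentiation under the integral sign and a Taylor expansion at $z=0$ rewritten in powers of $|z|_{K}$, is in fact slightly more precise than the paper's appeal to Lemma \ref{Lemma4}, which only records the limits as $z\to 0$.
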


\begin{proof}  By the definitions in Subsections \ref{subsection set-up
resolution} and \ref{subsection auxexpansions}, we have that
 $E_{\Phi}\left( z;f/g\right) $ is the sum of $E_{\infty}\left(  z\right)$ and  a linear
combination of elementary integrals of types $E_{1}\left( z\right)$
 and $E_{3}\left( z\right) $.
By Lemma \ref{Lemma4}, the contributions of integrals of type
$E_{1}\left( z\right) $ and $E_{3}\left( z\right) $ are constants.
The asymptotic expansion of $E_{\infty}\left( z\right)$ was
calculated in Proposition \ref{PropEinfty}. Finally, the sum of all
constants that appear in the calculations equals
$\int_{K^{n}}\Phi\left( x\right) \left\vert dx\right\vert _{K}$, by
the Lebesgue Dominated Convergence Theorem.
%
\end{proof}

\begin{theorem}\label{Esmallz-p} Let $K$ be a $p$-field. We take
$\frac{f}{g}:U\to K$ and $\Phi$ as in Section \ref{section
preliminaries}. 
If $Z_{\Phi}\left( \omega;f/g\right)$ has a pole with
positive real part for some $\omega$, then
\begin{align*}
E_{\Phi}\left( z;f/g\right) - {\displaystyle\int\limits_{K^{n}}}
\Phi\left(  x\right)  \left\vert dx\right\vert _{K}  & =\\
{\displaystyle\sum\limits_{\gamma>0}}\
{\displaystyle\sum\limits_{m=1}^{m_{\gamma}}} e_{\gamma,m}\left( ac
\, z\right) \left\vert z\right\vert _{K}^{\gamma}\left( \ln
\left\vert z\right\vert _{K}\right)^{m-1} & \text{ for sufficiently
small }\left\vert z\right\vert _{K} ,
\end{align*}
where $\gamma$ runs trough all the poles$\mod 2\pi i / \ln q$ with
positive real part of $Z_{\Phi}\left( \omega;f/g\right) $, for all
$\omega\in\Omega\left( K^{\times}\right)$, and with $m_{\gamma}$ the
order of $\gamma$.
 Finally, each $e_{\gamma,m}$ is a locally constant
function on $R_K^\times$.
%
\end{theorem}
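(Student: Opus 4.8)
The plan is to follow the proof of Theorem~\ref{Esmallz} almost verbatim, replacing its $\mathbb{R}$-field ingredients by their $p$-field counterparts. First I would invoke the decomposition set up in Subsections~\ref{subsection set-up resolution} and \ref{subsection auxexpansions}: after pulling $E_{\Phi}(z;f/g)$ back to the adapted resolution $\sigma$ and using the chosen cover of $\supp(\sigma^{*}\Phi)$ by finitely many (now disjoint) polydiscs, one writes $E_{\Phi}(z;f/g)$ as $E_{\infty}(z)$ plus a finite $\mathbb{C}$-linear combination of elementary integrals of type $E_{1}(z)$ and of type $E_{3}(z)$, coming respectively from the charts centred at points $b$ with $\rho(b)=0$ and with $\rho(b)\in K^{\times}$.

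Second, by Lemma~\ref{Lemma4} each $E_{1}(z)$ and each $E_{3}(z)$ converges, as $z\to 0$, to the constant $\int_{K^{n}}\Theta(y)\prod_{i=1}^{n}|y_{i}|_{K}^{v_{i}-1}\,|dy|_{K}$. In the $p$-field case I would note that this is actually an equality for $|z|_{K}$ small enough: on the compact support of $\Theta$ the arguments $z\cdot c\prod_{i} y_{i}^{M_{i}}$ and $z\cdot u(y)$ lie in $R_{K}$ once $|z|_{K}$ is sufficiently small, so $\Psi$ is identically $1$ there. This is precisely why the $p$-field statement is an honest identity for small $|z|_{K}$ rather than merely an asymptotic expansion. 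Third, Proposition~\ref{Prop2p} supplies, for sufficiently small $|z|_{K}$, the expansion of $E_{\infty}(z)-C$ with $C=E_{\infty}(0)$, the sum running over the poles modulo $2\pi i/\ln q$ with positive real part of $Z_{\Phi}(\omega;f/g)$ (for all $\omega$) and with locally constant coefficients on $R_{K}^{\times}$.

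Finally I would assemble the constants. The constant $C=E_{\infty}(0)=\int_{X_{K}\setminus\sigma^{-1}D_{K}}\Phi_{\infty}(y)\,|\sigma^{*}dx|_{K}$ together with all the constant contributions of the $E_{1}$ and $E_{3}$ terms add up, by the chosen decomposition of $\sigma^{*}\Phi$ and the $K$-analytic change of variables formula, to $\int_{K^{n}}\Phi(x)\,|dx|_{K}$. Collecting equal powers of $|z|_{K}$ and $\ln|z|_{K}$ then produces the locally constant functions $e_{\gamma,m}$ on $R_{K}^{\times}$ and finishes the proof.

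I do not expect a genuine obstacle here; the one point that needs care — and it is bookkeeping rather than a new difficulty — is that the poles with positive real part of $Z_{\Phi}(\omega;f/g)$ are exactly those produced by the $\Phi_{\infty}$-piece, so that none are contributed by the $E_{1}$ and $E_{3}$ terms. This holds because in each polydisc covering $\rho^{-1}\{\infty\}$ at least one exponent of the monomial part of $1/\rho$ is strictly positive (see Remark~\ref{altern} and the use of an adapted resolution), so the remaining elementary integrals contribute only a constant as $|z|_{K}\to 0$, exactly as used above. No difficulty beyond those already handled in Propositions~\ref{Prop1p} and \ref{Prop2p} arises.
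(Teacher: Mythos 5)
Your proposal is correct and follows essentially the same route as the paper, which proves the $\mathbb{R}$-field version (Theorem \ref{Esmallz}) by decomposing $E_{\Phi}$ into $E_{\infty}$ plus elementary integrals of types $E_{1}$ and $E_{3}$, invoking Lemma \ref{Lemma4} for the constant contributions and Proposition \ref{PropEinfty} (here Proposition \ref{Prop2p}) for the expansion of $E_{\infty}$, and then notes the $p$-field case is analogous. Your added observation that in the $p$-field case the $E_{1}$ and $E_{3}$ contributions are exactly constant for small $|z|_{K}$ (since $\Psi$ is trivial on $R_{K}$), so the result is an identity rather than an asymptotic expansion, is a correct and welcome point of care that the paper leaves implicit.
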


\subsection{Expansion for $E_{\Phi}\left( z;f/g\right) $ as $z$
approaches infinity}

We start with two useful preliminary facts.

\begin{lemma}\label{finitecriticalvalues} Let $K$ be an $\mathbb{R}$-field or a $p$-field.
Let $h: V\to K$ be a non-constant $K$-analytic function on an open
set $V$ in $K^n$, and $D$ a compact subset of $V$. Then the
restriction of $h$ to a small neighborhood
 of $D$ has only finitely many
critical values.
\end{lemma}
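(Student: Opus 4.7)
My plan is to reduce the lemma to a statement about an embedded resolution of the critical locus $C_h := \{x \in V : \nabla h(x) = 0\}$. First I would shrink $V$ to an open neighborhood $W \supset D$ with compact closure $\overline{W} \subset V$; then $C_h$ is a closed $K$-analytic subset of $V$ (defined by the vanishing of the analytic functions $\partial h/\partial x_j$), and the critical values of $h|_W$ form the image $h(C_h \cap W)$.

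Next, I would apply Hironaka's theorem in the locally finite form of Remark \ref{convention} to obtain a proper $K$-analytic morphism $\sigma : X \to W$, isomorphic over $W \setminus C_h$, such that $\sigma^{-1}(C_h)$ is supported on a simple normal crossings divisor $\bigcup_{i \in T} E_i$. Local finiteness together with the compactness of $\sigma^{-1}(\overline{W})$ ensures that only finitely many $E_i$ meet this preimage.

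The key claim is that $h \circ \sigma$ is locally constant on each $E_i$. Since each analytic function $\sigma^*(\partial h/\partial x_j)$ vanishes on $E_i$ (because $\sigma(E_i) \subset C_h$), at a generic point $b \in E_i$ with local coordinates $(y_1, \ldots, y_n)$ in which $E_i = \{y_1 = 0\}$, each $\sigma^*(\partial h/\partial x_j)$ is divisible by $y_1$. Therefore
\[
d(h \circ \sigma) = \sum_{j=1}^n \sigma^*(\partial h/\partial x_j) \, d\sigma_j
\]
is divisible by $y_1$, hence vanishes on $E_i$ near $b$. So $h \circ \sigma$ has vanishing differential along the smooth open part of $E_i$, and by continuity along all of $E_i$, which forces it to be locally constant on $E_i$.

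Finally, $h(C_h \cap W)$ is contained in the finite union $\bigcup_i (h \circ \sigma)(E_i \cap \sigma^{-1}(\overline{W}))$. In the Archimedean case each $E_i$ has only finitely many connected components in a compact region and $h \circ \sigma$ is constant on each, while in the $p$-adic case a locally constant function on a compact set automatically takes finitely many values; either way the set of critical values is finite. The main obstacle, and the crucial step, is the key claim above, which converts the set-theoretic vanishing of the $\partial h/\partial x_j$ on $C_h$ into the analytic vanishing of $d(h \circ \sigma)$ along the whole exceptional divisor.
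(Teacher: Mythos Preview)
Your argument is correct, but it follows a genuinely different route from the paper's proof. The paper argues as follows: the image of the critical set (intersected with a compact) under $h$ is a globally subanalytic subset of $K$ (invoking $p$-adic subanalytic geometry \cite{D-vdD},\cite{vdD-H-M},\cite{C} in the non-Archimedean case), and by Sard's theorem it has measure zero; a measure-zero subanalytic subset of a line is necessarily finite. For $K=\mathbb{R}$ the curve selection lemma is offered as an alternative, and $K=\mathbb{C}$ is reduced to the real case via the Cauchy--Riemann equations.

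Your approach instead stays entirely inside the resolution-of-singularities framework already central to the paper: you principalize the ideal of $C_h$, and your key chain-rule computation shows that $d(h\circ\sigma)$ vanishes identically along each component $E_i$ of $\sigma^{-1}(C_h)$, so $h\circ\sigma$ is locally constant there. This is a nice observation, and it avoids invoking subanalytic geometry or Sard as black boxes. The only point worth tightening is the finiteness of connected components of $E_i\cap\sigma^{-1}(\overline{W})$ in the Archimedean case: this follows because $E_i$ is a manifold, hence locally connected, so its connected components are open in $E_i$ and only finitely many can meet the compact set $\sigma^{-1}(\overline{W})$. With that remark your proof is complete; it trades the analytic-geometric input (subanalyticity, Sard) for a second use of Hironaka, which is a reasonable exchange in this paper's context.
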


\begin{proof}  This is well known to the experts.
For $K=\mathbb{R}$, it follows from an appropriate version of the
curve selection lemma, see e.g. \cite[Corollary 1.6.1]{O}. We sketch
an alternative argument that also works for $p$-fields $K$. Consider
the restriction of $h$ to a small neighborhood of $D$. Because $D$
is compact, the image of the critical set of this restriction is
globally subanalytic in $K$ (we refer to
\cite{D-vdD},\cite{vdD-H-M},\cite{C} for $p$-fields). Since, by
Sard's theorem, this image is also of measure zero, it can only
consist of a finite set of points.

For $K=\mathbb{C}$, the statement follows from the real case by
considering $h$ as a map from $V\subset \mathbb{R}^{2n}$ to
$\mathbb{R}^2$ and by the Cauchy-Riemann equations.
\end{proof}

As before, let $\Theta$ be the characteristic function of a small
polydisc around the origin, or a smooth function supported in the
polydisc
\[
\left\{ y\in K^{n}\mid\left\vert y_{j}\right\vert_K <1\text{ for
}j=1,\ldots ,n\right\} ,
\]
according as $K$ is a $p$-field or an $\mathbb{R}$-field. We
consider a special case of the integral $E_3$ of (\ref{E_3}), namely
\begin{equation}
\begin{aligned}
E'_{3}\left( z\right) :&= {\displaystyle\int\limits_{K^{n}}}
\Theta\left( y\right) \Psi\big( z \cdot (1+y_1)\big) \left(
{\displaystyle\prod_{i=2}^n} \left\vert y_{i}\right\vert_K
^{v_{i}-1}\right)
 \left\vert
dy\right\vert _{K} \\
&= \Psi(z) {\displaystyle\int\limits_{K^{n}}} \Theta\left( y\right)
\Psi( z \cdot y_1) \left( {\displaystyle\prod_{i=2}^n} \left\vert
y_{i}\right\vert_K ^{v_{i}-1}\right)
 \left\vert
dy\right\vert _{K}. \label{E'_3}
 \end{aligned}
\end{equation}
Note that $y_1$ does not occur in the product in the integrand.

\begin{lemma}
\label{Lemma4bis} (1) Let $K$ be an $\mathbb{R}$-field. Then
$E'_{3}\left( z\right) =O\left( \left\vert z\right\vert
_{K}^{-k}\right) $ as $\left\vert z\right\vert _{K}\rightarrow
\infty$, for any positive number $k$.

(2) Let $K$ be a $p$-field. Then $E'_{3}\left( z\right)=0$ for
sufficiently large $\left\vert z\right\vert _{K}$.
\end{lemma}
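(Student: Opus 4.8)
The plan is to recognize that $E'_3(z)$ factors, up to the unimodular constant $\Psi(z)$, as a product of two independent integrals: the integral in $y_1$ against $\Theta$ (with no monomial weight), and the integrals in $y_2,\dots,y_n$ (which contribute only a constant, independent of $z$). So everything reduces to understanding the one-variable integral
\[
I(z):=\int_{K} \theta(y_1)\,\Psi(z\cdot y_1)\,|dy_1|_K,
\]
where $\theta$ is smooth compactly supported (in the $\mathbb{R}$-field case) or the characteristic function of a ball (in the $p$-field case). Strictly speaking $\Theta$ need not be a product $\theta(y_1)\prod_{i\ge 2}\theta_i(y_i)$, but in the $p$-field case $\Theta$ is the characteristic function of a polydisc, which is such a product, and in the $\mathbb{R}$-field case one may reduce to that situation by Fubini after noting that for each fixed $(y_2,\dots,y_n)$ the inner $y_1$-integral is the Fourier transform at $z$ of a Schwartz function, uniformly. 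Thus $I(z)$ is essentially $\widehat{\theta}(z)$, the Fourier transform of a test function.

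First, for $K$ an $\mathbb{R}$-field: since $\theta\in\mathcal{D}(K)\subset\mathcal{S}(K)$ is smooth with compact support, its Fourier transform $\widehat{\theta}$ is a Schwartz function, hence decays faster than any power: $\widehat{\theta}(z)=O(|z|_K^{-k})$ as $|z|_K\to\infty$ for every $k>0$. (In the $\mathbb{C}$ case one uses that $|z|_\mathbb{C}=|z|^2$ and the Schwartz decay in the real variable $z\in\mathbb{R}^2$.) The remaining integrals $\int_K \theta_i(y_i)|y_i|_K^{v_i-1}|dy_i|_K$ for $i=2,\dots,n$ are finite constants not depending on $z$ (convergence is clear since $v_i\ge 1$), so $E'_3(z)=\Psi(z)\cdot I(z)\cdot(\text{const})=O(|z|_K^{-k})$ for all $k>0$, which is assertion (1).

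Second, for $K$ a $p$-field: here $\theta=\mathbf 1_{\mathfrak p^{e}R_K}$ for some $e\in\mathbb Z$ (the projection of the polydisc $B$ to the first coordinate), so $I(z)=\int_{\mathfrak p^{e}R_K}\Psi(z\, y_1)\,|dy_1|_K$. This is a standard Gauss-type computation: by translation invariance it equals $q^{-e}$ if $\Psi$ is trivial on $z\,\mathfrak p^{e}R_K$, i.e. if $|z|_K\le q^{e}$ (using that $\Psi$ is trivial on $R_K$ but not on $P_K^{-1}$), and equals $0$ otherwise, i.e. as soon as $|z|_K>q^{e}$. Hence $I(z)=0$ for $|z|_K$ sufficiently large, and therefore $E'_3(z)=0$ for sufficiently large $|z|_K$, giving assertion (2). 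The remaining factors $\int_{y_i\in\mathfrak p^{e_i}R_K}|y_i|_K^{v_i-1}|dy_i|_K$ are again harmless finite constants.

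There is no serious obstacle here; the only point requiring a little care is the reduction to a genuine product situation in the $\mathbb{R}$-field case (since $\Theta$ is only assumed smooth and compactly supported, not a product of one-variable functions). This is handled cleanly by Fubini: write $E'_3(z)=\Psi(z)\int_{K^{n-1}}\big(\prod_{i\ge 2}|y_i|_K^{v_i-1}\big)\big(\int_K\Theta(y)\Psi(z\,y_1)\,|dy_1|_K\big)|dy_2\cdots dy_n|_K$, observe that for each fixed $(y_2,\dots,y_n)$ the inner integral is the Fourier transform of $y_1\mapsto\Theta(y)$, a compactly supported smooth function whose $y_1$-derivatives are bounded uniformly in $(y_2,\dots,y_n)$ on the (compact) support, so the inner integral is $O(|z|_K^{-k})$ uniformly, and then integrate over the compact set in $(y_2,\dots,y_n)$. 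This yields the claimed uniform rapid decay.
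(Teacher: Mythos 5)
Your proof is correct and follows essentially the same route as the paper: Fubini to isolate the $y_1$-integration (the weights $\vert y_i\vert_K^{v_i-1}$, $i\ge 2$, being harmless constants precisely because $y_1$ carries no such weight), rapid decay of the Fourier transform of a compactly supported smooth function via repeated integration by parts in the Archimedean case, and vanishing of $\int_{\mathfrak{p}^{e}R_K}\Psi(z\,y_1)\,\vert dy_1\vert_K$ for large $\vert z\vert_K$ in the $p$-adic case. The only cosmetic difference is that for $K=\mathbb{C}$ the paper reduces to the real case by an explicit change of variables after assuming one real coordinate of $z$ is large, whereas you invoke Schwartz decay on $\mathbb{R}^2$ directly together with $\vert z\vert_{\mathbb{C}}=\vert z\vert^2$; both are standard.
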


\noindent {\em Note.} This can be considered as a special case of
(the proof of) Proposition \ref{propE0}(ii) and Proposition
\ref{Prop1p}(ii), but we think that it is appropriate to mention an
explicit elementary proof.

\begin{proof} (1)  If $K=\mathbb{R}$, we can rewrite $E'_{3}\left( z\right) $
by a standard computation as
\[
e^{2\pi \sqrt{-1}z} {\displaystyle\int\limits_{\mathbb{R}}}
\Omega\left( y_{1}\right) e^{2\pi\sqrt{-1} zy_{1}}\left\vert
dy_{1}\right\vert _{\mathbb{R}},
\]
where $\Omega\left( y_{1}\right) $ is a smooth function supported in
$\left\{ y_{1}\in\mathbb{R}\mid\left\vert y_{1}\right\vert
_{\mathbb{R} }<1\right\} $ (by Lebesgue's Dominated Convergence
Theorem). The result follows by using repeated integration by parts.

If $K=\mathbb{C}$, we take $y_{1}=y_{11}+\sqrt{-1}y_{12}$ with
$y_{11},y_{12}\in\mathbb{R}$, and
$z=z_{1}+\sqrt{-1}z_{2}=|z|_{\mathbb{C}}(\widetilde{z}_{1}+\sqrt{-1}\widetilde{z}_{2})$,
with $z_{1},z_{2}\in\mathbb{R}$. Then $E'_{3}\left( z\right) $ can
be rewritten as
\begin{align}
&e^{4\pi \sqrt{-1}z_{1}} {\displaystyle\int\limits_{\mathbb{R}^{2}}}
\Omega\left( y_{11},y_{12}\right) e^{4\pi \sqrt{-1}\left(
y_{11}z_{1}-y_{12}
z_{2}\right)  }\left\vert dy_{11}dy_{12}\right\vert _{\mathbb{R}}\nonumber\\
 =&\ e^{4\pi \sqrt{-1}z_{1}} {\displaystyle\int\limits_{\mathbb{R}^{2}}}
\Omega\left( y_{11},y_{12}\right) e^{4\pi \sqrt{-1}\left\vert
z\right\vert _{\mathbb{C}}\left(
y_{11}\widetilde{z}_{1}-y_{12}\widetilde{z}_{2}\right) }\left\vert
dy_{11}dy_{12}\right\vert _{\mathbb{R}}, \label{osc_integral}
\end{align}
where
$\Omega$ is a smooth function with support in $\left\{ \left(
y_{11},y_{12}\right) \in\mathbb{R}^{2}\mid y_{11}^{2}
+y_{12}^{2}<1\right\} $. Since we are interested in the behavior of
(\ref{osc_integral}) for $\left\vert z\right\vert _{\mathbb{C}}$ big
enough, we may assume that $z_{1}$ or $z_{2}$ is big enough; we
consider for example the case that $z_{1}$ is big enough. Fixing
$\widetilde{z}_{1}$ and $\widetilde{z}_{2}$, we perform the change
of variables
$y_{11}=\frac{u_{1}}{\widetilde{z}_{1}}+\frac{u_{2}\widetilde{z}_{2}%
}{\widetilde{z}_{1}}$, $y_{12}=u_{2}$, reducing the asymptotics of
(\ref{osc_integral}) to the asymptotics of the integrals considered
in the case $K=\mathbb{R}$.

(2) Say $\Theta$ is the characteristic function of $(P^eR_K)^n$.
Then
$$
E'_{3}\left( z\right) = C \Psi(z)\int_{P^eR_K}
 \Psi\big( z \cdot y_1)\big)
 \left\vert
dy_1\right\vert _{K},
$$
with $C$ a constant, and this integral vanishes when $|z|_K >
q^{-e}$.
\end{proof}

\begin{remark}\label{diffform}\rm
In Lemma \ref{Lemma4bis}, it is crucial that the integrand in
(\ref{E'_3}) does not contain a factor $\left\vert
y_{1}\right\vert_K ^{v_{1}-1}$ with $v_1 >1$. Indeed, for example
 in the $p$-field case we have that an integral like
$$\int_{\mathbb{Z}_p} \Psi \big( z(1+y)\big) |y|_{\mathbb{Q}_p}^{v-1}
|dy|_{\mathbb{Q}_p}$$
 does not vanish for sufficiently large $|z|_{\mathbb{Q}_p}$. (A
 straightforward computation shows that it equals
$$\Psi(z) \, \frac{1-p^{v-1}}{1-p^{-v}} \,
|z|_{\mathbb{Q}_p}^{-v} $$
 when $|z|_{\mathbb{Q}_p} \geq p$.)
Similarly, when $K=\mathbb{R}$, consider the integral
$$\int_{\mathbb{R}} \Phi(y) \Psi \big( z(1+y)\big) |y|_{\mathbb{R}}^{v-1}
|dy|_{\mathbb{R}}$$
 with $v$ even, $\Phi$ positive and $\Phi(0)>0$. Then in the
 asymptotic expansion of this integral as $\left\vert z\right\vert_{\mathbb{R}}\rightarrow
\infty$, we have that the term $\Psi(z) |z|_{\mathbb{R}}^{-v}$
appears with non-zero coefficient, see e.g. \cite[p. 183]{AVG}. In
particular, the assertion of Lemma \ref{Lemma4bis}(1) cannot be
true.
\end{remark}


\bigskip
For an analytic function $f$, the only relevant expansion of
$E_{\Phi}\left( z;f\right) $ is as $|z|_K \to \infty$. It depends on
all critical points of $f$, but typically one assumes that
\begin{equation}\label{critical}
C_f \cap \operatorname{supp} \Phi \subset f^{-1}(0) .
\end{equation}
This assumption is not really restrictive, because, by Lemma
\ref{finitecriticalvalues}, $f$ has only finitely many critical
values on the support of $\Phi$. Hence, one can reduce to the
situation (\ref{critical}) by partitioning the original support of
$\Phi$ and applying a translation. Note however that, when $c\in
K^\times$ is a critical value of $f$ and $C_f \cap
\operatorname{supp} \Phi \subset f^{-1}(c)$, we have that
$$
E_{\Phi}( z;f) =E_{\Phi}( z;c+(f-c))= \Psi(c\cdot z) E_{\Phi}(
z;f-c).
$$
Therefore, in expansions like in Proposition \ref{propE0}, all terms
will contain an additional factor $\Psi(c\cdot z)$.

The explicit expansion of $E_{\Phi}( z;f)$ as $|z|_K \to \infty$,
without any assumption on $C_f \cap \operatorname{supp} \Phi$, and
hence involving factors $\Psi(c\cdot z)$ for all critical values $c$
of $f$, was treated for $p$-fields in \cite[Proposition 2.7]{D-V}
(when $f$ is a polynomial).

\medskip
Now for our object of study $E_{\Phi}\left( z;f/g\right) $, one {\em
cannot} reduce the problem to a case like (\ref{critical}) by
 pulling back the integral $E_{\Phi}( z;f/g)$ to the manifold $X_K$ and by restricting the support of
$\Phi$.
 Indeed, suppose that $f$ and $g$ have at least one common zero in $K^n$, say
$P$ (this is of course the interesting situation). Fixing an adapted
embedded resolution $\sigma$ of $D_K$, it turns out that, as soon as
$P\in \supp \Phi$, one must take into account factors $\Psi(c\cdot
z)$, where $c$ is a critical value of $\rho=(f/g) \circ \sigma$ or
another \lq special\rq\ value of $\rho$, defined below.

We keep using the notation of Subsection \ref{subsection set-up
resolution}. In particular we consider around each $b\in X_K$ local
coordinates $y$ in a small enough polydisc $B$, such that
$|\sigma^*dx|_K$ is locally of the form $\prod_{i=1}^n
|y_i|_K^{v_i-1}|dy|_K$, and $\rho$ is locally in $y$ a monomial, the
inverse of a monomial, or invertible, according as $\rho(b)$ is $0$,
$\infty$, or in $K^\times$.

\begin{definition}\label{specialpoints}\rm We take $\frac{f}{g}:U\to K$ and $\Phi$ as in Section \ref{section
preliminaries}. We fix an adapted embedded resolution $\sigma$ of
$D_K$, for which we use the notation of Subsection \ref{subsection
set-up resolution}.

For a point $b\in \sigma^{-1}D_K$ that is not a critical point of
$\rho$ and for which $c=\rho(b)\in K^\times$, we have in local
coordinates $y$ that $\rho$ is of the form $c+\ell(y)+h(y)$, where
$\ell(y)$ is a non-zero linear form and $h(y)$ consists of higher
order terms. We define $\mathcal {S} \subset X_K$ as the set of {\em
special points} of $\rho$, being the union of the critical points of
$\rho$ and the non-critical points $b$ as above for which $\ell(y)$
is linearly dependent on the $y_i$ with $v_i>1$.
\end{definition}

\begin{example}\label{examplecritical} \rm
With notation as before, we take $U=K^2$ and $\Phi$ such that
$\Phi(0)\neq 0$. Let
 \begin{align*}
&(1)\quad f=x^2-y^2 \qquad\text{and}\qquad g=x^2, \quad\text{or} \\
&(2)\quad f=x^2+x^3-y^2 \qquad\text{and}\qquad g=x^2.
\end{align*}
In both cases we obtain an adapted embedded resolution
$\sigma:X_K\to K^2$ simply by blowing up at the origin of $K^2$.
Consider the chart $V$ of $X_K$ with coordinates $x_1,y_1$ given by
$x=x_1, y= x_1y_1$. On $V$ the exceptional curve $E_1$ is given by
$x_1=0$ and $\rho= (f/g)\circ \sigma$ is given by
$$
(1)\quad \rho = 1-y_1^2 \qquad\text{or} \qquad (2)\quad \rho =
1+x_1-y_1^2,
$$
respectively. The point $(x_1,y_1)=(0,0)$ is a critical point of
$\rho$ in case (1) and a non-critical special point of $\rho$ in
case (2). One easily verifies that in both cases there are no other
special points.
\end{example}

Note that in the classical theory of Igusa, studying expansions of
polynomial/ana\-ly\-tic functions $f$, a non-critical special point
cannot occur. Indeed, with the notation of Definition
\ref{specialpoints}, then all $v_i=1$ for a point $b\in X_K$
satisfying $\rho(b)\in K^\times$.

\begin{lemma}
The set $\rho\big(\mathcal {S}\cap\supp(\sigma^*\Phi)\big)$ is
finite.
\end{lemma}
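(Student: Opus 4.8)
The plan is to dissect $\mathcal{S}\cap\supp(\sigma^{*}\Phi)$ according to the stratification of $X_{K}$ by the components $E_{i}$, to recognise on each stratum the relevant points as critical points of the restriction of $\rho$ to a suitable intersection of components, and then to invoke Lemma \ref{finitecriticalvalues}.

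First I would reduce to the values of $\rho$ in $K^{\times}$: a non-critical special point has $\rho$-value in $K^{\times}$ by Definition \ref{specialpoints}, while the critical points of $\rho$ whose value is not in $K^{\times}$ contribute at most the two values $0,\infty$. So it suffices to show that $\rho\big(\mathcal{S}\cap\supp(\sigma^{*}\Phi)\big)\cap K^{\times}$ is finite. I would use the finite cover of $\supp(\sigma^{*}\Phi)$ by the polydiscs $B_{j}$ ($j\in J$) of Subsection \ref{subsection auxexpansions}, chosen small enough that the components of $\sigma^{-1}D_{K}$ meeting $B_{j}$ are exactly the coordinate hyperplanes $\{y_{i}=0\}$, $i=1,\dots,r$, through the centre $b_{j}$ (this is possible since only finitely many $E_{i}$ meet $\sigma^{-1}(U_{\Phi})$, by the Convention in Remark \ref{convention}). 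If $j\in J_{0}\cup J_{\infty}$, then on $B_{j}$ the morphism $\rho$ equals $c\prod_{i=1}^{r}y_{i}^{M_{i}}$ or its inverse, with all $M_{i}\geq 1$; at a point of $B_{j}$ where this monomial is a unit all coordinates $y_{1},\dots,y_{r}$ are non-zero, so that point lies on no $E_{i}$ and no partial derivative of $\rho$ vanishes there, hence $B_{j}$ contains no point of $\mathcal{S}$ with $\rho$-value in $K^{\times}$. Consequently every point of $\mathcal{S}\cap\supp(\sigma^{*}\Phi)$ with value in $K^{\times}$ lies in one of the finitely many charts $B_{j}$ with $\rho(b_{j})\in K^{\times}$, on the compact closure of which $\rho$ is a genuine $K^{\times}$-valued $K$-analytic function.

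Next comes the key local reduction. Let $b$ be such a point, lying in the open stratum $E_{I}^{\circ}$, and set $I_{1}=\{i\in I\mid v_{i}>1\}$, with the convention $E_{\emptyset}:=X_{K}$. In the local coordinates of Theorem \ref{thresolsing} at $b$ the submanifold $E_{I_{1}}=\bigcap_{i\in I_{1}}E_{i}$ is cut out by $\{y_{i}=0:i\in I_{1}\}$, and its tangent space at $b$ is spanned by the $\partial/\partial y_{j}$ with $j\notin I_{1}$. Whether $b$ is a critical point of $\rho$ (so $d\rho_{b}=0$) or a non-critical special point of $\rho$ (so, by Definition \ref{specialpoints}, the linear part of $\rho-\rho(b)$ lies in the span of the $y_{i}$ with $v_{i}>1$, that is $\partial\rho/\partial y_{j}(b)=0$ for all $j\notin I_{1}$), in both cases $d\rho_{b}$ annihilates $T_{b}E_{I_{1}}$; in other words $b$ is a critical point of the $K$-analytic map $\rho|_{E_{I_{1}}}$. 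Hence
\[
\mathcal{S}\cap\supp(\sigma^{*}\Phi)\cap\rho^{-1}(K^{\times})\ \subseteq\ \bigcup_{\substack{I_{1}\subseteq T\\ v_{i}>1\ \forall i\in I_{1}}}\operatorname{Crit}\big(\rho|_{E_{I_{1}}}\big),
\]
where the case $I_{1}=\emptyset$ recovers the critical locus $C_{\rho}$ of $\rho$ itself.

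To conclude, for each index set $I_{1}$ as above I would cover the compact set $E_{I_{1}}\cap\supp(\sigma^{*}\Phi)$ by finitely many coordinate charts of the manifold $E_{I_{1}}$; on each such chart $\rho|_{E_{I_{1}}}$ is $K^{\times}$-valued and $K$-analytic, contributing a single value if it is constant there and, by Lemma \ref{finitecriticalvalues}, only finitely many critical values otherwise. Since only finitely many components $E_{i}$ meet $\sigma^{-1}(U_{\Phi})\supseteq\supp(\sigma^{*}\Phi)$, only finitely many $I_{1}$ are relevant, and therefore the union displayed above has finite image under $\rho$; together with the two possible values $0$ and $\infty$ this gives the finiteness of $\rho\big(\mathcal{S}\cap\supp(\sigma^{*}\Phi)\big)$. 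I expect the only delicate step to be the local reduction in the previous paragraph: one must check that the condition defining a non-critical special point is independent of the chosen coordinates and translates exactly into ``$b\in\operatorname{Crit}(\rho|_{E_{I_{1}}})$'', which in turn hinges on having shrunk the charts enough that the weights $v_{i}$, and hence the set $I_{1}$, are unambiguously determined by the components passing through $b$.
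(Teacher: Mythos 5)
Your proof is correct and follows essentially the same route as the paper: both arguments identify a special point as a critical point of the restriction of $\rho$ to the intersection of the components $E_i$ with $v_i>1$ passing through it, dispose of the values $0$ and $\infty$ separately, and conclude by compactness together with Lemma \ref{finitecriticalvalues}. Your packaging via the finitely many global strata $E_{I_1}$ (rather than the paper's chart-by-chart sets $\mathcal{S}_b$) makes the compactness step slightly cleaner, but it is the same argument.
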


\begin{proof} In order to simplify notation, we assume for each point $b\in \supp(\sigma^*\Phi)$  that exactly
$y_{k+1},\dots,y_n$ satisfy $v_i>1$, where $k\geq 0$. (This can be
achieved after a permutation of the coordinates $y$ in $B$.) We
consider in $B$ the subset
$$\mathcal {S}_b := \{a\in B\cap \mathcal {S} \mid a \text{ has local coordinates } (a_1,a_2,\dots,a_k,0,\dots,0)\} .
$$
The union of all these $\mathcal {S}_b$ contains $\mathcal
{S}\cap\supp(\sigma^*\Phi)$. Hence, by compactness of
$\supp(\sigma^*\Phi)$, it is enough to show that each $\rho(\mathcal
{S}_b)$ is finite.

\smallskip
Note that there are only finitely many points $b\in X_K$ for which
all $v_i>1$, because they correspond to the (finitely many)
intersection points of exactly $n$ of the components $E_j, j\in T,$
for which $E_j \cap \supp(\sigma^*\Phi) \neq \emptyset$. Therefore,
we may take $k\geq 1$ above.

Assume first that $\rho(b)\in K^\times$, and write $\rho$ in the
local coordinates $y$ as $$w(y_1,\dots,y_k) + w'(y_1,\dots,y_n),$$
where $w$ depends only on $y_1,\dots,y_k$ and $w'$ belongs to the
ideal generated by $y_{k+1},\dots,y_n$. (In particular the constant
term of $w$ is precisely $\rho(b)$.) By definition of $\mathcal
{S}$, the points $a=(a_1,a_2,\dots,a_k,0,\dots,0)$ in $\mathcal
{S}_b$ satisfy
$$0=\frac{\partial \rho}{\partial y_i}(a) =
\frac{\partial w}{\partial y_i}(a_1,\dots,a_k)$$
 for all $i=1,\dots, k$. Hence, these are critical points of the function $w$, and then,
by Lemma \ref{finitecriticalvalues}, we conclude that $w(\mathcal
{S}_b) = \rho(\mathcal {S}_b)$ is finite.

When $\rho(b)=0$, the function $\rho$ is monomial in the local
coordinates $y$, and hence $\rho(B\cap\mathcal {S})$ is $\{0\}$ or
empty. Similarly, when $\rho(b)=\infty$, we have that
$\rho(B\cap\mathcal {S})$ is $\{\infty\}$ or empty.
\end{proof}

\begin{definition}\label{specialvalues}\rm We denote
$\mathcal {V} := \rho\big(\mathcal {S}\cap\supp(\sigma^*\Phi)\big)
\setminus \{\infty\}$, the set of {\em special values}.
\end{definition}

Here we discard $\infty$ in order to simplify the notation for the
summation set in the theorems below. In Example
\ref{examplecritical}, we have twice $\mathcal {V} = \{1\}$.


\begin{theorem}\label{thmEinfty}
Let $K$ be an $\mathbb{R}$-field. 
 We take $\frac{f}{g}:U\to K$ and $\Phi$ as in Section \ref{section preliminaries}.
 Let $\mathcal {V}$ be the set of special values as in
Definition \ref{specialvalues}.

(1) If $Z_{\Phi}\left( \omega;f/g - c\right)$ has a negative pole
for some $c\in \mathcal {V}$ and some $\omega$, then
$$
E_{\Phi}\left( z;f/g\right) \approx \sum_{c\in \mathcal {V}}
{\displaystyle\sum\limits_{\gamma_c<0}}\
{\displaystyle\sum\limits_{m=1}^{m_{\gamma_c}}}
e_{\gamma_c,m,c}\left( \frac{z}{\left\vert z\right\vert }\right)
\Psi(c\cdot z)
 \left\vert z\right\vert
_{K}^{\gamma_c}\left( \ln \left\vert z\right\vert _{K}\right)^{m-1}
\text{ as }\left\vert z\right\vert _{K} \rightarrow \infty,
$$
where $\gamma_c$ runs trough all the negative poles of
$Z_{\Phi}\left( \omega;f/g-c\right) $, for all
$\omega\in\Omega\left( K^{\times}\right)$, and with $m_{\gamma_c}$
the order of $\gamma_c$. Finally, each $e_{\gamma_c,m,c}$ is a
smooth function on $\{ u\in K^{\times}\mid$ $\left\vert
u\right\vert_{K}=1\}$.

(2) Writing $\omega=\omega_s\chi_l(ac)$, we have that
$e_{\gamma_c,m_{\gamma_c},c}=0$ if $|\gamma_c| \in
 1+\frac{1}{[K:\mathbb{R}]} (|l| +
2\mathbb{Z}_{\geq 0})$.
\end{theorem}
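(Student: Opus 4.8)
The plan is to pull $E_{\Phi}(z;f/g)$ back to the resolution space and use the decomposition recalled in Subsections~\ref{subsection set-up resolution} and~\ref{subsection auxexpansions}, namely $E_{\Phi}(z;f/g)=E_{0}(z)+E_{\infty}(z)+\sum_{b}(\text{local contribution at }b)$, where $b$ runs over the finitely many chosen centres with $\rho(b)\in K^{\times}$. As $|z|_{K}\to\infty$ the term $E_{\infty}(z)$ is negligible, since by (the proof of) Proposition~\ref{PropEinfty} it behaves like a Schwartz function at infinity; and by Proposition~\ref{propE0} the term $E_{0}(z)$ has the asymptotic expansion $\sum_{\gamma_{0}<0}\sum_{m=1}^{m_{\gamma_{0}}}e_{\gamma_{0},m,0}(z/|z|)\,|z|_{K}^{\gamma_{0}}(\ln|z|_{K})^{m-1}$, where $\gamma_{0}$ runs over the negative poles of $Z_{\Phi}(\omega;f/g)=Z_{\Phi}(\omega;f/g-0)$. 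Since $\Psi(0\cdot z)=1$, this is exactly the $c=0$ summand of the claimed expansion. Here one must observe that $0\in\mathcal{V}$ whenever this summand is non-trivial: if $f/g$ has a zero $x_{0}\in\supp\Phi$ with $d(f/g)(x_{0})=0$ (for instance a singular point of the zero locus of the numerator, at which $g$ does not vanish), then every point of $\sigma^{-1}(x_{0})$ is a critical point of $\rho$, so $0\in\rho(\mathcal{S}\cap\supp(\sigma^{*}\Phi))$; and in the remaining case the only negative pole of $Z_{\Phi}(\omega;f/g)$ is $-1$ with trivial $\chi$, whose top coefficient vanishes by Proposition~\ref{propE0}(2), so that $E_{0}$ contributes nothing.

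Next I would treat the local contributions with $\rho(b)=c\in K^{\times}$, grouping them by the value $c$. For a centre $b$ over $c$ that is \emph{not} special, the nonzero linear form $\ell$ in $\rho=c+\ell(y)+h(y)$ has a nonzero coefficient on some coordinate $y_{j}$ with $v_{j}=1$; replacing $y_{j}$ by $\rho-c$ is a change of coordinates leaving $\prod_{i\neq j}|y_{i}|_{K}^{v_{i}-1}$ unchanged, and turns the contribution (up to the unimodular factor $\Psi(c\cdot z)$) into an integral of the type treated in Lemma~\ref{Lemma4bis}(1); hence such centres contribute $O(|z|_{K}^{-k})$ for every $k$, i.e. nothing to the expansion. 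Writing $\Phi_{c}$ for the part of $\sigma^{*}\Phi$ supported near the \emph{special} centres over $c$, and using that $\Psi$ is additive, the remaining contributions over such centres sum to $\Psi(c\cdot z)\int_{X_{K}}\Phi_{c}(y)\,\Psi\big(z(\rho(y)-c)\big)\,|\sigma^{*}dx|_{K}$. After shrinking the surrounding polydiscs so that $\rho$ maps each into a neighbourhood of $c$ containing no other critical value of $\rho$ --- possible by Lemma~\ref{finitecriticalvalues} --- we have $C_{\rho-c}\cap\supp\Phi_{c}\subset(\rho-c)^{-1}\{0\}$. Then the argument in the proof of Proposition~\ref{propE0}, applied with $\rho-c$ (which is $K$-valued near $\supp\Phi_{c}$) and $\Phi_{c}$ in place of $\rho$ and $\Phi_{0}$ --- this internally uses a further embedded resolution of $(\rho-c)^{-1}\{0\}$ refining $\sigma$ near $\supp\Phi_{c}$, in whose monomial charts at least one exponent is positive because $\rho-c$ vanishes there --- yields that $\int_{X_{K}}\Phi_{c}(y)\,\Psi\big(z(\rho(y)-c)\big)\,|\sigma^{*}dx|_{K}$ has an asymptotic expansion $\sum_{\mu<0}\sum_{m}A_{\mu,m,c}(z/|z|)\,|z|_{K}^{\mu}(\ln|z|_{K})^{m-1}$ as $|z|_{K}\to\infty$, with $\mu$ running over the negative poles of the associated local zeta function and with $A_{\mu,m_{\mu},c}=0$ whenever $|\mu|\in 1+\frac{1}{[K:\mathbb{R}]}(|l|+2\mathbb{Z}_{\geq 0})$ by Proposition~\ref{propE0}(2).

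The remaining point --- which I expect to be the main obstacle --- is to identify these exponents $\mu$ with the negative poles of the \emph{global} zeta function $Z_{\Phi}(\omega;f/g-c)$. Decomposing $Z_{\Phi}(\omega;f/g-c)=\int_{X_{K}}(\sigma^{*}\Phi)(y)\,\omega\big((\rho-c)(y)\big)\,|\sigma^{*}dx|_{K}$ as usual, one checks: near $\rho^{-1}\{0\}$ and near centres lying over values $c'\neq c$, the function $\rho-c$ is a unit and contributes no poles; near $\rho^{-1}\{\infty\}$, the function $\rho-c$ has the same polar locus as $\rho$ (because $c\neq 0$), so it contributes only \emph{positive} poles; near a non-special centre over $c$, the coordinate change above makes $\rho-c$ a coordinate, contributing at most the single pole $-1$ with trivial $\chi$, whose top coefficient is forced to $0$ by part~(2) --- consistently with the $O(|z|_{K}^{-k})$ estimate for such centres; and near the special centres over $c$ one gets exactly the negative poles produced by the further embedded resolution used above, because by Definition~\ref{specialpoints} the special points over $c$ are precisely the \lq bad\rq\ points of the zero locus of $\rho-c$ (its singular points together with the points where it is positioned non-transversally with respect to the components $E_{i}$ having $v_{i}>1$). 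Hence the negative poles of $Z_{\Phi}(\omega;f/g-c)$ are exactly the exponents $\mu$ above, possibly together with the harmless pole $-1$, so the special-centre contribution over $c$ has the form $\Psi(c\cdot z)\sum_{\gamma_{c}<0}\sum_{m=1}^{m_{\gamma_{c}}}e_{\gamma_{c},m,c}(z/|z|)\,|z|_{K}^{\gamma_{c}}(\ln|z|_{K})^{m-1}$ with $\gamma_{c}$ over the negative poles of $Z_{\Phi}(\omega;f/g-c)$. Summing over $c\in\mathcal{V}$ (all other values of $c$ producing only non-special, hence negligible, contributions) and noting that there is no constant term to collect --- every piece being either $O(|z|_{K}^{-k})$ for all $k$ or an asymptotic series with strictly negative exponents --- gives part~(1); part~(2) has been obtained along the way.
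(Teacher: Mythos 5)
Your proof is correct and follows essentially the same route as the paper's: discard $E_{\infty}$ and the non-special $E_{3}$-type contributions (via a coordinate change and Lemma \ref{Lemma4bis}), and apply the machinery of Proposition \ref{propE0} to $\rho-c$ for each special value $c$, which requires a further embedded resolution making $\rho-c$ and the measure simultaneously monomial. The only organizational difference is that the paper constructs a single further resolution $\tilde{\sigma}:X'_K\to X_K$ of all the fibres $\rho^{-1}\{c\}$, $c\in\mathcal{V}\setminus\{0\}$, at the outset, whereas you invoke the refinement locally inside the application of Proposition \ref{propE0}; your explicit matching of the resulting local exponents with the negative poles of the global $Z_{\Phi}(\omega;f/g-c)$ makes precise a step the paper leaves implicit.
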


\begin{proof}
We construct an embedded resolution $\tilde \sigma: X'_K \to X_K$ of
$$D'_K:= \sigma^{-1}(D_K) \cup (\cup_{c\in \mathcal
{V}\setminus\{0\}} \rho^{-1}\{c\}), $$ where we assume that $\tilde
\sigma$ is a $K$-analytic isomorphism outside the inverse image of
$D'_K$. Since $c\neq 0,\infty$ above, we have in particular that
$\tilde \sigma$ is an isomorphism on sufficiently small
neighborhoods of $\rho^{-1}\{0\}$ and $\rho^{-1}\{\infty\}$.

Instead of working with $\sigma$ and $\rho$, we now work with the
compositions $\sigma'=\sigma \circ \tilde \sigma: X'_K \to U$ and
$\rho'= \rho \circ \tilde \sigma: X'_K \to \mathbb{P}^1$. We also
use now $y=(y_1,\dots,y_n)$ as local coordinates on $X'_K$.

Again, we study
$$E_{\Phi}\left(
z;f/g\right)= \int_{X'_K\setminus (\sigma')^{-1} D_K}
\big((\sigma')^* \Phi\big)(y)\, \Psi \big(z\cdot \rho'(y)\big)\,
|(\sigma')^* dx|_K$$
 via local contributions, similarly as in Subsection \ref{subsection
auxexpansions}, but now fixing a more refined appropriate
decomposition of $(\sigma')^* \Phi$.

 We cover
the (compact) support of $(\sigma')^* \Phi$ by finitely many
polydiscs $B_j, j\in J,$ as before, making sure that the (compact)
fibres $(\rho')^{-1}\{c\}\cap \supp ((\sigma')^* \Phi)$, for all
$c\in \mathcal {V}\cup\{0,\infty\}$,
 are completely covered by certain $B_j, j\in J_c \subset J,$
 with centre $b_j$
mapped by $\rho$ to $c$.
(Note that possibly $0 \notin \mathcal {V}$, as in Example
\ref{examplecritical}.)
 We define $\Phi_c$, for $c\in \mathcal
{V}\cup\{0,\infty\}$, as the restriction of $(\sigma')^* \Phi$ to
$\cup_{j\in J_c}B_j$,
 modified by the partition of the unity that induces
the functions $\Theta$ of Subsection \ref{subsection set-up
resolution} on the local charts $B_j$.

Besides the contributions $E_0(z)$ and $E_\infty(z)$ as in
Subsection \ref{subsection auxexpansions}, we now define also for
$c\in \mathcal {V}\setminus\{0\}$ the contribution
$$E_{c}\left(
z\right)= \int_{X'_K\setminus (\sigma')^{-1} D_K} (\Phi_c)(y)\, \Psi
\big(z\cdot \rho'(y)\big)\, |(\sigma')^* dx|_K .$$

Then we have that
 $E_{\Phi}\left( z;f/g\right) $ is the sum of $E_{\infty}\left(  z\right)$, $E_{0}\left(  z\right)$, the $E_{c}\left(  z\right)$ with $c\in \mathcal
{V}\setminus\{0\}$, and of a linear combination of elementary
integrals of type $E_{3}\left( z\right) $ as in (\ref{E_3}) for
which $c=\rho'(b)\notin\mathcal{V}$. We first consider this last
type of contributions.
We rewrite
$$
E_{3}\left( z\right) = \Psi\big( z \cdot c
\big){\displaystyle\int\limits_{K^{n}}} \Theta\left( y\right)
\Psi\big( z \cdot (u(y)-c) \big) \left( {\displaystyle\prod_{i=1}^n}
\left\vert y_{i}\right\vert_K ^{v_{i}-1}\right)
 \left\vert
dy\right\vert _{K}. \label{problemintegral}
$$
 Then, maybe after restricting
the support of $\Theta$, the integral above can be rewritten in the
form (\ref{E'_3}) by a change of coordinates. By Lemma
\ref{Lemma4bis}, there is no contribution to the asymptotic
expansion as $\left\vert z\right\vert _{K}\rightarrow \infty$.

Next, we saw in the proof of Proposition \ref{PropEinfty} that
$E_{\infty}\left( z\right)$ behaves like a Schwartz function as
$\left\vert z\right\vert _{K}\rightarrow \infty$, and hence also
does not contribute to the asymptotic expansion of $E_{\Phi}\left(
z;f/g\right)$. The expansion of $E_{0}\left( z\right)$ was
calculated in Proposition \ref{propE0}.
Finally, we apply (the proof of) Proposition \ref{propE0} to obtain
for $c\in \mathcal {V}\setminus\{0\}$ the similar expansion
$$\Psi(c\cdot z)
{\displaystyle\sum\limits_{\gamma_c<0}}\
{\displaystyle\sum\limits_{m=1}^{m_{\gamma_c}}}
e_{\gamma_c,m,c}\left( \frac{z}{\left\vert z\right\vert }\right)
 \left\vert z\right\vert
_{K}^{\gamma_c}\left( \ln \left\vert z\right\vert _{K}\right)^{m-1}
\text{ as }\left\vert z\right\vert _{K} \rightarrow \infty
$$
of $E_{c}\left( z\right)$, together with the vanishing property in
(2). All together, this yields the expansion in the statement,
summing over all $c\in \mathcal {V}$. Note that indeed, in the
special case where $0 \notin \mathcal {V}$, also $E_{0}\left(
z\right)$ behaves like a Schwartz function as $\left\vert
z\right\vert _{K}\rightarrow \infty$ and does not contribute to the
total asymptotic expansion (by Proposition \ref{propE0}(ii) or Lemma
\ref{Lemma4bis}).
\end{proof}

The following similar result in the $p$-field case is proven
analogously, now invoking Propositions \ref{Prop1p} and \ref{Prop2p}
instead of their real versions.

\begin{theorem}\label{thmEinfty-p}
Let $K$ be a $p$-field. 
 We take $\frac{f}{g}:U\to K$ and $\Phi$ as in Section \ref{section preliminaries}.
 Let $\mathcal {V}$ be the set of special values as in
Definition \ref{specialvalues}.

(1) If $Z_{\Phi}\left( \omega;f/g - c\right)$ has a pole with
negative real part for some $c$ and some $\omega$, then
$$
E_{\Phi}\left( z;f/g\right) = \sum_{c\in \mathcal {V}}
{\displaystyle\sum\limits_{\gamma_c<0}}\
{\displaystyle\sum\limits_{m=1}^{m_{\gamma_c}}}
e_{\gamma_c,m,c}\left( ac\ z \right) \Psi(c\cdot z)
 \left\vert z\right\vert
_{K}^{\gamma_c}\left( \ln \left\vert z\right\vert _{K}\right)^{m-1}
$$
for sufficiently large $\left\vert z\right\vert _{K}$, where
$\gamma_c$ runs trough all the poles$\mod 2\pi\sqrt{-1}/\ln q$ with
negative real part of $Z_{\Phi}\left( \omega;f/g-c\right) $, for all
$\omega\in\Omega\left( K^{\times}\right)$, and with $m_{\gamma_c}$
the order of $\gamma_c$. Finally, each $e_{\gamma_c,m,c}$ is a
locally constant function on $R_K^\times$.

(2) Writing $\omega=\omega_s\chi(ac)$, we have that
$e_{\gamma_c,m_{\gamma_c},c}=0$ if $\chi=1$ and $\gamma_c= -1\mod
2\pi i / \ln q$.
\end{theorem}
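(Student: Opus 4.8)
The plan is to run, \emph{mutatis mutandis}, the proof of Theorem \ref{thmEinfty} over a $p$-field, systematically replacing Propositions \ref{propE0} and \ref{PropEinfty} by their non-archimedean analogues Propositions \ref{Prop1p} and \ref{Prop2p}, and Lemma \ref{Lemma4bis}(1) by Lemma \ref{Lemma4bis}(2); correspondingly, every asymptotic expansion ``as $|z|_K\to\infty$'' becomes an exact identity ``for $|z|_K$ sufficiently large''. Concretely, I would first construct an embedded resolution $\widetilde{\sigma}\colon X'_K\to X_K$ of
$$D'_K:=\sigma^{-1}(D_K)\cup\Big(\bigcup_{c\in\mathcal{V}\setminus\{0\}}\rho^{-1}\{c\}\Big),$$
which is a $K$-analytic isomorphism outside $\widetilde{\sigma}^{-1}(D'_K)$, and work from now on with $\sigma'=\sigma\circ\widetilde{\sigma}\colon X'_K\to U$ and $\rho'=\rho\circ\widetilde{\sigma}\colon X'_K\to\mathbb{P}^1$. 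Since each $c$ above lies in $K^\times$, $\widetilde{\sigma}$ is an isomorphism over small neighbourhoods of $\rho^{-1}\{0\}$ and of $\rho^{-1}\{\infty\}$, and $\rho'-c$ is locally a monomial (times a unit) near $(\rho')^{-1}\{c\}$; in particular $\sigma'$ is an adapted embedded resolution of the zero and pole divisor of $f/g-c$.

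Next I would pull $E_{\Phi}(z;f/g)$ back to $X'_K$ and decompose $(\sigma')^{\ast}\Phi$. Since $K$ is a $p$-field one may cover the compact set $\supp\big((\sigma')^{\ast}\Phi\big)$ by finitely many \emph{disjoint} compact-open polydiscs $B_j$, grouped so that for every $c\in\mathcal{V}\cup\{0,\infty\}$ the (compact) fibre $(\rho')^{-1}\{c\}\cap\supp\big((\sigma')^{\ast}\Phi\big)$ is covered by those $B_j$, $j\in J_c$, whose centre is sent by $\rho'$ to $c$; no partition of unity is needed, and $\Phi_c$ is just the restriction of $(\sigma')^{\ast}\Phi$ to $\bigcup_{j\in J_c}B_j$. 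This writes $E_{\Phi}(z;f/g)$ as the sum of $E_0(z)$, $E_\infty(z)$, the contributions $E_c(z)=\int_{X'_K\setminus(\sigma')^{-1}D_K}\Phi_c(y)\,\Psi\big(z\cdot\rho'(y)\big)\,|(\sigma')^{\ast}dx|_K$ for $c\in\mathcal{V}\setminus\{0\}$, and a finite $\mathbb{C}$-linear combination of elementary integrals of type $E_3$ as in (\ref{E_3}) centred at points $b$ with $\rho'(b)=c\notin\mathcal{V}$.

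Then I would dispose of each piece for $|z|_K$ large. For the $E_3$-terms with $c\notin\mathcal{V}$, rewriting
$$E_3(z)=\Psi(z\cdot c)\int_{K^n}\Theta(y)\,\Psi\big(z\cdot(u(y)-c)\big)\Big(\prod_{i=1}^{n}|y_i|_K^{v_i-1}\Big)|dy|_K$$
and using that $b$ is \emph{not} a special point, the linear part of $u(y)-c$ involves some coordinate $y_i$ with $v_i=1$; after shrinking $\supp\Theta$ and a $K$-analytic change of variables the integral takes the form (\ref{E'_3}), with no factor $|y_1|_K^{v_1-1}$, $v_1>1$, on the oscillating coordinate, so it vanishes for large $|z|_K$ by Lemma \ref{Lemma4bis}(2). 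For $E_\infty(z)$: exactly as in the proof of Proposition \ref{Prop2p}, $F_\infty\in\mathcal{G}^{\ast}$ and hence $E_\infty\in\mathcal{G}$ (Definition \ref{functionclasspadic}), so $E_\infty(z)=0$ for large $|z|_K$ (consistently with $E_\infty(0)=C$). For $E_0(z)$: if $0\in\mathcal{V}$ it has, for large $|z|_K$, the expansion of Proposition \ref{Prop1p}(1), with the top-order coefficients killed by Proposition \ref{Prop1p}(2); if $0\notin\mathcal{V}$ then $\rho^{-1}\{0\}$ carries no special (hence no critical) point, the local integrals making up $E_0(z)$ are of the form (\ref{E'_3}), and $E_0(z)=0$ for large $|z|_K$ by Lemma \ref{Lemma4bis}(2). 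Finally, for each $c\in\mathcal{V}\setminus\{0\}$ one has $E_c(z)=\Psi(c\cdot z)\,E_0^{(c)}(z)$, where $E_0^{(c)}$ is the ``$E_0$-contribution'' for the meromorphic function $f/g-c$ (with resolution $\sigma'$ and morphism $\rho'-c$, whose zero fibre over $0$ is $(\rho')^{-1}\{c\}$, covered by $\Phi_c$); applying the proof of Proposition \ref{Prop1p} to it yields
$$E_c(z)=\Psi(c\cdot z)\sum_{\gamma_c<0}\ \sum_{m=1}^{m_{\gamma_c}}e_{\gamma_c,m,c}(ac\,z)\,|z|_K^{\gamma_c}(\ln|z|_K)^{m-1}$$
for large $|z|_K$, together with the vanishing of $e_{\gamma_c,m_{\gamma_c},c}$ when $\chi=1$ and $\gamma_c=-1\mod 2\pi i/\ln q$; here one uses, as in Remark \ref{altern}, that the poles of negative real part of $Z_{\Phi}(\omega;f/g-c)$ are exactly those produced by the local contributions over the charts covering $(\rho')^{-1}\{c\}$. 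Summing over $c\in\mathcal{V}$ (the term $c=0$ being $E_0$) gives the formula in (1), and (2) is then Proposition \ref{Prop1p}(2) applied to $f/g-c$.

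The step I expect to be the main obstacle is, once again, the $E_3$-step: showing that the non-special points with $\rho(b)\in K^\times$ contribute nothing to the expansion at infinity, while the finitely many special values in $\mathcal{V}$ account for exactly those local situations in which an oscillating coordinate is forced to drag along a companion factor $|y_i|_K^{v_i-1}$ with $v_i>1$. This is precisely the new phenomenon absent from Igusa's classical theory (cf. Remark \ref{diffform}), and it is what makes both the bookkeeping with the sets $\mathcal{S}$ and $\mathcal{V}$ and the passage to the refined resolution $\widetilde{\sigma}$ separating the fibres $\rho^{-1}\{c\}$, $c\in\mathcal{V}$, genuinely necessary; the rest is a routine transcription of the $\mathbb{R}$-field argument.
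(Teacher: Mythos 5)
Your proposal is correct and coincides with the paper's own proof: the paper establishes Theorem \ref{thmEinfty-p} by the single remark that it follows from the proof of Theorem \ref{thmEinfty} upon substituting Propositions \ref{Prop1p} and \ref{Prop2p} (and Lemma \ref{Lemma4bis}(2)) for their archimedean counterparts, which is exactly the transcription you carry out. Your added details --- disjoint compact-open charts in place of a partition of unity, the exact vanishing of $E_\infty$ and of the non-special $E_3$-contributions for large $|z|_K$, and the explicit observation that non-specialness forces the linear part to involve a coordinate with $v_i=1$ so that the integral reduces to the form (\ref{E'_3}) --- are faithful elaborations of the archimedean argument being transposed.
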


\begin{remark} \rm
The form of our expansions in Theorems \ref{Esmallz-p} and
\ref{thmEinfty-p} is consistent with the results in \cite{CGH} and
\cite{CL}, where a similar form of such expansions is derived for
$p$ big enough, in a more general setting, but without information
on the powers of $|z|_K$ and $\ln \left\vert z\right\vert _{K}$.
\end{remark}

\subsection{Examples}

\begin{example}\label{examplecritical1}\rm  We continue the study of case (1) of Example
\ref{examplecritical} when $K$ is a $p$-field. In that case the map
$\sigma$ is also an embedded resolution of $\rho^{-1}\{1\}$. The
exceptional curve $E_1$ has datum $(N_1,v_1)=(0,2)$.

\smallskip
(1) The fibre $\rho^{-1}\{0\}$ is the strict transform of
$f^{-1}\{0\}$ and consists of two (disjoint) components with datum
$(1,1)$. Hence the only possible negative real part of a pole of
$Z_{\Phi}\left( \omega;f/g\right) $ is $-1$. Moreover, by Remark
\ref{remarkpoles-p}, such a pole occurs only if $\chi$ is trivial,
and it must be of the form $-1+\frac{2\pi\sqrt{-1}}{\ln q} k, k\in
\mathbb{Z}$.

The fibre $\rho^{-1}\{1\}$ is given (as a set) by $y_1=0$ in the
chart $V$ and it has datum $(2,1)$, with respect to the function
$\rho -1$. Then the only possible negative real part of a pole of
$Z_{\Phi}\left( \omega;f/g-1\right) $ is $-\frac 12$. By Theorem
\ref{Theorem1} and Remark \ref{remarkpoles-p}, it can only occur if
the order of $\chi$ is $1$ or $2$, and it must be of the form
$-\frac 12 +\frac{\pi\sqrt{-1}}{\ln q} k, k\in \mathbb{Z}$. (Clearly
all poles are of order $1$.)

Hence Theorem \ref{thmEinfty-p} predicts the following possible
terms in the expansion of $E_{\Phi}\left( z;f/g\right)$ for
sufficiently large $|z|_K$:
 $\Psi(z) |z|_K^{-1/2}$ and $\Psi(z) |z|_K^{-1/2} (-1)^{ord
(z)}$. Note that there should be no term $|z|_K^{-1}$ because of
part $(2)$ of that theorem.

\medskip
As an illustration, we consider the concrete case $K=\mathbb{Q}_p$
with $p\neq 2$, and $\Phi$ the characteristic function of
$\mathbb{Z}_p^2$. Then, using Theorem \ref{TheoremDenef} or with
elementary calculations, one can verify that
\begin{equation}\label{Zinfirstexample}
Z_{\Phi}\left(\omega;f/g\right)=
\frac{p^{1+s}+p^2(p-2)p^{-s}+p^{2-2s}-2p+1}{(p+1)(p^{1+s}-1)(p^{1-2s}-1)}
\end{equation}
 and
$$Z_{\Phi}\left(\omega;f/g-1\right)=
\frac{(p-1)^2}{(p^{1+2s}-1)(p^{1-2s}-1)}.
 $$
And with some more effort, one can compute the following explicit
expression for $E_{\Phi}\left( z;f/g\right)$ in this case:
$$ E_{\Phi}\left( z;f/g\right)= \frac{p}{p+1} \Psi(z)
\eta_p(-z)\, |z|_{\mathbb{Q}_p}^{-1/2}
$$
 for sufficiently large $|z|_{\mathbb{Q}_p}$. Here
\[
\eta_p(a) =\left\{
\begin{array}
[c]{lll} 1 & \text{if} & ord(a) \text{ is even}
\\
&  &   \\
\left(\frac{a_0}p\right) & \text{if} & ord(a) \text{ is odd and }
p\equiv 1\!\mod 4
\\
&  &   \\
\sqrt{-1} \left(\frac{a_0}p\right) & \text{if} & ord(a) \text{ is
odd and } p\equiv 3\!\mod 4 ,
\end{array}
\right.
\]
where $a=p^{ord(a)}(a_0+a_1p+a_2p^2+\dots)$ and the $a_i \in
\{0,\dots, p-1\}$. At first sight, this may seem different from the
description above, but $\eta_p(-z)
 |z|_{\mathbb{Q}_p}^{-1/2}$ can be written as a combination of
 $|z|_{\mathbb{Q}_p}^{-1/2}$ and
$|z|_{\mathbb{Q}_p}^{-1/2} (-1)^{ord (z)}$.

\medskip

(2) The fibre $\rho^{-1}\{\infty\}$ is the strict transform of
$g^{-1}\{0\}$ and has datum $(-2,1)$. Hence the only possible
positive real part of a pole of $Z_{\Phi}\left( \omega;f/g\right) $
is $\frac 12$. Then Theorem \ref{Esmallz-p} predicts the following
possible terms in the expansion of $E_{\Phi}\left( z;f/g\right)$ for
sufficiently small $|z|_K$:
 $\Psi(z) |z|_K^{1/2}$ and $\Psi(z) |z|_K^{1/2} (-1)^{ord
(z)}$.

In the concrete case above, one can analogously verify that this is
precisely what happens.

\end{example}

\begin{example}\label{examplecritical2}\rm We continue similarly the study of case (2) of Example
\ref{examplecritical} when $K$ is a $p$-field. In that case the
fibre $\rho^{-1}\{1\}$ is given by $x_1 -y_1^2=0$ in the chart $V$
and it has datum $(1,1)$, with respect to the function $\rho -1$. We
need to perform two more blow-ups to obtain an embedded resolution
of $\rho^{-1}\{1\}\cup E_1$. The two extra exceptional curves $E_2$
and $E_3$ have data $(N_2,v_2)=(1,3)$ and $(N_2,v_2)=(2,5)$,
respectively, with respect to the function $\rho -1$.

\smallskip
(1) The fibre $\rho^{-1}\{0\}$ is again the strict transform of
$f^{-1}\{0\}$ and consists now of one component with datum $(1,1)$.
Then again the only possible poles of
 $Z_{\Phi}\left(
\omega;f/g\right) $ with negative real part must be of the form
$-1+\frac{2\pi\sqrt{-1}}{\ln q} k, k\in \mathbb{Z}$, occurring only
if $\chi$ is trivial.

It is well known that $E_2$ does not contribute to poles of
$Z_{\Phi}\left( \omega;f/g-1\right) $ (since it intersects only one
other component); see for example \cite{D0}, \cite{Lo1}. Then, by
Theorem \ref{Theorem1} and Remark \ref{remarkpoles-p},
 the only possible poles with negative real part of
$Z_{\Phi}\left( \omega;f/g-1\right) $ are
$-1+\frac{2\pi\sqrt{-1}}{\ln q} k, k\in \mathbb{Z}$, occurring only
if $\chi$ is trivial, and $-\frac 52 +\frac{\pi\sqrt{-1}}{\ln q} k,
k\in \mathbb{Z}$, occurring only if the order of $\chi$ is $1$ or
$2$. (Clearly all poles are of order $1$.)

Hence Theorem \ref{thmEinfty-p} predicts the following possible
terms in the expansion of $E_{\Phi}\left( z;f/g\right)$ for
sufficiently large $|z|_K$:
 $\Psi(z) |z|_K^{-5/2}$ and $\Psi(z) |z|_K^{-5/2} (-1)^{ord
(z)}$. Note again that there should be no terms $|z|_K^{-1}$ and
$\Psi(z) |z|_K^{-1}$ because of part $(2)$ of that theorem.

\medskip
As an illustration, we consider now the concrete case
$K=\mathbb{Q}_p$ with $p\neq 2$, and $\Phi$ the characteristic
function of $(p\mathbb{Z}_p)^2$. The same computation as in
(\ref{Zinfirstexample}) yields
$$Z_{\Phi}\left(\omega;f/g\right)=
\frac{p^{1+s}+p^2(p-2)p^{-s}+p^{2-2s}-2p+1}{p^2(p+1)(p^{1+s}-1)(p^{1-2s}-1)}
. $$ With a more elaborate computation, using Theorem
\ref{TheoremDenef}, one can verify that
$Z_{\Phi}\left(\omega;f/g-1\right)$ is

\[
\frac{(p-1)\big(-p^{4+2s} +
(p^2-p+1)p^{4+s} + (-p^2+p-1)p^{2-s} +p^4-p^3+p^2-p+1
\big)}{p^2(p^{5+2s}-1)(p^{1+s}-1)(p^{1-2s}-1)}.
\]


\medskip

(2) The terms in the expansion of $E_{\Phi}\left( z;f/g\right)$ for
sufficiently small $|z|_K$ are as in Example
\ref{examplecritical1}(2).
\end{example}

\subsection{Estimates}

Our main theorems on asymptotic expansions for oscillatory integrals
imply estimates for them, for large and small $|z|_K$, in terms of
the \lq largest negative pole\rq\ $\beta$ and the \lq smallest
positive pole\rq\ $\alpha$ of Definition \ref{defalphabeta},
respectively. In order to formulate them, we first introduce the
orders of these poles.

\begin{definition}\label{deforderalphabeta}\rm
 We take $\frac{f}{g}:U\to K$ and $\Phi$ as in Section \ref{section preliminaries}.

(1) Take $\alpha, \beta$
 as in Definition \ref{defalphabeta}(1), depending on (the support of)  $\Phi$.
 When
$\alpha\neq+\infty$, we put
\[
\tilde m_{\alpha}:=\max_{\omega}\left\{ \text{order of
}\alpha\text{\ as pole of }Z_{\Phi}\left( \omega;f/g\right)
\right\},
\]
and when $\beta\neq-\infty$, we put
\[
\tilde m_{\beta}:=\max_{\omega}\left\{ \text{order of }\beta\text{\
as pole of }Z_{\Phi}\left( \omega;f/g\right) \right\} .
\]


(2) Whenever $T$ is finite, in particular when $f$ and $g$ are
polynomials, we can take $\alpha, \beta$
 as in Definition \ref{defalphabeta}(2), independent of $\Phi$.
 When
$\alpha\neq+\infty$, we put
\[
\tilde m_{\alpha}:=\max_{\omega,\Phi}\left\{ \text{order of
}\alpha\text{\ as pole of }Z_{\Phi}\left( \omega;f/g\right)
\right\},
\]
and when $\beta\neq-\infty$, we put
\[
\tilde m_{\beta}:=\max_{\omega,\Phi}\left\{ \text{order of
}\beta\text{\ as pole of }Z_{\Phi}\left( \omega;f/g\right) \right\}
.
\]

\noindent In both cases we call these numbers the {\em order} of
$\alpha$ and $\beta$, respectively. Note that they are less than or
equal to $n$.

\end{definition}

Taking into account Theorem \ref{Theorem3}, the estimates for small
and large $|z|_K$ below follow directly from Theorems \ref{Esmallz}
and \ref{Esmallz-p}, and Theorems \ref{thmEinfty} and
\ref{thmEinfty-p}, respectively.

\begin{theorem}\label{estimate0} We take $\frac{f}{g}:U\to K$ and $\Phi$ as in Section \ref{section
preliminaries}.

\smallskip
\noindent(1) Let $K$ be an $\mathbb{R}$-field. We put
$\delta_{\alpha}=0$ if $\alpha \notin
\frac1{[K:\mathbb{R}]}\mathbb{Z}_{>0}$
 and $\delta_{\alpha}=1$ if $\alpha\in \frac1{[K:\mathbb{R}]}\mathbb{Z}_{>0}$.

 If $Z_{\Phi}\left( \omega;f/g\right)$ has a positive
pole for some $\omega$, then there exists a positive constant $C$
such that
\[
\left\vert E_{\Phi}\left( z;f/g\right) -
{\displaystyle\int\limits_{K^{n}}} \Phi\left( x\right) \left\vert
dx\right\vert _{K}\right\vert \leq C\left\vert z\right\vert
_{K}^{_{\alpha}}\big|\ln \left\vert z\right\vert _{K}\big|^{\tilde
m_{\alpha}+\delta_\alpha-1} \text{ as }\left\vert z\right\vert _{K}
\rightarrow0.
\]

\smallskip
\noindent(2) Let $K$ be $p$-field. If $Z_{\Phi}\left(
\omega;f/g\right)$ has a pole with positive real part for some
$\omega$, then there exists a positive constant $C$ such that
\[
\left\vert E_{\Phi}\left( z;f/g\right) -
{\displaystyle\int\limits_{K^{n}}} \Phi\left( x\right) \left\vert
dx\right\vert _{K}\right\vert \leq C\left\vert z\right\vert
_{K}^{_{\alpha}}\big|\ln \left\vert z\right\vert _{K}\big|^{\tilde
m_{\alpha}-1} \text{ for sufficiently small }\left\vert z\right\vert
_{K} .
\]
\end{theorem}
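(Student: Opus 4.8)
The plan is to read off the estimate directly from the asymptotic expansion of $E_{\Phi}(z;f/g)-\int_{K^{n}}\Phi$ as $|z|_{K}\to 0$ established in Theorem~\ref{Esmallz} (for $\mathbb{R}$-fields) and Theorem~\ref{Esmallz-p} (for $p$-fields). Under the standing hypothesis that $Z_{\Phi}(\omega;f/g)$ has a pole with positive real part for some $\omega$, those theorems give
\[
E_{\Phi}(z;f/g)-\int_{K^{n}}\Phi(x)\,|dx|_{K}\ \approx\ \sum_{\gamma>0}\ \sum_{m=1}^{m_{\gamma}+\delta_{\gamma}} e_{\gamma,m}(ac\,z)\,|z|_{K}^{\gamma}\,(\ln|z|_{K})^{m-1}\qquad\text{as }|z|_{K}\to 0,
\]
where $\delta_{\gamma}=0$ in the $p$-field case, each coefficient $e_{\gamma,m}$ is bounded on the compact set $\{u:|u|_{K}=1\}$, and in the $p$-field case the relation is an honest equality for all sufficiently small $|z|_{K}$. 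By Theorems~\ref{Theorem1} and~\ref{Theorem2} every pole of $Z_{\Phi}(\omega;f/g)$ of positive real part has real part $\geq\alpha$, hence every exponent $\gamma$ occurring in the expansion satisfies $\operatorname{Re}\gamma\geq\alpha$; and $\alpha$ is attained (so is the leading exponent) as soon as $\Phi$ meets the components achieving $\alpha$ positively — this is Theorem~\ref{Theorem3}, which moreover guarantees that $\alpha$, as well as the order $\tilde m_{\alpha}$ of Definition~\ref{deforderalphabeta}, is independent of the chosen embedded resolution. Thus the term(s) of smallest real part in the expansion have real part $\alpha$ and log-degree at most $\tilde m_{\alpha}$ in the $p$-field case and at most $\tilde m_{\alpha}+\delta_{\alpha}$ in the $\mathbb{R}$-field case.

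Granting this, the conclusion follows by extracting the leading behaviour. Truncating the asymptotic expansion just after the leading real part $\alpha$ yields
\[
E_{\Phi}(z;f/g)-\int_{K^{n}}\Phi(x)\,|dx|_{K}\ =\ \sum_{\operatorname{Re}\gamma=\alpha}\ \sum_{m} e_{\gamma,m}(ac\,z)\,|z|_{K}^{\gamma}\,(\ln|z|_{K})^{m-1}\ +\ o\!\left(|z|_{K}^{\alpha}\right)
\]
as $|z|_{K}\to 0$. Taking absolute values, using that $|z|_{K}^{\gamma}$ has modulus $|z|_{K}^{\alpha}$ for the finitely many $\gamma$ with $\operatorname{Re}\gamma=\alpha$, bounding each $|e_{\gamma,m}|$ by a constant on $\{|u|_{K}=1\}$, and dominating $|\ln|z|_{K}|^{m-1}$ by $|\ln|z|_{K}|^{\tilde m_{\alpha}+\delta_{\alpha}-1}$ for $|z|_{K}$ small (recall $m\leq\tilde m_{\alpha}+\delta_{\alpha}$), one bounds the explicit sum by $C'|z|_{K}^{\alpha}\,|\ln|z|_{K}|^{\tilde m_{\alpha}+\delta_{\alpha}-1}$; the remainder $o(|z|_{K}^{\alpha})$ is absorbed into the same bound after enlarging the constant, and in the $p$-field case there is no remainder at all for $|z|_{K}$ small enough. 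Choosing $C$ slightly larger than $C'$ gives the inequalities of parts~(1) and~(2), the latter being the special case $\delta_{\alpha}=0$.

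The step requiring real care is confirming that $\alpha$ is indeed the dominant exponent in the $\mathbb{R}$-field expansion of Theorem~\ref{Esmallz}: there the summation also ranges over all $\gamma\in d_{K}\mathbb{Z}_{>0}$ (arising from $E_{\infty}$ through the generalized Fourier transform, together with the Taylor-type contributions of the integrals of type $E_{1}$ and $E_{3}$), so one must make sure these extra exponents do not undercut $\alpha$, and, when $\alpha\in d_{K}\mathbb{Z}_{>0}$, carry along the additional log-power recorded by $\delta_{\alpha}$ — which is exactly the effect of the vanishing of the Gamma-factor $w_{\chi}$ exploited in the proof of Proposition~\ref{PropEinfty}. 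Everything else is routine bookkeeping: the coefficient functions are uniformly bounded and the tail control is built into the notion of asymptotic expansion, while for $p$-fields the expansion in Theorem~\ref{Esmallz-p} is already a finite sum valid for all small $|z|_{K}$ — by Proposition~\ref{Prop2p} the only "extra" exponent there is $0$, which has been removed by subtracting the constant $\int_{K^{n}}\Phi$ — so the estimate is immediate in that case.
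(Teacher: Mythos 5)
Your overall route --- read the estimate off the small-$|z|_K$ expansions of Theorems \ref{Esmallz} and \ref{Esmallz-p}, using Theorems \ref{Theorem1}, \ref{Theorem2} and \ref{Theorem3} to identify $\alpha$ as the smallest positive (real part of a) pole and to see that it actually occurs --- is exactly the paper's one-line derivation, and for part (2) your argument is complete: over a $p$-field the exponents occurring in Theorem \ref{Esmallz-p} are precisely the poles of positive real part, all of real part $\geq\alpha$ and log-degree at most $\tilde m_{\alpha}$, the sum is finite and exact for small $|z|_K$, and the coefficients are bounded on $R_K^\times$.

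For part (1) there is a genuine gap, and you put your finger on it yourself without closing it. The inference ``every positive pole has real part $\geq\alpha$, hence every exponent $\gamma$ in the expansion satisfies $\operatorname{Re}\gamma\geq\alpha$'' is a non sequitur over $\mathbb{R}$-fields: by Theorem \ref{Esmallz} (via Proposition \ref{PropEinfty} together with the Taylor expansions of the $E_1$- and $E_3$-type contributions, which Lemma \ref{Lemma4} only controls to order zero) the exponent set also contains all of $d_K\mathbb{Z}_{>0}$, whose smallest element is $1/[K:\mathbb{R}]$. When $\alpha>1/[K:\mathbb{R}]$ these extra exponents do undercut $\alpha$, and their coefficients are not zero in general: the first such coefficient is essentially $\frac{d}{dz}E_{\Phi}(z;f/g)\big|_{z=0}$, which by dominated convergence equals a nonzero multiple of $\int\Phi\cdot(f/g)\,|dx|_K$ whenever $f/g$ is integrable on $\supp\Phi$ (true as soon as $\alpha>1$), and this is nonzero for generic positive $\Phi$. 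Concretely, for $K=\mathbb{R}$ and $f/g=x/(x^{2}+y^{2})$ one has $\alpha=2$, yet
\[
E_{\Phi}(z;f/g)-\int\Phi\,|dx|_K \;=\; 2\pi i z\int\Phi(x,y)\,\frac{x}{x^{2}+y^{2}}\,dx\,dy\;+\;o(|z|),
\]
which is of exact order $|z|$ for suitable positive $\Phi$ with $\Phi(0,0)>0$, whereas the asserted bound is $o(|z|)$. So the step you flag as ``requiring real care'' is not bookkeeping: it cannot be carried out, and what actually follows from Theorem \ref{Esmallz} is the estimate with $\alpha$ replaced by $\min\big(\alpha,1/[K:\mathbb{R}]\big)$ (with the corresponding log-power), or the estimate as printed under the extra hypothesis $\alpha\leq 1/[K:\mathbb{R}]$. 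The paper's own proof is the same one-line appeal to Theorems \ref{Theorem3} and \ref{Esmallz} and glosses over the same point, so your write-up faithfully reproduces the intended argument; but as it stands the $\mathbb{R}$-field case is not proved, and the missing step is exactly the one you deferred.
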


Here in general $\alpha$ and $\tilde m_{\alpha}$ must be interpreted
in the sense of Definitions \ref{defalphabeta}(1) and
\ref{deforderalphabeta}(1), that is, depending on $\Phi$. Then also
$\delta_{\alpha}$ and $C$ depend on $\Phi$. Whenever $T$ is finite,
in particular when $f$ and $g$ are polynomials, one can consider
$\alpha$ and $\tilde m_{\alpha}$ in the sense of Definitions
\ref{defalphabeta}(2) and \ref{deforderalphabeta}(2). Then $C$,
$\alpha$, $\delta_{\alpha}$ and $\tilde m_{\alpha}$ are independent
of $\Phi$.

A similar remark applies to the next theorem.

 \begin{theorem}\label{estimateinfty} We take $\frac{f}{g}:U\to K$ and $\Phi$ as in Section \ref{section
preliminaries}. Denote for $c\in \mathcal {V}$ the corresponding
number $\beta$, associated in Definition \ref{defalphabeta} to $f/g
-c$, by $\beta_c$. We define $\beta_{\mathcal {V}}:= \max_{c\in
\mathcal {V}} \beta_c$ and its associated order $\tilde
m_{\beta_{\mathcal {V}}}$ as in Definition \ref{deforderalphabeta}.

Assume that for some $\omega$ and some $c\in \mathcal {V}$, the zeta
function $Z_{\Phi}\left( \omega;f/g-c\right)$ has a negative pole or
a pole with negative real part, according as $K$ is an
$\mathbb{R}$-field or a $p$-field, for some $\omega$. Then there
exists a positive constant $C$ such that
\[
\left\vert E_{\Phi}\left( z;f/g\right) \right\vert \leq C\left\vert
z\right\vert _{K}^{_{\beta_{\mathcal {V}}}}\big|\ln \left\vert
z\right\vert _{K}\big|^{\tilde m_{\beta_{\mathcal {V}}}-1}\qquad
\text{ as }\left\vert z\right\vert _{K} \rightarrow \infty.
\]
\end{theorem}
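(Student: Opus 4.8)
The plan is to derive Theorem \ref{estimateinfty} directly from the asymptotic expansions in Theorems \ref{thmEinfty} and \ref{thmEinfty-p}, combined with the identification of $\beta_c$ as the largest negative real part of a pole of $Z_{\Phi}(\omega;f/g-c)$ provided by Theorem \ref{Theorem3}. First I would recall that, by Theorems \ref{thmEinfty} and \ref{thmEinfty-p}, the oscillatory integral $E_{\Phi}(z;f/g)$ has, as $|z|_K\to\infty$, an asymptotic expansion (respectively an exact expression for $|z|_K$ large enough, in the $p$-adic case) of the shape
\[
\sum_{c\in \mathcal {V}} \sum_{\gamma_c<0} \sum_{m=1}^{m_{\gamma_c}} e_{\gamma_c,m,c}\!\left(ac\,z\right)\Psi(c\cdot z)\,|z|_K^{\gamma_c}\,(\ln|z|_K)^{m-1},
\]
where $\gamma_c$ runs over the poles (with negative real part) of $Z_{\Phi}(\omega;f/g-c)$ for all $\omega$. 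Since $|\Psi(c\cdot z)|=1$ and each $e_{\gamma_c,m,c}$ is a smooth (respectively locally constant) function on the compact set $\{|u|_K=1\}$, hence bounded, each individual term is $O\big(|z|_K^{\operatorname{Re}(\gamma_c)}(\ln|z|_K)^{m-1}\big)$.

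Next I would invoke Theorem \ref{Theorem3}: for each $c\in\mathcal{V}$, the quantity $\beta_c=\beta_\Phi(\sigma,D_K)$ associated to $f/g-c$ equals $\max_i\{-v_i/N_i\}$ over the relevant components, and it is genuinely a pole of $Z_{\Phi}(s;f/g-c)$, so in particular it is the largest real part among all negative poles $\gamma_c$. Thus every exponent $\operatorname{Re}(\gamma_c)$ appearing in the expansion satisfies $\operatorname{Re}(\gamma_c)\le\beta_c\le\beta_{\mathcal{V}}$. For the logarithmic factors, the order $m$ in a term with $\operatorname{Re}(\gamma_c)=\beta_{\mathcal{V}}$ is at most $\tilde m_{\beta_{\mathcal{V}}}$ by Definition \ref{deforderalphabeta}, while terms with strictly smaller exponent contribute a lower-order error; combining these and using that the expansion has only finitely many terms of each exponent-type (in the $\mathbb{R}$-field case the asymptotic expansion is genuine, so the tail beyond any fixed exponent is negligible; in the $p$-field case the expression is a finite sum), one obtains
\[
\big|E_{\Phi}(z;f/g)\big|\le C\,|z|_K^{\beta_{\mathcal{V}}}\,\big|\ln|z|_K\big|^{\tilde m_{\beta_{\mathcal{V}}}-1}
\]
as $|z|_K\to\infty$, for a suitable positive constant $C$.

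The hypothesis that $Z_{\Phi}(\omega;f/g-c)$ has a negative pole (resp. a pole with negative real part) for some $c\in\mathcal{V}$ and some $\omega$ guarantees that $\beta_{\mathcal{V}}\neq-\infty$, so the statement is non-vacuous; otherwise the expansion would be empty and $E_{\Phi}(z;f/g)$ would decay faster than any power of $|z|_K$. One should note that for the $\mathbb{R}$-field case the estimate requires controlling the remainder of the asymptotic expansion: for any $\varepsilon>0$ one truncates the expansion beyond exponent $\beta_{\mathcal{V}}-\varepsilon$ and the remainder is $O(|z|_K^{\beta_{\mathcal{V}}-\varepsilon})$, which is absorbed into the main term; this is exactly the standard way asymptotic expansions yield estimates, so it is routine. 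The main obstacle—though it is really a bookkeeping matter rather than a genuine difficulty—is to make sure that $\beta_{\mathcal{V}}$, defined via a single chosen resolution of $D_K$ for each $f/g-c$, is indeed resolution-independent and equals the largest real part of a negative pole; but this is precisely the content of Theorem \ref{Theorem3}, so no further work is needed. Hence the proof is a direct consequence of the cited theorems.
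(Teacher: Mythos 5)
Your proposal is correct and follows essentially the same route as the paper, which states that the estimate follows directly from the expansions in Theorems \ref{thmEinfty} and \ref{thmEinfty-p} together with Theorem \ref{Theorem3}; you have merely filled in the routine details (boundedness of the coefficient functions and of $\Psi(c\cdot z)$, identification of $\beta_c$ as the largest negative real part of a pole, and truncation of the asymptotic expansion in the $\mathbb{R}$-field case).
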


\medskip
\begin{remark} \rm
Let $K$ be a $p$-field. In the classical case, i.e., with $g=1$ and
with $\Phi$ the characteristic function of $R_K^n$, the oscillatory
integral $E_{\Phi}\left( z;f/g\right)$ becomes a traditional
exponential sum. This fact is not true for a general $g$. Consider
however the particular case of a non-degenerate Laurent polynomial
$h(x) = \frac{f(x)}{x^m}$, where $f\in R_K [x_1,\dots,x_n]\setminus
R_K$ and $x^m= \prod_{i=1}^n x_i^{m_i}$ (all $m_i\in
\mathbb{Z}_{\geq 0}$), and the associated exponential sum
$$
S_{\ell,u}(h) := q^{-\ell n} \sum_{\bar x \in
\left((R_K/P_K^\ell)^\times\right)^n} \Psi\big(
u\mathfrak{p}^{-\ell} h(\bar x)\big) ,
$$
where $\ell \in \mathbb{Z}_{>0}$ and $u\in R_K^\times$. Then we have
\begin{equation}
S_{\ell,u}(h) = \int_{(R_K^\times)^n} \Psi\big(
u\mathfrak{p}^{-\ell} h(x)\big) |dx|_K . \label{lastformula}
\end{equation}
Indeed, decompose $(R_K^\times)^n$ as
$$(R_K^\times)^n = \coprod_{\bar x \in \left((R_K/P_K^\ell)^\times\right)^n}
\tilde x + \mathfrak{p}^\ell R_K^n ,
$$
where $\tilde x$ is a representative of $\bar x$. Then on each piece
of the decomposition we have for all $y\in R_K^n$ that $h(\tilde x +
\mathfrak{p}^\ell y)$ is of the form
$$
\frac{f(\tilde x + \mathfrak{p}^\ell y)}{(\tilde x +
\mathfrak{p}^\ell y)^m} = \frac{f(\tilde x) + \mathfrak{p}^\ell
A}{\tilde x^m + \mathfrak{p}^\ell B} = \frac{f(\tilde x) +
\mathfrak{p}^\ell A}{\tilde x^m (1+ \mathfrak{p}^\ell B')} =
\frac{f(\tilde x) + \mathfrak{p}^\ell C}{\tilde x^m}=
 \frac{f(\tilde x)}{\tilde x^m} + \mathfrak{p}^\ell D,
$$
where $A,B,B',C,D \in R_K$, and the second and last equalities are
valid because the components of $\tilde x$ are units. Hence
$h(\tilde x + \mathfrak{p}^\ell y)$ is of the form $h(\tilde x) +
\mathfrak{p}^\ell D$. This implies (\ref{lastformula}).

We can apply Theorem \ref{estimateinfty} to the meromorphic function
$h$ on $R_K^n$ and the characteristic function of $(R_K^\times)^n$
to obtain the estimate
\begin{equation}
|S_{\ell,u}(h)| \leq C q^{\ell\beta_{\mathcal {V}}} {\ell}^{\tilde
m_{\beta_{\mathcal {V}}} -1} \qquad
 \text{ for sufficiently large } \ell , \label{lastestimate}
\end{equation}
where $C$ is a positive constant, and $\beta_{\mathcal {V}}$ and
$m_{\beta_{\mathcal {V}}}$ are as in Theorem \ref{estimateinfty}.
Note however that this result can already be obtained using Igusa's
classical method for estimating
 exponential sums
$\operatorname{mod}p^{\ell}$, due to the fact that $h$ is a regular
function on $(R_K^\times)^n$.

The estimation (\ref{lastestimate}) can be considered as a $p$-adic
(or $\operatorname{mod}p^{\ell}$) counterpart of the estimations for
exponential sums attached to Laurent polynomials over finite fields,
due to Denef and Loeser \cite{D-L-1}.
\end{remark}

\end{document}